\documentclass{elsarticle}
\usepackage{mathrsfs,amsthm,mathtools,graphicx,epstopdf,stmaryrd,subfigure,float,amsmath,bm,amsfonts,amssymb,natbib,dsfont,amsfonts,tikz,fancyhdr,multirow,hyperref,caption,color}
\usepackage[top=1.25in, bottom=1.0in, left=1.0in, right=1.0in]{geometry}
\usepackage{lineno}

\numberwithin{equation}{section}
\newtheorem{thm}{Theorem}[section]

\newtheorem{rmk}{Remark}[section]
\newtheorem{lem}{Lemma}[section]
\newtheorem{ass}{Assumption}[section]

\newcommand{\jp}[1]{\ensuremath{\displaystyle[\![#1]\!]}}
\newcommand{\avg}[1]{\ensuremath{\displaystyle\{#1\}}}
\newcommand{\NNorm}[1]{{\left\vert\kern-0.25ex\left\vert\kern-0.25ex\left\vert #1 \right\vert\kern-0.25ex\right\vert\kern-0.25ex\right\vert}}
\newcommand{\QK}{\mathbf{Q}_{K}^{1}}
\newcommand{\Idiv}{\mathcal I_h^{\rm div}}
\newcommand{\Pzero}{P_h^0}
\newcommand{\alphaG}{\alpha_{\Gamma}}
\newcommand{\Gdiv}{G_{\operatorname{div}}}
\newcommand{\Gker}{G_{\operatorname{ker}}}
\newcommand{\Mgam}{\mathcal M_{\Gamma}}

\newcommand{\Pigam}{\Pi_{\Gamma}^{0}}
\newcommand{\PF}{P_{\mathcal F}}

\begin{document}

	\begin{frontmatter}
		\title{A Mixed Extended Virtual Element Method for Elliptic Interface Problems on Polygonal Meshes}
		
		\address[a]{School of Science, Jinling Institute of Technology, Nanjing 211169, China}
		\address[b]{Ministry of Education Key Laboratory for NSLSCS, Jiangsu Collaborative Innovation Center of Biomedical Functional Materials, School of Mathematical Sciences, Nanjing Normal University, Nanjing 210023, China} 
	\address[c]{School of Mathematical Sciences, Jiangsu Second Normal University, Nanjing 211200, China}
	\cortext[mycorrespondingauthor]{Corresponding author}
		\author[a]{Xianyan Zheng}
	\ead{xyZheng@jit.edu.cn}
		\author[b,c]{Jinru Chen\corref{mycorrespondingauthor}}
	\ead{jrchen@njnu.edu.cn}
		\author[b]{Feng Wang}
	\ead{fwang@ninu.edu.cn}
	
	\begin{abstract}
		We propose a lowest-order \(H(\operatorname{div})\)-conforming mixed extended virtual element method for elliptic interface problems on interface-unfitted polygonal meshes. The flux and pressure are approximated by subdomain-wise extended \(H(\operatorname{div})\)-VEM spaces and by piecewise constants, respectively. On cut elements, the computable polynomial projection is defined on the whole background element and then restricted to the two subdomains. Compared with BDM-type polynomial spaces, the mixed VEM space contains a non-polynomial component, which gives rise to additional
		consistency terms on cut elements. To control these terms, we use an enhanced kernel stabilization on cut elements and an interface normal-flux average in the mixed coupling. A corrected interface-flux penalty and a local divergence
		ghost penalty are added to obtain cut-position-independent stability without using a volume div-div augmentation. We prove continuity, a discrete inf-sup condition, and an optimal first-order error estimate in a mesh-dependent norm. The constants are independent of the mesh size and of the position of the interface relative to the background mesh, but may depend on the coefficient contrast.
	\end{abstract}
	
	\begin{keyword}
		Mixed virtual element method \sep elliptic interface problem \sep unfitted polygonal mesh \sep $H(\operatorname{div})$-conforming method \sep ghost penalty
	\end{keyword}
\end{frontmatter}

\section{Introduction}

Interface problems arise in many models with discontinuous material parameters, including heat conduction in composite media, flow in heterogeneous porous media, and transmission problems in mechanics and electromagnetics. Their numerical approximation is difficult because the solution is usually only piecewise smooth and suitable transmission conditions have to be imposed on the solution and on the normal flux across the interface. Interface-fitted finite element methods can recover optimal accuracy under suitable regularity assumptions, but fitted mesh generation may be expensive when the interface is curved, complicated, or moving; see, for instance, the classical convergence analysis in \cite{ChenZou1998}.

Unfitted methods avoid the generation of interface-fitted meshes. The extended finite element method was originally introduced for cracks and discontinuities \cite{BelytschkoBlack1999,MoesDolbowBelytschko1999} and later developed into a broad class of generalized finite element methods \cite{FriesBelytschko2010}. Nitsche's method \cite{Nitsche1971} provides a natural weak enforcement of boundary and interface conditions, and the Nitsche-XFEM of Hansbo and Hansbo \cite{HansboHansbo2002} is one of the standard unfitted approaches for elliptic interface problems. Related unfitted and immersed methods include immersed interface and immersed finite element methods \cite{Li1998,LiLinWu2003}, discontinuous Galerkin methods for interface problems \cite{CaiYeZhang2011}, and CutFEM methods with ghost penalties \cite{Burman2010Ghost,BurmanEtAl2015CutFEM,HansboLarsonZahedi2014,KirchhartGrossReusken2016}.

In the formulation considered here, we use an \(H(\operatorname{div})\)-conforming approximation of the flux. On cut
elements, the interface term involves the normal component of the test flux on \(\Gamma_K\). Since virtual element functions are not known explicitly inside the element, this normal flux information has to be recovered from the degrees of freedom, and the resulting coupling with the piecewise constant pressure has to be stable. For elliptic interface problems, an extended mixed finite element method based on the BDM\(_1\)--\(P_0\) pair was developed in \cite{CAO2022148}; the method uses suitable stabilizations to obtain an inf-sup condition and optimal convergence independently of the position of the interface relative to the background triangulation. Related unfitted mixed and coupled formulations can be found in \cite{DAngeloScotti2012,Ji2022RT,CaoChen2022DarcyStokes}.

The virtual element method (VEM) provides a flexible framework for polygonal and polyhedral meshes. Its local basis functions are not required to be known explicitly; instead, the method is built from degrees of freedom, polynomial projections, and stabilization terms. The basic principles of the conforming VEM were introduced in \cite{BeiraoBrezziCangiani2013}, and further developments include general second-order elliptic problems \cite{BeiraoBrezziMariniRusso2016SecondOrder}, stability analyses of VEM stabilizations \cite{BeiraoLovadinaRusso2017}, VEMs on meshes with small edges or faces \cite{BrennerSung2018}, and nonconforming virtual elements \cite{AyusoLipnikovManzini2016}. Mixed VEMs were introduced as polygonal counterparts of classical mixed finite elements \cite{BrezziFalkMarini2014}, building on the theory of mixed finite elements \cite{BrezziDouglasMarini1985,BrezziFortin1991}. A systematic construction of \(H(\operatorname{div})\)- and \(H(\operatorname{curl})\)-conforming virtual element spaces, together with the computability of the associated \(L^2\)-projections, was presented in \cite{2016H(div)H(curl)}.

Several extended or immersed VEMs have been proposed for interface-unfitted problems. Extended VEMs based on enrichment
techniques were proposed for singularities and discontinuities \cite{BenvenutiChiozziManzini2019}, while
conforming, nonconforming, and high-order extended VEMs have been studied for elliptic interface problems on interface-unfitted meshes \cite{ZhengChenWang2024XVEM,ZhengChenWang2024NCXVEM}. Related immersed
VEM constructions have also been proposed  \cite{CaoChenGuoLin2022IVEM}. The precise additional terms depend on the chosen space and formulation: for example, nonconforming
extended VEMs require special edge terms and stabilization terms, whereas high-order extended VEMs require additional consistency and stabilization treatments.


The present work develops a mixed extended VEM for elliptic interface problems on interface-unfitted polygonal meshes. The flux is approximated in subdomain-wise extended \(H(\operatorname{div})\)-conforming VEM spaces, while the pressure is approximated by piecewise constants. The polynomial projection of a virtual velocity is computed on the whole background element and is then restricted to the two cut subdomains. This convention keeps the method computable on polygonal meshes, but it creates additional consistency terms in the mixed formulation.

A key difference from BDM-based extended mixed finite elements is the presence of a non-polynomial component in the \(H(\operatorname{div})\)-VEM velocity space. In BDM spaces the polynomial projection is the identity on the discrete velocity. In VEM, however,
\[
\mathbf v_h=\mathbf Q_K^1\mathbf v_h+(I-\mathbf Q_K^1)\mathbf v_h,
\]
and the kernel component \((I-\mathbf Q_K^1)\mathbf v_h\) is not explicitly known. On cut elements this component appears in the consistency error because the computable projection is defined on the whole background element whereas the integrals are taken on the cut subdomains. These terms are absent in BDM-type polynomial spaces and require a VEM-specific treatment.

The method proposed in this paper is designed to address these issues without adding a volume div-div augmentation. The pressure stabilizations \(J_1\) and \(J_2\) are used in the inf-sup proof: \(J_1\) controls jumps of the piecewise constant pressure across transmission edges associated with the same subdomain, while \(J_2\) controls the pressure jump across the interface. The local divergence ghost penalty gives an extended-domain estimate for the discrete divergence. An enhanced stabilization is used on cut elements to control the kernel component \((I-\mathbf Q_K^1)\mathbf v_h\).

For the interface term in the mixed coupling, only the average of the normal flux over each interface segment is needed, because the pressure is piecewise constant on each side of the segment. We therefore introduce an interface normal-flux average, denoted by \(\mathcal M_\Gamma\mathbf v_h\). This quantity is a computable constant on each interface segment whose integral equals the integral of the true normal flux over that segment. For stability and consistency estimates, we also control the zero-mean part of the projected polynomial normal flux. This is done through a corrected interface flux \(\mathcal F_\Gamma\) and the corresponding penalty term.

We prove the continuity of the discrete bilinear form, a discrete inf-sup condition, and an optimal first-order error estimate in a mesh-dependent norm. The proof separates the roles of the different terms. The pressure stabilizations \(J_1\) and \(J_2\) are used in the inf-sup proof: \(J_1\) controls jumps of the piecewise constant pressure across transmission edges associated with the same subdomain, while \(J_2\) controls the pressure jump across the interface. The divergence ghost penalty \(G_{\rm div}\) gives an extended-domain estimate for the discrete divergence: the divergence norm on the subdomain-wise extended domains is bounded by the corresponding physical-domain norm together with \(G_{\rm div}\). The enhanced stabilization and the corrected interface-flux penalty control the consistency terms caused by the kernel component of the VEM velocity on cut elements. The constants in the stability and error estimates are independent of the mesh size and of the position of the interface relative to the background mesh, but they may depend on the contrast of the diffusion coefficient.

The rest of the paper is organized as follows. Section~2 introduces the model problem and its mixed weak formulation. Section~3 presents the mesh notation, the local \(H(\operatorname{div})\) virtual element space, and the discrete formulation. Section~4 proves the auxiliary estimates, continuity, pressure lifting, and the discrete inf-sup condition. Section~5 establishes the interpolation estimates and the final error bound. Section~6 reports numerical experiments, and Section~7 gives concluding remarks.

\section{The model problem and weak formulation}

Consider the following elliptic interface problem
\begin{equation}\label{model problem}
	\left\{
	\begin{aligned}
		-\operatorname{div}(\alpha \nabla p) &= f \quad \operatorname{in} ~\Omega_1 \cup \Omega_2,\\
		\jp{ \alpha \nabla p \cdot \mathbf{n} }  &= 0 \quad \operatorname{on}~\Gamma,\\
		\jp{ p } &= 0 \quad \operatorname{on}~\Gamma,\\
		p &= 0 \quad \operatorname{on}~\partial \Omega.
	\end{aligned}
	\right.
\end{equation}
where $\jp{p} = p_1|_{\Gamma} - p_2|_{\Gamma}$, $p_i=p|_{\Omega_i}$, $i=1,2$, and $\mathbf n$ is the unit normal vector on $\Gamma$ pointing from $\Omega_1$ to $\Omega_2$.

Set $\mathbf{u} = \alpha \nabla p$. Then
\begin{equation}\label{mixed form of model problem}
	\left\{
	\begin{aligned}
		-\operatorname{div}\mathbf{u} &= f \qquad~\operatorname{in} ~\Omega_1 \cup \Omega_2,\\
		\mathbf{u} &= \alpha \nabla p \quad \operatorname{in} ~\Omega_1 \cup \Omega_2,\\
		\jp{ \mathbf{u} \cdot \mathbf{n} }  &= 0 \qquad~\operatorname{on}~\Gamma,\\
		\jp{ p } &= 0 \qquad~\operatorname{on}~\Gamma,\\
		p &= 0 \qquad~\operatorname{on}~\partial \Omega.
	\end{aligned}
	\right.
\end{equation}
The weak formulation of \eqref{mixed form of model problem} is to find
$(\mathbf{u},p) \in \mathbf{V} \times Q = H(\operatorname{div};\Omega) \times L^2(\Omega)$ such that
\begin{equation}\label{weak formulation of mixed problem}
	\left\{
	\begin{aligned}
		&(\alpha^{-1} \mathbf{u},\mathbf{v})_{\Omega} + (p,\operatorname{div} \mathbf{v})_{\Omega} = 0 \quad \forall \mathbf{v} \in \mathbf{V},\\
		&-(q,\operatorname{div} \mathbf{u})_{\Omega} = (f,q)_{\Omega} \qquad \forall q \in Q.
	\end{aligned}
	\right.
\end{equation}
It is well-known that \eqref{weak formulation of mixed problem} is well-posed.

\section{Discrete space and bilinear form}

\subsection{Local discrete space}
For $k \geq 0$, let $\mathbb{P}_k(D)$ be the set of polynomials on $D$ of degree at most $k$, with $D=e$ or $K$. Define
\begin{equation*}
	\mathcal{G}_1(K) := \nabla (\mathbb{P}_2(K)) \subseteq [\mathbb{P}_1(K)]^2,
	\qquad
	\mathcal{G}_1(K)^{\perp}\subseteq [\mathbb{P}_1(K)]^2
\end{equation*}
as the $L^2(K)$-orthogonal complement of $\mathcal G_1(K)$.
On each element $K \in \mathcal{T}_h$, define
\begin{equation}\label{local VEM space}
	\mathbf{V}_h(K) = \{\mathbf{v} \in H(\operatorname{div};K) \cap H(\operatorname{rot};K):
	\mathbf{v} \cdot \mathbf{n}|_e \in \mathbb{P}_1(e)~ \forall e \subset \partial K,
	~ \operatorname{div} \mathbf{v} \in \mathbb{P}_0(K),
	~ \operatorname{rot} \mathbf{v} \in \mathbb{P}_0(K)\}.
\end{equation}
We use scaled degrees of freedom. Let $\{m_{e,0},m_{e,1}\}$ be a scaled basis of $\mathbb P_1(e)$ and let $\mathbf g_K^\perp$ be a scaled generator of $\mathcal G_1(K)^\perp$. For $\mathbf v\in\mathbf V_h(K)$, set
\begin{equation}\label{scaled dofs}
	\widehat\chi_{e,\ell}(\mathbf v)=\frac{1}{|e|}\int_e (\mathbf v\cdot \mathbf n)m_{e,\ell}\,ds,
	\qquad \ell=0,1,\quad e\subset \partial K,
\end{equation}
and
\begin{equation}\label{scaled internal dof}
	\widehat\chi_K(\mathbf v)=\frac{1}{|K|}\int_K \mathbf v\cdot \mathbf g_K^\perp\,dx.
\end{equation}
We enumerate these scaled degrees of freedom by $\widehat\chi_r$, $r=1,\ldots,N_K$, where $N_K=2n_K+1$ and $n_K$ is the number of edges of $K$. They are scaled versions of the standard moment degrees of freedom for the lowest-order $H(\operatorname{div})$ VEM space and are unisolvent; see \cite{2016H(div)H(curl)}.

\subsection{Global discrete space and averages on cut cells}
Let
\begin{equation*}
	\mathcal T_h^\Gamma=\{K\in\mathcal T_h:K\cap\Gamma\ne\emptyset\},
	\qquad K_i=K\cap\Omega_i,
	\qquad
	\mathcal T_{h,i}^{+}=\{K\in\mathcal T_h:K_i\ne\emptyset\}.
\end{equation*}
Set $\Omega_{h,i}=\cup_{K\in\mathcal T_{h,i}^{+}}K$, $i=1,2$, and define
\begin{equation*}
	\begin{aligned}
		\mathbf{V}_{h,i} &= \{\mathbf{v}_h \in H(\operatorname{div};\Omega_{h,i}): \mathbf{v}_h|_K \in \mathbf{V}_h(K)~ \forall K \in \mathcal{T}_{h,i}^{+} \},\\
		Q_{h,i} &= \{q_h \in L^2(\Omega_{h,i}): q_h|_K \in \mathbb{P}_0(K)~ \forall K \in \mathcal{T}_{h,i}^{+}\},\\
		\mathbf{V}_h &= \mathbf{V}_{h,1} \times \mathbf{V}_{h,2}, \qquad Q_h = Q_{h,1} \times Q_{h,2}.
	\end{aligned}
\end{equation*}

For the subdomain-wise ghost penalties, we use the set of transmission edges
\begin{equation}\label{ghost edge set}
	\mathcal E_{\Gamma,i}=\{e=\partial K_l\cap\partial K_r:K_l,K_r\in\mathcal T_{h,i}^{+},~ e\subset\Omega_{h,i},~ K_l\hbox{ or }K_r\in\mathcal T_h^\Gamma\},
\end{equation}
possibly enlarged by a uniformly bounded number of neighboring layers when needed. This is the standard transmission-edge set used to obtain the extended domain estimates below.

\begin{ass}[Mesh and interface geometry]\label{ass unfitted geometry}
	The background mesh and the interface satisfy the following standard assumptions.
	\begin{enumerate}
		\item[(A1)] Each element \(K\in\mathcal T_h\) is a simple polygon.
		
		\item[(A2)] The polygonal mesh is shape regular in the standard VEM sense: each element \(K\) is star-shaped with respect to a disk whose diameter is comparable to \(h_K\), and the number of edges of each element is uniformly bounded.
		
		\item[(A3)] For sufficiently small \(h\), the interface \(\Gamma\) intersects the boundary of each cut element in exactly two points, and each open edge is intersected at most once.
		
		\item[(A4)] Let \(\Gamma_{K,h}\) be the straight line segment joining the two intersection points of \(\Gamma\) with \(\partial K\). In a local coordinate system such that
		\[
		\Gamma_{K,h}=\{(\xi,\eta):0<\xi<|\Gamma_{K,h}|,\ \eta=0\},
		\]
		the curve segment \(\Gamma_K=\Gamma\cap K\) can be represented as
		\[
		\Gamma_K=\{(\xi,\eta):0<\xi<|\Gamma_{K,h}|,\ \eta=\delta_K(\xi)\},
		\]
		where \(\delta_K\) is a smooth function.
		
		\item[(A5)] For each \(i=1,2\), every cut element in the extended mesh \(\mathcal T_{h,i}^{+}\) can be connected, through a uniformly bounded number of transmission edges in \(\mathcal E_{\Gamma,i}\), to an element
		\(K'\in\mathcal T_{h,i}^{+}\) satisfying
		\[
		|K'\cap\Omega_i|\ge c_0|K'|.
		\]
		The lengths of these chains and their overlaps are uniformly bounded.
	\end{enumerate}
	Assumptions (A1)--(A2) are the standard shape-regularity conditions used for polygonal VEM spaces. Assumptions (A3)--(A4) follow the usual local geometry assumptions in interface-unfitted methods. Assumption (A5) is the local patch condition used in ghost-penalty arguments. It ensures that quantities defined on the subdomain-wise extended meshes can be controlled by their physical-domain contributions together with the corresponding ghost penalties. We use the resulting extension estimates for piecewise constants and for the discrete divergence below.
\end{ass}

For $K\in\mathcal T_h^\Gamma$ we use the harmonic Nitsche weights
\begin{equation}\label{harmonic weights}
	\omega_1=\frac{\alpha_2}{\alpha_1+\alpha_2},\qquad
	\omega_2=\frac{\alpha_1}{\alpha_1+\alpha_2},\qquad
	\alphaG=\frac{2\alpha_1\alpha_2}{\alpha_1+\alpha_2}.
\end{equation}
Thus $\alphaG^{-1}=\frac12(\alpha_1^{-1}+\alpha_2^{-1})$. On $\Gamma_K$ we use the weighted average
\begin{equation}\label{average definition}
	\avg{r}:=\omega_1 r_1+\omega_2 r_2.
\end{equation}
The jump is $\jp{r}=r_1-r_2$ for scalar quantities and
$\jp{\mathbf v\cdot\mathbf n}=\mathbf v_1\cdot\mathbf n-\mathbf v_2\cdot\mathbf n$ for normal fluxes.

\subsection{Projectors, extensions, and interpolation}
Let $\QK:L^2(K)^2\rightarrow [\mathbb P_1(K)]^2$ be the $L^2(K)$-projection:
\begin{equation}\label{definition QK}
	(\QK\mathbf v,\mathbf q)_K=(\mathbf v,\mathbf q)_K\qquad\forall \mathbf q\in [\mathbb P_1(K)]^2.
\end{equation}
For virtual functions this projection is computable from the degrees of freedom. On a cut element, $\QK\mathbf v$ is first computed on the whole background element $K$ and is then restricted to $K_i$ or $\Gamma_K$.

We use the following extension and interpolation operators in the analysis. For each subdomain $\Omega_i$, let
\[
E^i:H^m(\Omega_i)\to H^m(\Omega_{h,i}),\qquad m=1,2,
\]
be a linear extension operator satisfying
\begin{equation}\label{scalar extension bound}
	\|E^i\phi\|_{m,\Omega_{h,i}}\le C\|\phi\|_{m,\Omega_i},
	\qquad m=1,2.
\end{equation}
For vector fields we use an $H^1(\operatorname{div})$ extension $E^i_{\rm div}$ such that $E^i_{\rm div}\mathbf w=\mathbf w$ on $\Omega_i$ and
\begin{equation}\label{div extension bound}
	\|E^i_{\rm div}\mathbf w\|_{2,\Omega_{h,i}}
	+\|\operatorname{div}E^i_{\rm div}\mathbf w\|_{1,\Omega_{h,i}}
	\le C\left(
	\|\mathbf w\|_{2,\Omega_i}
	+
	\|\operatorname{div}\mathbf w\|_{1,\Omega_i}
	\right).
\end{equation}
The constants in \eqref{scalar extension bound}--\eqref{div extension bound} are independent of the position of the interface relative to the mesh. Scalar Sobolev extensions are standard for smooth subdomains, and the $H^1(\operatorname{div})$ extension used here follows the construction used in unfitted mixed finite element analysis; see, for instance, \cite{CAO2022148,CaoChen2022DarcyStokes}.

Let $\Idiv$ denote the canonical $H(\operatorname{div})$ VEM interpolant, defined by matching the scaled degrees of freedom \eqref{scaled dofs}--\eqref{scaled internal dof}; see \cite{2016H(div)H(curl)} for the construction and commuting property. Let $\Pzero$ be the elementwise $L^2$ projection onto $\mathbb P_0$. The interpolants used in the error analysis are
\begin{equation}\label{early interpolation notation}
	\mathbf u_I|_K=\Idiv(E^i_{\rm div}\mathbf u_i)|_K,\qquad
	p_I|_K=\Pzero(E^ip_i)|_K,\qquad K\in\mathcal T_{h,i}^+.
\end{equation}
The commuting property of $\Idiv$ is proved in Lemma~\ref{lem commuting}.

\subsection{The discrete bilinear form}
The interface weight $\alphaG$ is the harmonic coefficient defined in \eqref{harmonic weights}.
We first introduce the correctly scaled VEM kernel stabilization. For scaled degrees of freedom define
\begin{equation}\label{D alpha definition}
	D_{\alpha,i}^K(\mathbf w,\mathbf z)
	:=\alpha_i^{-1}\sum_{r=1}^{N_K}\widehat\chi_r(\mathbf w)\widehat\chi_r(\mathbf z).
\end{equation}
The standard $L^2$-scale local VEM stabilization is
\begin{equation}\label{S alpha zero definition}
	S_{\alpha,i}^{K}(\mathbf w,\mathbf z)
	:=|K|D_{\alpha,i}^K(\mathbf w,\mathbf z)
	=\alpha_i^{-1}|K|\sum_{r=1}^{N_K}\widehat\chi_r(\mathbf w)\widehat\chi_r(\mathbf z).
\end{equation}
On interface elements we add an enhanced virtual-kernel stabilization
\begin{equation}\label{Gker definition}
	\Gker(\mathbf u_h,\mathbf v_h)
	:=\eta_{\operatorname{ker}}\sum_{i=1}^2\sum_{K\in\mathcal T_h^\Gamma}
	h_K D_{\alpha,i}^K((I-\QK)\mathbf u_{h,i},(I-\QK)\mathbf v_{h,i}),
	\qquad \eta_{\operatorname{ker}}>0.
\end{equation}
Equivalently, the kernel part on an interface element can be written with the combined coefficient
\begin{equation}\label{S alpha plus definition}
	S_{\alpha,i}^{K,+}(\mathbf w,\mathbf z)=
	\begin{cases}
		|K|D_{\alpha,i}^K(\mathbf w,\mathbf z), & K\notin\mathcal T_h^\Gamma,\\[0.4em]
		(|K|+\eta_{\operatorname{ker}}h_K)D_{\alpha,i}^K(\mathbf w,\mathbf z), & K\in\mathcal T_h^\Gamma.
	\end{cases}
\end{equation}
In the analysis below we keep $S_{\alpha,i}^K$ and $\Gker$ separated in order to display their different roles.

\begin{rmk}[Scaling of the stabilization]\label{rmk stabilization scale}
	The quantities $\widehat\chi_r(\mathbf v)$ have the size of the velocity itself. Therefore $\sum_r\widehat\chi_r(\mathbf w)^2$ is not an $L^2(K)$-scale quantity. The factor $|K|$ in \eqref{S alpha zero definition} is used so that $S_{\alpha,i}^K(\mathbf w,\mathbf w)$ is equivalent to $\alpha_i^{-1}\|\mathbf w\|_{0,K}^2$ on the kernel of $\mathbf Q_K^1$. On cut elements the additional factor $h_K$ in \eqref{Gker definition} is an extra stabilization used only to control cut element virtual kernels.
\end{rmk}

The cut-layer divergence ghost penalty is
\begin{equation}\label{Gdiv definition}
	\Gdiv(\mathbf u_h,\mathbf v_h)
	:=\eta_{\operatorname{div}}\sum_{i=1}^2\sum_{e\in\mathcal E_{\Gamma,i}}
	h_e\alpha_i^{-1}
	(\jp{\operatorname{div}\mathbf u_{h,i}},\jp{\operatorname{div}\mathbf v_{h,i}})_e,
	\qquad \eta_{\operatorname{div}}>0.
\end{equation}

We next define the interface normal-flux average. Let $\Pigam$ denote the $L^2(\Gamma_K)$-projection onto constants on each $\Gamma_K$. For $K\in\mathcal T_h^\Gamma$, define $\mathcal M_{\Gamma,K}^i(\mathbf v_h)\in\mathbb P_0(\Gamma_K)$ by
\begin{align}
	|\Gamma_K|\,\mathcal M_{\Gamma,K}^1(\mathbf v_h)
	&=\int_{K_1}\operatorname{div}\mathbf v_{h,1}\,dx
	-\int_{\partial K_1\cap\partial K}\mathbf v_{h,1}\cdot\mathbf n_{K_1}\,ds ,\label{Mgam 1}\\
	|\Gamma_K|\,\mathcal M_{\Gamma,K}^2(\mathbf v_h)
	&=\int_{\partial K_2\cap\partial K}\mathbf v_{h,2}\cdot\mathbf n_{K_2}\,ds
	-\int_{K_2}\operatorname{div}\mathbf v_{h,2}\,dx .\label{Mgam 2}
\end{align}
Here $\mathbf n_{K_i}$ is the outward unit normal to $K_i$ on $\partial K_i\cap\partial K$, and the interface normal $\mathbf n$ points from $\Omega_1$ to $\Omega_2$. Thus $\mathcal M_{\Gamma,K}^i(\mathbf v_h)$ is computable from the VEM edge moments and from the constant divergence. We set
\begin{equation}\label{Mgam jump definition}
	\jp{\Mgam\mathbf v_h}:=\mathcal M_{\Gamma,K}^1(\mathbf v_h)-\mathcal M_{\Gamma,K}^2(\mathbf v_h)
	\quad\hbox{on }\Gamma_K.
\end{equation}
The corrected interface flux used in the penalty is
\begin{equation}\label{Fgam definition}
	\mathcal F_{\Gamma,K}(\mathbf v_h)
	:=\jp{\Mgam\mathbf v_h}+(I-\Pigam)\jp{\QK\mathbf v_h\cdot\mathbf n}\quad\hbox{on }\Gamma_K.
\end{equation}
The corresponding Nitsche-type penalty is
\begin{equation}\label{PF definition}
	\PF(\mathbf u_h,\mathbf v_h)
	:=\gamma_{\mathcal F}\sum_{K\in\mathcal T_h^\Gamma}h_K^{-1}\alphaG^{-1}
	(\mathcal F_{\Gamma,K}(\mathbf u_h),\mathcal F_{\Gamma,K}(\mathbf v_h))_{\Gamma_K},
	\qquad \gamma_{\mathcal F}>0.
\end{equation}
The two components in \eqref{Fgam definition} are $L^2(\Gamma_K)$-orthogonal: the first is constant on $\Gamma_K$, while the second has integral zero on $\Gamma_K$. Thus $\PF$ controls the corrected interface flux appearing in \eqref{Fgam definition}.

Find $(\mathbf{u}_h,p_h) \in \mathbf{V}_h \times Q_h$ such that for any $(\mathbf{v}_h,q_h) \in \mathbf{V}_h \times Q_h$,
\begin{equation}\label{discrete problem}
	A_h(\mathbf{u}_h,p_h;\mathbf{v}_h,q_h) + \gamma_1 J_1(p_h,q_h)+\gamma_2J_2(p_h,q_h)
	= \sum_{i=1}^2\sum_{K\in\mathcal T_{h,i}^{+}}(f,q_{h,i})_{K_i}.
\end{equation}
Here
\begin{equation*}
	A_h(\mathbf{u}_h,p_h;\mathbf{v}_h,q_h)
	= a_h(\mathbf{u}_h,\mathbf{v}_h) + b_h(\mathbf{v}_h,p_h) - b_h(\mathbf{u}_h,q_h),
\end{equation*}
where
\begin{equation}\label{ah definition}
	\begin{aligned}
		a_h(\mathbf{u}_h,\mathbf{v}_h)
		&= \sum_{i=1}^2\sum_{K\in\mathcal T_{h,i}^{+}}
		\Bigl((\alpha_i^{-1}\QK\mathbf{u}_{h,i},\QK\mathbf{v}_{h,i})_{K_i}
		+ S_{\alpha,i}^{K,+}((I-\QK)\mathbf{u}_{h,i},(I-\QK)\mathbf{v}_{h,i})\Bigr) \\
		&\quad +\Gdiv(\mathbf u_h,\mathbf v_h)+\PF(\mathbf u_h,\mathbf v_h).
	\end{aligned}
\end{equation}
The term $\Gdiv$ supplies the cut-layer divergence extension. The enhanced stabilization $S_{\alpha,i}^{K,+}$ contains the standard $L^2$-scale VEM stabilization and, only on cut elements, the additional virtual-kernel control represented by $\Gker$.
The pressure-velocity coupling is
\begin{equation}\label{bh definition}
	b_h(\mathbf v_h,p_h)
	=\sum_{i=1}^2\sum_{K\in\mathcal T_{h,i}^{+}}
	(\operatorname{div}\mathbf v_{h,i},p_{h,i})_{K_i}
	-\sum_{K\in\mathcal T_h^\Gamma}(\jp{\Mgam\mathbf v_h},\avg{p_h})_{\Gamma_K}.
\end{equation}
Equivalently, the second term in \eqref{bh definition} may be written with $\mathcal F_{\Gamma,K}(\mathbf v_h)$, since $\avg{p_h}$ is constant on $\Gamma_K$ and the second component in \eqref{Fgam definition} has integral zero over $\Gamma_K$ and is therefore orthogonal to constants.
The ghost penalty for the scalar variable is
\begin{equation}\label{J1 definition}
	J_1(p_h,q_h)=\sum_{i=1}^2\sum_{e\in\mathcal E_{\Gamma,i}}h_e\alpha_i(\jp{p_{h,i}},\jp{q_{h,i}})_e,
\end{equation}
where $\mathcal E_{\Gamma,i}$ is the subdomain-wise transmission-edge set defined above.
The interface pressure-jump stabilization is
\begin{equation}\label{J2 definition}
	J_2(p_h,q_h)=\sum_{K\in\mathcal T_h^\Gamma}h_K\alphaG(\jp{p_h},\jp{q_h})_{\Gamma_K}.
\end{equation}

\begin{lem}[Consistency of the interface normal-flux average]\label{lem Mgam consistency}
	For every $\mathbf v_h\in\mathbf V_h$ and every $K\in\mathcal T_h^\Gamma$,
	\begin{equation}\label{Mgam exact moment}
		\int_{\Gamma_K}\mathcal M_{\Gamma,K}^i(\mathbf v_h)\,ds=\int_{\Gamma_K}\mathbf v_{h,i}\cdot\mathbf n\,ds,
		\qquad i=1,2.
	\end{equation}
	Consequently,
	\begin{equation}\label{Mgam projection identity}
		\jp{\Mgam\mathbf v_h}=\Pigam\jp{\mathbf v_h\cdot\mathbf n}
		\quad\hbox{on }\Gamma_K.
	\end{equation}
\end{lem}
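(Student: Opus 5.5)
The plan is to apply the divergence theorem on each cut subdomain $K_i$ to the restriction $\mathbf v_{h,i}|_{K_i}$. Since $\mathbf v_{h,i}|_K\in\mathbf V_h(K)\subset H(\operatorname{div};K)$, its restriction lies in $H(\operatorname{div};K_i)$. Under (A3)--(A4), $K_i$ is a Lipschitz domain whose boundary splits into $\partial K_i\cap\partial K$ and $\Gamma_K$. The normal trace of $\mathbf v_{h,i}$ on $\partial K$ is edgewise in $\mathbb P_1(e)$, hence in $L^2$. Also $\operatorname{div}\mathbf v_{h,i}$ is constant and $\mathbf v_{h,i}\in H(\operatorname{div};K)\cap H(\operatorname{rot};K)$ has the regularity of VEM functions on polygons. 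So the normal trace on $\Gamma_K$ is at least an $H^{-1/2}$ functional, and its pairing with the constant $1$ is well defined.

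For $i=1$, the outward normal of $K_1$ on $\Gamma_K$ is $\mathbf n$, since $\mathbf n$ points from $\Omega_1$ to $\Omega_2$. Green's formula with test function $1$ gives
\[
\int_{K_1}\operatorname{div}\mathbf v_{h,1}\,dx=\int_{\partial K_1\cap\partial K}\mathbf v_{h,1}\cdot\mathbf n_{K_1}\,ds+\int_{\Gamma_K}\mathbf v_{h,1}\cdot\mathbf n\,ds .
\]
Comparing with \eqref{Mgam 1}, and using that $\mathcal M^1_{\Gamma,K}$ is constant so its integral over $\Gamma_K$ is $|\Gamma_K|\mathcal M^1_{\Gamma,K}$, yields \eqref{Mgam exact moment} for $i=1$.

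For $i=2$, the outward normal of $K_2$ on $\Gamma_K$ is $-\mathbf n$. Green's formula becomes $\int_{K_2}\operatorname{div}\mathbf v_{h,2}=\int_{\partial K_2\cap\partial K}\mathbf v_{h,2}\cdot\mathbf n_{K_2}-\int_{\Gamma_K}\mathbf v_{h,2}\cdot\mathbf n$. Rearranging and comparing with \eqref{Mgam 2} gives the claim for $i=2$.

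For \eqref{Mgam projection identity}, I subtract the two identities. This gives $\int_{\Gamma_K}\jp{\Mgam\mathbf v_h}\,ds=\int_{\Gamma_K}\jp{\mathbf v_h\cdot\mathbf n}\,ds$. Since $\jp{\Mgam\mathbf v_h}$ is constant on $\Gamma_K$, it equals $|\Gamma_K|^{-1}\int_{\Gamma_K}\jp{\mathbf v_h\cdot\mathbf n}\,ds$, which is by definition $\Pigam\jp{\mathbf v_h\cdot\mathbf n}$. (If the normal trace is only in $H^{-1/2}$, $\Pigam$ is read as this duality pairing with $1$.)

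The only delicate point is the justification of the divergence theorem on the curved cut subdomain $K_i$ for a virtual function. I would handle it by the generalized Green formula in $H(\operatorname{div};K_i)$ on the Lipschitz domain $K_i$, tested against the constant $1\in H^1(K_i)$. I would then note that the boundary pairing splits additively over $\partial K_i\cap\partial K$ and $\Gamma_K$, because the trace on the straight edges is in $L^2$. Everything else is bookkeeping of the orientation of normals.
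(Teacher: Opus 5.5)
Your proposal is correct and follows the paper's proof. You apply the divergence theorem on each $K_i$, noting that the outward normal on $\Gamma_K$ is $\mathbf n$ for $K_1$ and $-\mathbf n$ for $K_2$; this reproduces the defining identities of $\mathcal M^i_{\Gamma,K}(\mathbf v_h)$, and since these are constant on $\Gamma_K$, $\jp{\Mgam\mathbf v_h}=\Pigam\jp{\mathbf v_h\cdot\mathbf n}$ follows. Your remarks on the generalized Green formula and on splitting the boundary pairing are extra rigor that the paper takes for granted.
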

\begin{proof}
	For $i=1$, the outward unit normal to $K_1$ on $\Gamma_K$ is $\mathbf n$. The divergence theorem gives
	\[
	\int_{K_1}\operatorname{div}\mathbf v_{h,1}\,dx
	=\int_{\partial K_1\cap\partial K}\mathbf v_{h,1}\cdot\mathbf n_{K_1}\,ds
	+\int_{\Gamma_K}\mathbf v_{h,1}\cdot\mathbf n\,ds,
	\]
	which is \eqref{Mgam 1}. For $i=2$, the outward normal to $K_2$ on $\Gamma_K$ is $-\mathbf n$, and the same argument gives \eqref{Mgam 2}. Since $\mathcal M_{\Gamma,K}^i$ is constant on $\Gamma_K$, \eqref{Mgam projection identity} follows from \eqref{Mgam exact moment}.
\end{proof}

\subsection{Mesh-dependent norms}
For $(\mathbf{v}_h,q_h) \in \mathbf{V}_h \times Q_h$, define
\begin{equation*}
	\begin{aligned}
		&\Vert \avg{q_h} \Vert_{-\frac{1}{2},h,\Gamma}
		= \Bigl( \sum_{K \in \mathcal{T}_h^{\Gamma}} h_K \Vert \avg{q_h} \Vert_{0,\Gamma_K}^2 \Bigr)^{\frac{1}{2}},\\
		&\Vert \phi \Vert_{\frac{1}{2},h,\Gamma}
		= \Bigl(\sum_{K \in \mathcal{T}_h^{\Gamma}} h_K^{-1} \Vert \phi \Vert_{0,\Gamma_K}^2 \Bigr)^{\frac{1}{2}}, \\
		&\NNorm{\mathbf{v}_h}^2
		= \sum_{i=1}^2\sum_{K\in\mathcal T_{h,i}^{+}}\Vert \alpha_i^{-\frac12}\QK\mathbf v_{h,i}\Vert_{0,K_i}^2
		+\sum_{i=1}^2\sum_{K\in\mathcal T_{h,i}^{+}}
		S_{\alpha,i}^{K,+}((I-\QK)\mathbf v_{h,i},(I-\QK)\mathbf v_{h,i})\\
		&\hspace{3.2cm}
		+\PF(\mathbf v_h,\mathbf v_h),\\
		&\Vert \mathbf{v}_h \Vert_h^2
		=\NNorm{\mathbf{v}_h}^2+\Vert \alpha^{-\frac12}\operatorname{div}\mathbf v_h\Vert_{0,\Omega_1\cup\Omega_2}^2
		+\Gdiv(\mathbf v_h,\mathbf v_h),\\
		&\Vert (\mathbf{v}_h,q_h) \Vert_h^2
		=\Vert \mathbf{v}_h \Vert_h^2
		+\Vert \alpha^{\frac12}q_h\Vert_{0,\Omega_1\cup\Omega_2}^2
		+J_1(q_h,q_h)+J_2(q_h,q_h).
	\end{aligned}
\end{equation*}
The norm uses the enhanced stabilization $S_{\alpha,i}^{K,+}$, which equals the standard $L^2$-scale VEM stabilization away from the interface and contains the additional kernel-component control on cut elements. The divergence ghost penalty $\Gdiv$ is included because it yields the extended-domain estimate for the discrete divergence, see Lemma~\ref{lem div ghost extension}. The corrected flux penalty $\PF$ controls the corrected interface flux \eqref{Fgam definition}.

\section{Analysis of the scheme}

Throughout the analysis, define
\begin{equation*}
	\mathcal L_h(\mathbf u_h,p_h;\mathbf v_h,q_h)
	=A_h(\mathbf u_h,p_h;\mathbf v_h,q_h)+\gamma_1J_1(p_h,q_h)+\gamma_2J_2(p_h,q_h).
\end{equation*}

\subsection{Auxiliary estimates for the unfitted VEM analysis}

\begin{lem}[Local VEM stabilization]\label{lem local S stability}
	For every element $K$ and every $\mathbf w_h\in\ker\QK$, the standard $L^2$-scaled VEM stabilization satisfies
	\begin{equation}\label{S stability}
		c_*\alpha_i^{-1}\Vert\mathbf w_h\Vert_{0,K}^2
		\le S_{\alpha,i}^{K}(\mathbf w_h,\mathbf w_h)
		\le c^*\alpha_i^{-1}\Vert\mathbf w_h\Vert_{0,K}^2 .
	\end{equation}
	The constants depend only on the shape-regularity constants of the polygonal mesh.
\end{lem}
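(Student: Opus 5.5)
Since $\alpha_i^{-1}$ appears as a common factor on both sides, it suffices to prove
\[
c_*\Vert\mathbf w_h\Vert_{0,K}^2\le |K|\sum_{r=1}^{N_K}\widehat\chi_r(\mathbf w_h)^2\le c^*\Vert\mathbf w_h\Vert_{0,K}^2
\]
for $\mathbf w_h\in\mathbf V_h(K)\cap\ker\QK$. We use the standard scaling argument for lowest-order $H(\operatorname{div})$ virtual elements, as in the stability analyses of \cite{BeiraoLovadinaRusso2017,BrennerSung2018}. We prove the two bounds separately, since the upper bound holds on all of $\mathbf V_h(K)$ and the kernel restriction is needed only for the lower bound.

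\emph{Upper bound.} The internal degree of freedom satisfies $|\widehat\chi_K(\mathbf w)|\le |K|^{-1}\Vert\mathbf w\Vert_{0,K}\Vert\mathbf g_K^\perp\Vert_{0,K}\lesssim |K|^{-1/2}\Vert\mathbf w\Vert_{0,K}$, because $\mathbf g_K^\perp$ is scaled to be $O(1)$ pointwise. For the edge degrees of freedom, Cauchy--Schwarz gives $|\widehat\chi_{e,\ell}(\mathbf w)|\lesssim |e|^{-1/2}\Vert\mathbf w\cdot\mathbf n\Vert_{0,e}$. We then need the inverse-type trace bound $\Vert\mathbf w\cdot\mathbf n\Vert_{0,e}\lesssim h_K^{-1/2}\Vert\mathbf w\Vert_{0,K}$ for $\mathbf w\in\mathbf V_h(K)$. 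To obtain it, we use that $\mathbf V_h(K)$ is finite-dimensional and defined by polynomial data. Specifically, we write $\mathbf w=\nabla\phi+\operatorname{rot}\psi$ (a div-rot Helmholtz-type decomposition), where $\phi$ and $\psi$ solve Neumann and Dirichlet Poisson problems on the star-shaped $K$ with piecewise-polynomial data, and apply regularity and inverse estimates on the subtriangulation induced by the star center. If edges are short compared to $h_K$, we note that (A2) bounds the number of edges, and the factor $|K|/|e|$ is harmless only if $|e|\gtrsim h_K$. So we either assume this, which is implicit in standard VEM shape regularity, or follow \cite{BrennerSung2018}. Combining these estimates yields $|K|\sum_r\widehat\chi_r(\mathbf w)^2\lesssim\Vert\mathbf w\Vert_{0,K}^2$.

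\emph{Lower bound.} This is the main step. For $\mathbf w\in\mathbf V_h(K)$ we aim to show $\Vert\mathbf w\Vert_{0,K}^2\lesssim |K|\sum_r\widehat\chi_r(\mathbf w)^2+\Vert\QK\mathbf w\Vert_{0,K}^2$; the last term vanishes on $\ker\QK$. We decompose $\mathbf w=\nabla\phi+\operatorname{rot}\psi$ with $\Delta\phi=\operatorname{div}\mathbf w\in\mathbb P_0$ and $\partial_n\phi=\mathbf w\cdot\mathbf n$ on $\partial K$, and $-\Delta\psi=\operatorname{rot}\mathbf w\in\mathbb P_0$ with $\psi=0$ on $\partial K$.
\begin{itemize}
\item[(i)] For the gradient part, the constant $\operatorname{div}\mathbf w=|K|^{-1}\int_{\partial K}\mathbf w\cdot\mathbf n$ is determined by the edge degrees of freedom. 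The Neumann energy estimate plus a trace inequality on star-shaped $K$ then gives $\Vert\nabla\phi\Vert_{0,K}\lesssim h_K^{1/2}\Vert\mathbf w\cdot\mathbf n\Vert_{0,\partial K}\lesssim |K|^{1/2}(\sum_{e,\ell}\widehat\chi_{e,\ell}^2)^{1/2}$, using that $\mathbf w\cdot\mathbf n|_e\in\mathbb P_1(e)$ is equivalent to its two moments.
\item[(ii)] For the rotational part, we have $\Vert\operatorname{rot}\psi\Vert_{0,K}^2=(\operatorname{rot}\mathbf w,\psi)_K$ with $\operatorname{rot}\mathbf w=c$ constant. We express $c$ through the moment $\int_K\mathbf w\cdot\mathbf g$ for $\mathbf g=\frac12(-y,x)$ (recentred), which lies in $[\mathbb P_1]^2$ with $\operatorname{rot}\mathbf g=1$, up to boundary terms. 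This moment is in turn a combination of $\widehat\chi_K(\mathbf w)$ and the $\mathcal G_1$-moments. The $\mathcal G_1$-moments are $\int_K\mathbf w\cdot\nabla q=-\int_K\operatorname{div}\mathbf w\,q+\int_{\partial K}\mathbf w\cdot\mathbf n\,q$ for $q\in\mathbb P_2$, so they are controlled by the edge data together with a bound for $\Vert\mathbf w\cdot\mathbf n\Vert_{0,\partial K}$. A Poincar\'e bound for $\psi$ then gives $\Vert\operatorname{rot}\psi\Vert_{0,K}\lesssim |K|^{1/2}(\sum_r\widehat\chi_r^2)^{1/2}$.
\end{itemize}

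Once the right scaling is identified, we expect step (ii) to be routine; the real obstacle is keeping all constants dependent only on the shape-regularity constants. If the direct argument proves cumbersome, we will fall back on mapping $K$ to a reference-size polygon $h_K^{-1}K$ and using a compactness argument over the (compact, by (A2)) family of admissible shapes, with equivalence of norms on the finite-dimensional space $\mathbf V_h(K)$. Under this mapping the scaled degrees of freedom are invariant, and $|K|$ accounts exactly for the $L^2$ scaling.
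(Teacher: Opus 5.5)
Your proposal is correct in substance, but its main line is more explicit than the paper's.

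\textbf{Comparison with the paper.} The paper does not really argue the lemma. It invokes norm equivalence on the finite-dimensional space $\mathbf V_h(K)\cap\ker\QK$ together with the $|K|$-scaling of $S_{\alpha,i}^K$, and cites the $H(\operatorname{div})$ VEM literature. That is exactly your fallback: dilate to $h_K^{-1}K$, where the scaled degrees of freedom are invariant.

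Your primary route is constructive. It uses the Helmholtz splitting $\mathbf w=\nabla\phi+\operatorname{rot}\psi$, the Neumann energy bound $\|\nabla\phi\|_{0,K}\lesssim h_K^{1/2}\|\mathbf w\cdot\mathbf n\|_{0,\partial K}$, and a Dirichlet/Poincar\'e bound for $\psi$. This gives explicit constants that depend only on the chunkiness parameter. It also shows that the lower bound holds on all of $\mathbf V_h(K)$, with no restriction on edge lengths.

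\textbf{Simplifying step (ii).} On $\ker\QK$ you can shortcut step (ii), because the interior degree of freedom and every $[\mathbb P_1(K)]^2$-moment vanish. Hence $\int_K\psi=-(\nabla\phi,\mathbf g)_K$. Let $\psi_0\in H^1_0(K)$ solve $-\Delta\psi_0=1$. Then $\|\operatorname{rot}\psi\|_{0,K}=|\int_K\psi|\,(\int_K\psi_0)^{-1/2}$, and so $\|\operatorname{rot}\psi\|_{0,K}\lesssim\|\nabla\phi\|_{0,K}$. The one fact you leave implicit, in either version of (ii), is the torsion bound $\int_K\psi_0\gtrsim h_K^4$. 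It follows from domain monotonicity and the inscribed disk.

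\textbf{Short edges.} Your caveat here is not a technicality, and following small-edge analyses does not rescue it. With (A2) read literally, which only bounds the number of edges, the upper bound of the lemma is false. Suppose $|e|=\epsilon h_K$, and let $\mathbf w=\nabla\phi\in\mathbf V_h(K)$ have $\mathbf w\cdot\mathbf n=\chi_e$ on $\partial K$, so $\operatorname{div}\mathbf w=|e|/|K|$ and $\operatorname{rot}\mathbf w=0$.
\begin{itemize}
\item Then $\|\mathbf w\|_{0,K}^2\lesssim\epsilon^2h_K^2\log(1/\epsilon)$.
\item Taking $m_{e,0}\equiv1$, $(I-\QK)\mathbf w$ still has $\widehat\chi_{e,0}\approx1$, because $\|\QK\mathbf w\|_{L^\infty(K)}\lesssim h_K^{-1}\|\mathbf w\|_{0,K}$.
\item Hence the ratio $S_{\alpha,i}^K/(\alpha_i^{-1}\|\cdot\|_{0,K}^2)$ on $\ker\QK$ is $\gtrsim\epsilon^{-2}/\log(1/\epsilon)$.
\end{itemize}
So $|e|\ge\rho h_K$ must be added to (A2). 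Your compactness fallback needs it too, since otherwise collapsing edges change the dimension of the space.

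\textbf{The inverse trace step.} Given that edge-length assumption, do not argue the inverse trace estimate in your upper bound via elliptic regularity. Regularity bounds $\phi$ by its Neumann data, not the reverse. Use duality instead:
\begin{itemize}
\item Let $\chi$ be a polynomial on the triangle spanned by $e$ and the disk center. It equals $b_e\,\mathbf w\cdot\mathbf n$ on $e$, vanishes on the other two sides, and is extended by zero.
\item Correct $\chi$ by an interior bubble so that $\int_K\chi=0$. Then the constant divergence drops out.
\item This gives $\|\mathbf w\cdot\mathbf n\|_{0,e}^2\lesssim(\mathbf w,\nabla\chi)_K\lesssim h_K^{-1/2}\|\mathbf w\|_{0,K}\|\mathbf w\cdot\mathbf n\|_{0,e}$.
\end{itemize}
The condition $|e|\simeq h_K$ is exactly what makes that triangle shape regular. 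Finally, on $\ker\QK$ the interior degree of freedom vanishes identically, so your upper bound only needs the edge moments.
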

\begin{proof}
	This is the standard finite-dimensional stability estimate for the scaled degrees of freedom in the lowest-order $H(\operatorname{div})$ VEM. It follows from the local norm equivalence on the kernel of the $L^2$ projection and the scaling in \eqref{S alpha zero definition}; see, for example, \cite{2016H(div)H(curl)}.
\end{proof}

\begin{lem}[Extended-domain estimate for piecewise constants]\label{lem ghost extension pc}
	Let $z_{h,i}$ be piecewise constant on the extended mesh $\mathcal T_{h,i}^{+}$, and let $\beta_i>0$ be constant on each subdomain. Under Assumption~\ref{ass unfitted geometry},
	\begin{equation}\label{pc extension estimate}
		\sum_{i=1}^2\Vert \beta_i^{1/2}z_{h,i}\Vert_{0,\Omega_{h,i}}^2
		\le C\sum_{i=1}^2\left(
		\Vert \beta_i^{1/2}z_{h,i}\Vert_{0,\Omega_i}^2
		+
		\sum_{e\in\mathcal E_{\Gamma,i}}h_e\beta_i\Vert\jp{z_{h,i}}\Vert_{0,e}^2
		\right).
	\end{equation}
	The constant is independent of $h$ and of the position of the interface.
\end{lem}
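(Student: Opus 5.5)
The estimate is local, so I will prove it element by element on the extended mesh and then sum over the chains. Fix $i$ and drop the factor $\beta_i$, since it is a constant on each subdomain and multiplies both sides. The left-hand side is $\sum_{K\in\mathcal T_{h,i}^{+}}|K|\,z_K^2$, where $z_K$ is the constant value of $z_{h,i}$ on $K$. On a non-cut element $K\subset\Omega_i$ we have $K_i=K$, so $|K|z_K^2=\Vert z_{h,i}\Vert_{0,K\cap\Omega_i}^2$, and these terms are bounded directly by the first term on the right. Only the cut elements $K\in\mathcal T_h^\Gamma\cap\mathcal T_{h,i}^{+}$ need treatment; there $|K_i|$ may be arbitrarily small relative to $|K|$.

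For such a cut element $K$, Assumption~\ref{ass unfitted geometry}(A5) gives a chain $K=K^{(0)},K^{(1)},\dots,K^{(m)}=K'$ with $|K'\cap\Omega_i|\ge c_0|K'|$. Consecutive elements share an edge $e_j\in\mathcal E_{\Gamma,i}$, and $m$ is uniformly bounded. Telescoping gives
\[
z_K = z_{K'} + \sum_{j=1}^{m}\bigl(z_{K^{(j-1)}}-z_{K^{(j)}}\bigr),
\]
so by Cauchy--Schwarz $z_K^2\le (m+1)\bigl(z_{K'}^2+\sum_j \jp{z_{h,i}}_{e_j}^2\bigr)$. I then multiply by $|K|$ and use shape regularity from (A2). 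Neighboring elements have comparable diameters, so $|K|\simeq h_K^2\simeq h_{e_j}^2$ (the bounded number of edges and the star-shapedness with respect to a disk prevent degenerate edges relative to $h_K$). Since the jump is constant on $e_j$, $|K|\,\jp{z_{h,i}}_{e_j}^2\le C h_{e_j}\Vert\jp{z_{h,i}}\Vert_{0,e_j}^2$. Similarly $|K|\,z_{K'}^2\le C|K'|z_{K'}^2\le C c_0^{-1}|K'\cap\Omega_i|\,z_{K'}^2=Cc_0^{-1}\Vert z_{h,i}\Vert_{0,K'\cap\Omega_i}^2$.

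Summing over all cut elements, each physical piece $K'\cap\Omega_i$ and each edge $e\in\mathcal E_{\Gamma,i}$ is used only a uniformly bounded number of times, by the bounded overlap of chains in (A5). Adding the non-cut contribution and summing over $i=1,2$ yields the estimate \eqref{pc extension estimate}. The constant depends only on $c_0$, the chain length and overlap bounds, and the shape-regularity constants, so it is independent of $h$ and of how the interface cuts the mesh.

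The main obstacle is the edge-length scaling $h_e\simeq h_K$ on polygonal meshes. It is needed so that the transmission-edge term, weighted by $h_e$, controls $|K|$ times the squared jump. (A2) alone does not forbid small edges. If only a lower bound $|e|\ge c h_K$ is missing, I would instead route the chain through edges of length comparable to $h_K$: such an edge exists between neighbors because the shared boundary of two star-shaped elements contains a long edge, or the enlarged layer allowed in \eqref{ghost edge set} can be used. Alternatively, I would define $h_e$ as the local element size $\max(h_{K_l},h_{K_r})$, which makes this step immediate.
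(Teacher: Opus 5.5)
Your argument is the paper's argument. The paper's proof is a two-sentence version of the same chain reasoning: the value on a cut element is controlled by the value on a fat element $K'$ from (A5) plus the jumps along a bounded chain, and the result follows by summing with bounded overlap. Your telescoping/Cauchy--Schwarz write-up is a faithful, more detailed version of it. It is correct once the chain edges satisfy $|e|\ge c\,h_K$, the usual VEM edge condition. That condition also makes neighbouring elements comparable, since a shared edge satisfies $c\,h_K\le|e|\le h_{K'}$. Hence $|K|\lesssim|K'|$ and $|K|\lesssim h_{e_j}|e_j|$ along the chain. The paper's sketch tacitly uses the same condition, presumably as part of ``shape regular in the standard VEM sense''.

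Your handling of that scaling step, however, contains errors.

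\emph{The parenthetical is false.} Star-shapedness with respect to a disk of diameter $\simeq h_K$, together with a bounded number of edges, does not exclude edges with $|e|\ll h_K$. A square with one corner cut off by a tiny edge is a counterexample.

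\emph{Neither fallback in your last paragraph works.} Since the jump is constant on $e$, we have $h_e\Vert\jp{z_{h,i}}\Vert_{0,e}^2=h_e\,|e|\,|\jp{z_{h,i}}|^2$. So setting $h_e=\max(h_{K_l},h_{K_r})$ still requires $|e|\gtrsim h_K$. Rerouting also fails, because two neighbours may share only one short edge, so there need not be a long shared edge to route through.

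\emph{Without an edge-length bound the inequality is false.} Take a cut element $K$ with the following features:
\begin{enumerate}
\item $K$ has two consecutive edges of length $\epsilon h_K$ at a vertex.
\item $\Gamma$ crosses exactly these two edges, so $K_i$ has diameter $O(\epsilon h_K)$.
\item The neighbours across these two edges are fat in $\Omega_i$.
\item All other neighbours of $K$ lie in the other subdomain.
\end{enumerate}
Then (A1)--(A5) hold. Choose $z_{h,i}=1$ on $K$ and $0$ elsewhere. The left-hand side is $\simeq h_K^2$, while the right-hand side is $O(\epsilon h_K^2)$, even with $h_e\simeq h_K$.

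So the edge-length condition must be added as a hypothesis, at least for edges in $\mathcal E_{\Gamma,i}$. It cannot be circumvented.
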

\begin{proof}
	This is the standard consequence of the patch condition in Assumption~\ref{ass unfitted geometry}. Along each uniformly bounded chain of transmission edges, the value of a piecewise constant on a cut element is controlled by the value on an element with a uniformly positive intersection with the corresponding subdomain plus the jumps across the edges in the chain. Summing over all chains and using the bounded overlap gives \eqref{pc extension estimate}.
\end{proof}

\begin{lem}[Trace estimate for harmonic scalar averages]\label{lem average trace}
	For every $q_h\in Q_h$,
	\begin{equation}\label{average trace estimate}
		\Vert\alphaG^{\frac12}\avg{q_h}\Vert_{-\frac12,h,\Gamma}^2
		\le C\left(\Vert\alpha^{\frac12}q_h\Vert_{0,\Omega_1\cup\Omega_2}^2+J_1(q_h,q_h)\right).
	\end{equation}
	Consequently the right-hand side can also be enlarged by $J_2(q_h,q_h)$.
\end{lem}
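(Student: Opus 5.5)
The estimate will be proved elementwise on each cut element and then summed. Fix $K\in\mathcal T_h^\Gamma$. Since $q_{h,i}|_K$ is constant, $\avg{q_h}=\omega_1q_{h,1}+\omega_2q_{h,2}$ is constant on $\Gamma_K$. Hence
\[
h_K\alphaG\Vert\avg{q_h}\Vert_{0,\Gamma_K}^2
\le 2h_K|\Gamma_K|\alphaG\bigl(\omega_1^2 q_{h,1}^2+\omega_2^2q_{h,2}^2\bigr).
\]
The coefficient weights are harmless: $\alphaG\omega_1^2=\frac{2\alpha_1\alpha_2^3}{(\alpha_1+\alpha_2)^3}\le 2\alpha_1$, because $\alpha_2^3\le(\alpha_1+\alpha_2)^3$, and symmetrically $\alphaG\omega_2^2\le2\alpha_2$. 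Thus
\[
h_K\alphaG\Vert\avg{q_h}\Vert_{0,\Gamma_K}^2\le 4h_K|\Gamma_K|\sum_{i}\alpha_i q_{h,i}^2,
\]
with a constant independent of the contrast.

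Next we use geometry. Assumptions (A3)--(A4), with $\delta_K$ smooth and $h$ small, give $|\Gamma_K|\le Ch_K$. Shape regularity (A2) gives $h_K^2\le C|K|$. Therefore
\[
h_K|\Gamma_K|\alpha_iq_{h,i}^2\le C|K|\alpha_iq_{h,i}^2=C\Vert\alpha_i^{1/2}q_{h,i}\Vert_{0,K}^2 .
\]
This is the key point: the estimate is on the whole background element $K$, not on the possibly tiny cut part $K_i$. That is why the bound is cut-position independent, but it requires the extended-domain norm of $q_{h,i}$.

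Summing over $K\in\mathcal T_h^\Gamma$ gives
\[
\Vert\alphaG^{1/2}\avg{q_h}\Vert_{-\frac12,h,\Gamma}^2\le C\sum_i\Vert\alpha_i^{1/2}q_{h,i}\Vert_{0,\Omega_{h,i}}^2 .
\]
Every $K\in\mathcal T_h^\Gamma$ belongs to both $\mathcal T_{h,1}^+$ and $\mathcal T_{h,2}^+$, so each term is counted once per $i$. Now apply Lemma~\ref{lem ghost extension pc} with $z_{h,i}=q_{h,i}$ and $\beta_i=\alpha_i$. The right-hand side of \eqref{pc extension estimate} is then exactly $\Vert\alpha^{1/2}q_h\Vert_{0,\Omega_1\cup\Omega_2}^2+J_1(q_h,q_h)$ by \eqref{J1 definition}, which proves \eqref{average trace estimate}. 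The final statement is trivial, since $J_2(q_h,q_h)\ge0$.

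The main obstacle is the passage from the cut element to the physical domain. It is handled entirely by the ghost-penalty extension of Lemma~\ref{lem ghost extension pc}, which in turn depends on the patch condition (A5). The only other points that need care are the uniform bound $|\Gamma_K|\lesssim h_K$ and checking that the harmonic weights do not introduce contrast-dependent constants, which the algebra above confirms.
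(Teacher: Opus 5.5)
Your proposal is correct and follows essentially the same route as the paper. Both arguments bound the harmonic weights via $\alphaG\omega_i^2\le C\alpha_i$, pass from $\Gamma_K$ to the whole background element $K\in\mathcal T_{h,i}^{+}$ by a trace bound for constants, and close with Lemma~\ref{lem ghost extension pc} for $\beta_i=\alpha_i$ and the definition of $J_1$; you simply make the paper's ``local trace bound for piecewise constants'' explicit through $|\Gamma_K|\le Ch_K$, $h_K^2\le C|K|$, and the contrast-free constant $\alphaG\omega_i^2\le 2\alpha_i$.
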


\begin{proof}
	By \eqref{harmonic weights},
	\begin{equation*}
		\avg{q_h}=\omega_1q_{h,1}+\omega_2q_{h,2},
		\qquad
		\alphaG\omega_1^2\le C\alpha_1,
		\qquad
		\alphaG\omega_2^2\le C\alpha_2.
	\end{equation*}
	Therefore
	\begin{align*}
		\Vert\alphaG^{1/2}\avg{q_h}\Vert_{-1/2,h,\Gamma}^2
		&=\sum_{K\in\mathcal T_h^\Gamma}h_K\Vert\alphaG^{1/2}(\omega_1q_{h,1}+\omega_2q_{h,2})\Vert_{0,\Gamma_K}^2\\
		&\le C\sum_{i=1}^2\sum_{K\in\mathcal T_h^\Gamma}h_K\alpha_i\Vert q_{h,i}\Vert_{0,\Gamma_K}^2.
	\end{align*}
	For piecewise constants, the local trace bound on an interface segment gives
	\[
	\sum_{K\in\mathcal T_h^\Gamma}h_K\alpha_i\Vert q_{h,i}\Vert_{0,\Gamma_K}^2
	\le C\Vert\alpha_i^{1/2}q_{h,i}\Vert_{0,\Omega_{h,i}}^2.
	\]
	Applying Lemma~\ref{lem ghost extension pc} with $z_{h,i}=q_{h,i}$ and $\beta_i=\alpha_i$, and then using the definition of $J_1$, gives \eqref{average trace estimate}.
\end{proof}

\begin{lem}[Commuting property of the VEM interpolant]\label{lem commuting}
	Let $\Idiv\mathbf z\in\mathbf V_h$ be the $H(\operatorname{div})$ VEM interpolant defined by the degrees of freedom \eqref{scaled dofs}--\eqref{scaled internal dof}. Then
	\begin{equation}\label{commuting property lifting}
		\operatorname{div}(\Idiv\mathbf z)=P_h^0(\operatorname{div}\mathbf z)
	\end{equation}
	on each element.
\end{lem}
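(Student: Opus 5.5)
The plan is to use the divergence theorem on a single element, together with the fact that $\operatorname{div}\mathbf v$ is constant for every $\mathbf v\in\mathbf V_h(K)$. Fix $K$ and a field $\mathbf z$ regular enough that the degrees of freedom \eqref{scaled dofs}--\eqref{scaled internal dof} are defined; it suffices that $\mathbf z\in H(\operatorname{div};K)$ with $\mathbf z\cdot\mathbf n|_e\in L^1(e)$, as is the case for $E^i_{\rm div}\mathbf u_i$. Set $\mathbf z_I:=\Idiv\mathbf z$. By \eqref{local VEM space}, $\operatorname{div}\mathbf z_I\in\mathbb P_0(K)$. It therefore suffices to show
\[
\int_K\operatorname{div}\mathbf z_I\,dx=\int_K\operatorname{div}\mathbf z\,dx,
\]
because $P_h^0(\operatorname{div}\mathbf z)|_K$ is the unique constant with the same integral over $K$ as $\operatorname{div}\mathbf z$.

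To obtain this equality of integrals, I would apply the divergence theorem to both fields:
\[
\int_K\operatorname{div}\mathbf z_I\,dx=\sum_{e\subset\partial K}\int_e\mathbf z_I\cdot\mathbf n\,ds,\qquad
\int_K\operatorname{div}\mathbf z\,dx=\sum_{e\subset\partial K}\int_e\mathbf z\cdot\mathbf n\,ds .
\]
The constant function $1$ lies in $\mathbb P_1(e)$, so it is a linear combination of the scaled basis functions $m_{e,0}$ and $m_{e,1}$. Since the interpolant matches all the edge degrees of freedom $\widehat\chi_{e,\ell}$, linearity gives
\[
\int_e\mathbf z_I\cdot\mathbf n\,ds=\int_e\mathbf z\cdot\mathbf n\,ds\qquad\text{for every edge } e\subset\partial K .
\]
Summing over the edges of $K$ then yields the claimed equality of integrals, and the commuting identity \eqref{commuting property lifting} follows on every element. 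The internal degree of freedom \eqref{scaled internal dof} plays no role in this argument. Because the edge moments are shared by neighbouring elements, the same computation also shows that the elementwise interpolant has continuous normal components, so it belongs to $\mathbf V_{h,i}$.

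The main point of care is the well-definedness of $\Idiv$: the edge moments must make sense for the extended field $E^i_{\rm div}\mathbf u_i$. This holds by the regularity bound \eqref{div extension bound} and the trace theorem, since $H^2$ fields have traces on edges. Unisolvence of the degrees of freedom, cited from \cite{2016H(div)H(curl)}, guarantees that $\mathbf z_I$ exists and is unique. Beyond these points the argument is a direct computation.
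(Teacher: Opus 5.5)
Your proof is correct and follows essentially the same route as the paper: both use the divergence theorem on $K$, the preservation of the edge normal-flux moments (the constant $1$ lies in the span of $m_{e,0},m_{e,1}$), and the fact that $\operatorname{div}(\Idiv\mathbf z)$ and $P_h^0(\operatorname{div}\mathbf z)$ are both constants on $K$. The paper tests against an arbitrary constant $c$ rather than comparing integrals directly, which amounts to the same argument.
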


\begin{proof}
	For every constant $c\in\mathbb P_0(K)$,
	\begin{equation*}
		(\operatorname{div}(\Idiv\mathbf z),c)_K
		=c\int_{\partial K}\Idiv\mathbf z\cdot\mathbf n\,ds
		=c\int_{\partial K}\mathbf z\cdot\mathbf n\,ds
		=(\operatorname{div}\mathbf z,c)_K,
	\end{equation*}
	because the edge normal-flux averages on all edges are preserved by the interpolant. Since both sides are constants on $K$, \eqref{commuting property lifting} follows.
\end{proof}

For the polygonal pressure lifting, define for $p_h\in Q_h$
\begin{equation}\label{Mh definition}
	M_h(p_h,p_h):=
	\sum_{\substack{K\notin\mathcal T_h^\Gamma\\ K\subset\Omega_i}}|K|p_{K,i}^2
	+
	\sum_{K\in\mathcal T_h^\Gamma}|K|\bar p_K^2,
	\qquad
	\bar p_K:=\frac{|K_1|p_{K,1}+|K_2|p_{K,2}}{|K|}.
\end{equation}
Here and below the first sum is understood over $i=1,2$.

In the following lifting argument, a discrete pressure \(p_h\) is also
viewed as a piecewise constant function on the whole background element.
More precisely, if \(K\in\mathcal T_{h,i}^{+}\setminus\mathcal T_h^\Gamma\),
then \(K\subset\Omega_i\) and
\[
P_K^0p_h=p_{K,i}.
\]
If \(K\in\mathcal T_h^\Gamma\), then
\[
p_h|_K=p_{K,1}\chi_{K_1}+p_{K,2}\chi_{K_2},
\]
and the whole-cell \(L^2\)-projection onto constants is
\[
P_K^0p_h
=
\frac1{|K|}\int_Kp_h\,dx
=
\frac{|K_1|p_{K,1}+|K_2|p_{K,2}}{|K|}
=
\bar p_K.
\]
Thus \(P_K^0p_h\) equals the corresponding phase value on non-cut elements
and equals the area-weighted average \(\bar p_K\) on cut elements.

\begin{lem}[Scalar extension estimate with $J_1+J_2$]\label{lem scalar patch}
	There exists a cut-independent constant $C$ such that
	\begin{equation}\label{scalar patch estimate}
		\Vert\alpha^{1/2}p_h\Vert_{0,\Omega_1\cup\Omega_2}^2
		\le
		C_\alpha\left(\alpha_{\min}M_h(p_h,p_h)+J_1(p_h,p_h)+J_2(p_h,p_h)\right).
	\end{equation}
	The constant $C_\alpha$ is independent of $h$ and of the cut position. With the present weighting, it may depend on the coefficient contrast $\alpha_{\max}/\alpha_{\min}$.
\end{lem}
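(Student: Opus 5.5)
The plan is to split $\Vert\alpha^{1/2}p_h\Vert_{0,\Omega_1\cup\Omega_2}^2$ into a contribution from non-cut elements and one from cut elements, and to treat the cut part in two stages. Non-cut elements are controlled directly by $M_h$. Cut elements are first recovered through the area-weighted averages $\bar p_K$ and the interface jump $J_2$. The remaining small-cut phases are then handled by extending through the patch chains of Assumption~\ref{ass unfitted geometry}(A5) with the transmission-edge penalty $J_1$.

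\textbf{Non-cut elements.} For $K\notin\mathcal T_h^\Gamma$ with $K\subset\Omega_i$ we have
\[
\alpha_i|K|p_{K,i}^2\le(\alpha_{\max}/\alpha_{\min})\,\alpha_{\min}|K|p_{K,i}^2 .
\]
This term is therefore bounded by $C_\alpha\alpha_{\min}M_h$.

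\textbf{Cut elements, reduction to $\bar p_K$ and the jump.} For $K\in\mathcal T_h^\Gamma$, the identity $p_{K,1}-\bar p_K=\frac{|K_2|}{|K|}(p_{K,1}-p_{K,2})$ and its analogue for $p_{K,2}$ give
\[
\sum_i\alpha_i|K_i|p_{K,i}^2\le C\alpha_{\max}\Bigl(|K|\bar p_K^2+\tfrac{|K_1||K_2|}{|K|}(p_{K,1}-p_{K,2})^2\Bigr).
\]
The first term is part of $M_h$, up to the contrast factor. For the second term, $\jp{p_h}$ is constant on $\Gamma_K$, so $J_2$ on $K$ equals $h_K\alphaG|\Gamma_K|(p_{K,1}-p_{K,2})^2$. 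I therefore need
\[
\frac{|K_1||K_2|}{|K|}\le C\,h_K|\Gamma_K|,
\]
with $\alphaG\ge\alpha_{\min}$ absorbed into $C_\alpha$. This bound holds when both $|K_1|$ and $|K_2|$ are bounded by a multiple of $h_K|\Gamma_K|$. Using (A3)--(A4), the smaller piece $K_j$ is contained in a region bounded by $\Gamma_K$ and at most a bounded number of edges near the chord $\Gamma_{K,h}$. This suggests $\min_j|K_j|\le Ch_K|\Gamma_K|$ by an isoperimetric/star-shapedness argument. Since $|K_1||K_2|/|K|\le\min_j|K_j|$, this would suffice.

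\textbf{Main obstacle.} The geometric inequality $\min_j|K_j|\le Ch_K|\Gamma_K|$ can fail. A corner cut may produce a $K_j$ whose area is comparable to $h_K^2$ while $|\Gamma_K|$ is small, for instance when a vertex lies near $\Gamma$. If it fails, my fallback is not to use $J_2$ on that cell. Instead I would bound the phase value $p_{K,j}$ on the problematic cut element directly by Lemma~\ref{lem ghost extension pc}-type chain arguments: connect $K$ through edges in $\mathcal E_{\Gamma,j}$ to an element $K'$ with $|K'\cap\Omega_j|\ge c_0|K'|$. The value $p_{K',j}$ is either a non-cut value, controlled by $M_h$, or a fat-cut value. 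In the fat-cut case, $|K'_j|\ge c_0|K'|$ makes the relation between $\bar p_{K'}$, the jump, and the phase value well conditioned. The telescoping differences along the chain are then exactly $J_1$ terms, and bounded chain length and overlap close the estimate. Throughout, the weights $\alpha_i$ versus $\alpha_{\min}$, and $\alphaG$ versus $\alpha_i$, are absorbed into $C_\alpha$, which accounts for the stated dependence on $\alpha_{\max}/\alpha_{\min}$.
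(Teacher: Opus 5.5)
Your non-cut step and your reduction on cut elements are the paper's proof. It also writes $p_{K,i}=\bar p_K+(p_{K,i}-\bar p_K)$ and uses
\[
\sum_{i=1}^2|K_i|p_{K,i}^2=|K|\bar p_K^2+\frac{|K_1||K_2|}{|K|}(p_{K,1}-p_{K,2})^2 .
\]
It then simply asserts $\frac{|K_1||K_2|}{|K|}(p_{K,1}-p_{K,2})^2\le Ch_K\Vert\jp{p_h}\Vert_{0,\Gamma_K}^2$, which is exactly the bound $\frac{|K_1||K_2|}{|K|}\le Ch_K|\Gamma_K|$ you isolated.

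The gap is that you declare this bound unreliable and never prove it, so the only nontrivial step is left open. It is true under (A2)--(A3). Your corner-cut scenario is not a counterexample: a short arc near a vertex cuts off a piece of area $O(|\Gamma_K|^2)$, not $O(h_K^2)$.

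Here is the missing argument. Let $B$, with centre $c$ and radius $\rho h_K$, be the disk from (A2), and set $L=|\Gamma_K|$.
\begin{itemize}
\item If $L\ge\rho h_K/2$, then $\frac{|K_1||K_2|}{|K|}\le|K|\le h_K^2\le 2\rho^{-1}h_KL$.
\item If $L<\rho h_K/2$, every point of $\Gamma_K$ is within distance $L/2$ of $\partial K$. Hence $\Gamma_K$ misses the concentric disk $B'$ of radius $\rho h_K/2$, and $B'$ lies in one phase, say $K_1$.
\item Take $x\in K_2$. By star-shapedness, each segment from $x$ to a point of $B'$ lies in $K$ and therefore meets $\Gamma_K$. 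These segments fill a sector at $x$ of opening at least $\rho$, since $|x-c|\le h_K$.
\item A curve of length $L$ staying at distance $\ge d$ from $x$ subtends an angle at most $L/d$. Hence $\operatorname{dist}(x,\Gamma_K)\le L/\rho$.
\item So $K_2$ lies in the $(L/\rho)$-neighbourhood of $\Gamma_K$, and $\frac{|K_1||K_2|}{|K|}\le|K_2|\le C\rho^{-2}L^2\le Ch_KL$.
\end{itemize}
With this, your first route closes as in the paper, absorbing $\alpha_i\le\alpha_{\max}$ and $\alphaG\ge\alpha_{\min}$ into $C_\alpha$. Note that $J_1$ is not needed at all, because the left-hand side only involves the physical pieces $K_i$.

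The fallback would not have rescued the argument if the bound really failed. At a fat cut element $K'$ you still have to recover $p_{K',j}$ from $\bar p_{K'}$. This leaves the remainder $\frac{|K'_j|\,|K'_{3-j}|^2}{|K'|^2}(p_{K',1}-p_{K',2})^2$, which can be charged to $J_2$ on $K'$ only if $|K'_{3-j}|^2\le Ch_{K'}^3|\Gamma_{K'}|$.

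The condition $|K'_j|\ge c_0|K'|$ says nothing about the other phase. In the configuration you feared (a phase of area comparable to $h^2$ behind a short interface arc), this condition fails at $K'$ just as at $K$. So ``well conditioned'' silently relies on the same relation between cut area and interface length that the fallback was meant to avoid. The correct fix is to prove that relation, as above.
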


\begin{proof}
	The quantity $M_h(p_h,p_h)$ contains the pressure value on non-cut elements and the area-weighted pressure average $\bar p_K$ on cut elements. It therefore does not by itself control the difference $p_{K,1}-p_{K,2}$ inside a cut element. This missing part is controlled by $J_2$, while $J_1$ gives the usual control across transmission edges associated with the same subdomain.
	
	By the local patch condition in Assumption~\ref{ass unfitted geometry}, piecewise constant values on small cut portions are controlled by their values on elements with a uniformly positive intersection with the corresponding subdomain, together with the jumps measured by $J_1$. It remains only to relate the two phase values inside each cut element. For $K\in\mathcal T_h^\Gamma$, write
	\[
	p_{K,i}=\bar p_K+(p_{K,i}-\bar p_K),
	\qquad
	\bar p_K=\frac{|K_1|p_{K,1}+|K_2|p_{K,2}}{|K|}.
	\]
	Then the average part gives the contribution $|K|\bar p_K^2$ already contained in $M_h(p_h,p_h)$, and
	\[
	\sum_{i=1}^2 |K_i|(p_{K,i}-\bar p_K)^2
	=\frac{|K_1||K_2|}{|K|}(p_{K,1}-p_{K,2})^2
	\le C h_K\Vert\jp{p_h}\Vert_{0,\Gamma_K}^2 .
	\]
	Thus the cut-element difference is controlled by $J_2$. Combining these estimates with the non-cut part of $M_h$ and multiplying by the coefficient weights gives \eqref{scalar patch estimate}. The use of $\alpha_{\min}M_h$ in this estimate is the reason why the constant may depend on the coefficient contrast.
\end{proof}

\begin{lem}[Polygonal VEM pressure lifting with $J_2$]\label{lem pressure lifting}
	For every $p_h\in Q_h$, there exists $\mathbf v_{p,h}\in\mathbf V_h$ such that
	\begin{align}
		b_h(\mathbf v_{p,h},p_h)&\ge c_p\Vert\alpha^{\frac12}p_h\Vert_{0,\Omega_1\cup\Omega_2}^2-C_p\left(J_1(p_h,p_h)+J_2(p_h,p_h)\right), \label{lifting lower bound}\\
		\Vert\mathbf v_{p,h}\Vert_h&\le C_p\left(\Vert\alpha^{\frac12}p_h\Vert_{0,\Omega_1\cup\Omega_2}^2+J_1(p_h,p_h)+J_2(p_h,p_h)\right)^{1/2}. \label{lifting norm bound}
	\end{align}
	The constants are independent of $h$ and of the cut position; in this form they may depend on the coefficient contrast.
\end{lem}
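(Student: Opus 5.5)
Following Lemma~\ref{lem scalar patch}, the plan is to build the lifting from the whole-cell projection $P_K^0p_h$: it equals $p_{K,i}$ on non-cut elements and $\bar p_K$ on cut elements. Let $\tilde p_h$ be the single-valued piecewise constant on $\mathcal T_h$ with $\tilde p_h|_K=P_K^0p_h$, so that $\Vert\tilde p_h\Vert_{0,\Omega}^2=M_h(p_h,p_h)$. I would first solve the continuous problem: find $\mathbf z\in H^1(\Omega)^2$ with $\operatorname{div}\mathbf z=\tilde p_h$ in $\Omega$ and $\Vert\mathbf z\Vert_{1,\Omega}\le C\Vert\tilde p_h\Vert_{0,\Omega}$ (Bogovskii or regular right inverse of the divergence, no mean-value constraint since there is no flux boundary condition on $\mathbf v$). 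Then set $\mathbf v_{p,h,i}:=\Idiv\mathbf z$ on $\Omega_{h,i}$ for $i=1,2$, i.e.\ the \emph{same} globally $H(\operatorname{div})$-conforming VEM field used for both phases. By Lemma~\ref{lem commuting}, $\operatorname{div}\mathbf v_{p,h,i}|_K=P_K^0\tilde p_h=\tilde p_h|_K$, a constant on every background element.

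Several consequences follow from this choice. Both phases carry the same field, so $\jp{\Mgam\mathbf v_{p,h}}=\Pigam\jp{\mathbf v_{p,h}\cdot\mathbf n}=0$ by Lemma~\ref{lem Mgam consistency}. Likewise $\jp{\QK\mathbf v_{p,h}\cdot\mathbf n}=0$, hence $\mathcal F_{\Gamma,K}(\mathbf v_{p,h})=0$ and $\PF(\mathbf v_{p,h},\mathbf v_{p,h})=0$. Moreover the interface term drops from $b_h$, and
\[
b_h(\mathbf v_{p,h},p_h)=\sum_K\sum_i \tilde p_K\,|K_i|\,p_{K,i}=\sum_{K}|K|\,\tilde p_K^2=M_h(p_h,p_h),
\]
because $\sum_i|K_i|p_{K,i}=|K|\bar p_K$ on cut cells. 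Weighting is handled by choosing $\operatorname{div}\mathbf z=\alpha_{\min}\tilde p_h$, which gives $b_h=\alpha_{\min}M_h$. Lemma~\ref{lem scalar patch} then yields $\alpha_{\min}M_h\ge C_\alpha^{-1}\Vert\alpha^{1/2}p_h\Vert^2-(J_1+J_2)$, which is \eqref{lifting lower bound}.

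For \eqref{lifting norm bound}, I would bound each piece of $\Vert\mathbf v_{p,h}\Vert_h$. The term $\Vert\alpha^{-1/2}\operatorname{div}\mathbf v_{p,h}\Vert_{0}^2$ is at most $\alpha_{\min}^{2}\alpha_{\min}^{-1}M_h$. The projection terms and the $S^{K,+}$ terms are handled by the $L^2$-stability of the projection, Lemma~\ref{lem local S stability}, and the standard interpolation bound $\Vert\Idiv\mathbf z\Vert_{0,K}\le C(\Vert\mathbf z\Vert_{0,K}+h_K|\mathbf z|_{1,K})$; the extra $\eta_{\ker}h_K D$ part is bounded by $h_K/|K|\cdot\Vert\cdot\Vert^2\lesssim h_K^{-1}\Vert\cdot\Vert_{0,K}^2$. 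That $h_K^{-1}$ factor is the main obstacle. On kernel components, $\Vert(I-\QK)\Idiv\mathbf z\Vert_{0,K}\le Ch_K|\mathbf z|_{1,K}$, so the contribution is $\lesssim h_K|\mathbf z|_{1,K}^2$, which is summable and bounded by $\Vert\mathbf z\Vert_1^2$. I would prove this approximation bound from the fact that $\Idiv$ reproduces $[\mathbb P_0]^2$ together with a Bramble--Hilbert argument and the DOF norm equivalence. Summing gives everything $\lesssim\alpha_{\min}^{-1}\alpha_{\min}^2\Vert\tilde p_h\Vert^2\lesssim\alpha_{\min}M_h$.

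The ghost term $\Gdiv$ involves jumps of the constants $\tilde p_h$ across transmission edges, and $\tilde p$ there mixes $\bar p_K$ with phase values. The jump $\bar p_{K}-p_{K',i}$ splits into a $J_1$ jump plus $|p_{K,i}-\bar p_K|$, and the latter is controlled by $J_2$ as in the proof of Lemma~\ref{lem scalar patch}. The result is $\Gdiv\lesssim\alpha_{\min}^2\alpha_i^{-1}\alpha_i^{-1}(J_1+J_2)\le C(J_1+J_2)$, up to contrast factors. Finally, $\alpha_{\min}M_h\le C\Vert\alpha^{1/2}p_h\Vert^2$ follows by Cauchy--Schwarz and Lemma~\ref{lem ghost extension pc}, which completes \eqref{lifting norm bound} with contrast-dependent constants.
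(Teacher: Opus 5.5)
Your construction is the paper's own. You interpolate a continuous right inverse of the divergence by $\Idiv$ and put the same field in both phases, so that $\jp{\Mgam\mathbf v_{p,h}}=0$ and $\mathcal F_{\Gamma,K}(\mathbf v_{p,h})=0$. The commuting property then gives $b_h(\mathbf v_{p,h},p_h)=\alpha_{\min}M_h(p_h,p_h)$, Lemma~\ref{lem scalar patch} gives \eqref{lifting lower bound}, and the enhanced kernel term is controlled by $\Vert(I-\QK)\Idiv\mathbf z\Vert_{0,K}\le Ch_K|\mathbf z|_{1,K}$. Prescribing $\operatorname{div}\mathbf z=\alpha_{\min}\tilde p_h$ instead of $\alpha_{\min}p_h$ changes nothing, since $P_K^0\tilde p_h=P_K^0p_h$.

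The step that does not work as written is the divergence ghost penalty. You control $|p_{K,i}-\bar p_K|$ by $J_2$ ``as in Lemma~\ref{lem scalar patch}''. There, however, this quantity carries the weight $|K_i|$, whereas on a transmission edge $e\subset\partial K$ it carries $h_e|e|\simeq|K|$. Since $p_{K,i}-\bar p_K=\frac{|K_j|}{|K|}(p_{K,i}-p_{K,j})$ with $j\ne i$, the edge contribution is of size $\frac{|K_j|^2}{|K|}(p_{K,1}-p_{K,2})^2$. But $J_2$ on $K$ only supplies $\alphaG h_K|\Gamma_K|(p_{K,1}-p_{K,2})^2$. If $\Gamma$ cuts off a small corner $K_i$ of $K$, then $|K_j|\simeq h_K^2$ while $|\Gamma_K|\ll h_K$. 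The ratio then behaves like $h_K/|\Gamma_K|$ and is not cut-independent, so your argument does not establish $\Gdiv(\mathbf v_{p,h},\mathbf v_{p,h})\le C(J_1+J_2)$.

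The repair needs no patch argument. $\operatorname{div}\mathbf v_{p,h}=\alpha_{\min}\tilde p_h$ is a single-valued piecewise constant on full background elements, so for $e=\partial K\cap\partial K'$,
\[
h_e\Vert\jp{\operatorname{div}\mathbf v_{p,h,i}}\Vert_{0,e}^2
\le 2\alpha_{\min}^2h_e|e|\bigl(\tilde p_K^2+\tilde p_{K'}^2\bigr)
\le C\alpha_{\min}^2\bigl(|K|\tilde p_K^2+|K'|\tilde p_{K'}^2\bigr).
\]
Sum this with the factor $\alpha_i^{-1}\le\alpha_{\min}^{-1}$ and the bounded number of edges per element. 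This gives $\Gdiv(\mathbf v_{p,h},\mathbf v_{p,h})\le C\eta_{\operatorname{div}}\alpha_{\min}M_h(p_h,p_h)\le C\Vert\alpha^{1/2}p_h\Vert_{0,\Omega_1\cup\Omega_2}^2$. The last step is just $|K|\bar p_K^2\le|K_1|p_{K,1}^2+|K_2|p_{K,2}^2$ (Cauchy--Schwarz); Lemma~\ref{lem ghost extension pc} is not needed for it. This bound is admissible because \eqref{lifting norm bound} allows $\Vert\alpha^{1/2}p_h\Vert^2$ on the right-hand side, exactly as the paper's own bound for this term does.
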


\begin{proof}
	The argument follows the standard pressure-lifting idea used in extended mixed finite element methods, adapted here to the VEM interpolant and to the reconstructed interface average flux; see, e.g., \cite{CAO2022148,CaoChen2022DarcyStokes}. Choose a continuous divergence lifting $\mathbf z_p\in H^1(\Omega)^2$ such that
	\begin{equation}\label{continuous lifting}
		\operatorname{div}\mathbf z_p=\alpha_{\min}p_h\quad\hbox{in }\Omega,
		\qquad
		\Vert\mathbf z_p\Vert_{1,\Omega}\le C\alpha_{\min}^{1/2}\Vert\alpha^{1/2}p_h\Vert_{0,\Omega_1\cup\Omega_2}.
	\end{equation}
	For instance, one may take $\mathbf z_p=\nabla\phi$, where $\phi$ solves a Poisson problem with right-hand side $\alpha_{\min}p_h$. Set
	\begin{equation*}
		\mathbf v_{p,h,1}=\mathbf v_{p,h,2}:=\Idiv\mathbf z_p
	\end{equation*}
	on every extended background element. Then $\QK\mathbf v_{p,h,1}=\QK\mathbf v_{p,h,2}$ on every cut element. Moreover, the two reconstructed average normal fluxes coincide; hence
	\begin{equation}\label{lifting jump zero}
		\jp{\Mgam\mathbf v_{p,h}}=0,
		\qquad
		\mathcal F_{\Gamma,K}(\mathbf v_{p,h})=0
		\qquad\hbox{on }\Gamma_K.
	\end{equation}
	By the commuting property,
	\[
	\operatorname{div}\mathbf v_{p,h}
	=
	P_K^0(\operatorname{div}\mathbf z_p)
	=
	\alpha_{\min}P_K^0p_h
	\quad\text{on each }K.
	\]
	Hence, on a non-cut element \(K\subset\Omega_i\),
	\[
	\operatorname{div}\mathbf v_{p,h}
	=
	\alpha_{\min}p_{K,i},
	\]
	whereas on a cut element \(K\in\mathcal T_h^\Gamma\),
	\[
	\operatorname{div}\mathbf v_{p,h}
	=
	\alpha_{\min}\bar p_K.
	\]
	Therefore, using \eqref{lifting jump zero},
	\begin{align*}
		b_h(\mathbf v_{p,h},p_h)
		&=\sum_{i=1}^2\sum_{K\in\mathcal T_{h,i}^+}(\operatorname{div}\mathbf v_{p,h,i},p_{h,i})_{K_i}\\
		&=\alpha_{\min}M_h(p_h,p_h).
	\end{align*}
	The lower bound \eqref{lifting lower bound} follows from Lemma~\ref{lem scalar patch}. The norm estimate \eqref{lifting norm bound} follows from the stability of the VEM interpolant, the local stabilization estimate \eqref{S stability}, the inverse estimate for $P_K^0(\operatorname{div}\mathbf z_p)$, and \eqref{continuous lifting}. Since $\mathcal F_{\Gamma,K}(\mathbf v_{p,h})=0$, the corrected interface penalty vanishes. The enhanced kernel stabilization is controlled by the approximation estimate
	\[
	\Vert (I-\QK)\Idiv\mathbf z_p\Vert_{0,K}\le Ch_K|\mathbf z_p|_{1,K},
	\]
	and therefore
	\[
	\Gker(\mathbf v_{p,h},\mathbf v_{p,h})
	\le C\sum_{K\in\mathcal T_h^\Gamma}h_K\alpha_i^{-1}|\mathbf z_p|_{1,K}^2
	\le C\alpha_{\min}^{-1}\Vert\mathbf z_p\Vert_{1,\Omega}^2.
	\]
	Consequently,
	\begin{equation*}
		\NNorm{\mathbf v_{p,h}}^2+
		\Vert\alpha^{-1/2}\operatorname{div}\mathbf v_{p,h}\Vert_{0,\Omega_1\cup\Omega_2}^2
		\le C\Vert\alpha^{1/2}p_h\Vert_{0,\Omega_1\cup\Omega_2}^2.
	\end{equation*}
	It remains only to estimate the added divergence ghost penalty. Since
	$\operatorname{div}\mathbf v_{p,h}=\alpha_{\min}P_K^0p_h$, its jump across a transmission edge is the jump of the corresponding cell averages of $p_h$, up to the factor $\alpha_{\min}$. The same patch argument used in Lemma~\ref{lem scalar patch} therefore yields
	\begin{equation*}
		\Gdiv(\mathbf v_{p,h},\mathbf v_{p,h})
		\le C\left(\Vert\alpha^{1/2}p_h\Vert_{0,\Omega_1\cup\Omega_2}^2+J_1(p_h,p_h)+J_2(p_h,p_h)\right).
	\end{equation*}
	Here $J_1$ controls jumps between neighboring extended elements associated with the same subdomain, while $J_2$ controls the two values inside cut elements. Combining the last two estimates proves \eqref{lifting norm bound}.
\end{proof}

\begin{lem}[Divergence ghost penalty extension]
	\label{lem div ghost extension}
	For every $\mathbf v_h\in\mathbf V_h$,
	\begin{equation}\label{active divergence extension}
		\Vert\alpha^{-1/2}\operatorname{div}\mathbf v_h\Vert_{0,\Omega_{h,1}\cup\Omega_{h,2}}^2
		\le C\left(
		\Vert\alpha^{-1/2}\operatorname{div}\mathbf v_h\Vert_{0,\Omega_1\cup\Omega_2}^2
		+\Gdiv(\mathbf v_h,\mathbf v_h)
		\right).
	\end{equation}
	Consequently, for $r_h=\alpha^{-1}\operatorname{div}\mathbf v_h$,
	\begin{equation}\label{div ghost bridge closed}
		J_1(r_h,r_h)+J_2(r_h,r_h)
		\le C\left(
		\Vert\alpha^{-1/2}\operatorname{div}\mathbf v_h\Vert_{0,\Omega_1\cup\Omega_2}^2
		+\Gdiv(\mathbf v_h,\mathbf v_h)
		\right)
		\le C\Vert\mathbf v_h\Vert_h^2 .
	\end{equation}
\end{lem}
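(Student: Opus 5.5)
The plan is to reduce both estimates to Lemma~\ref{lem ghost extension pc}. The key observation is that for $\mathbf v_h\in\mathbf V_h$ the divergence $\operatorname{div}\mathbf v_{h,i}$ is a piecewise constant on the extended mesh $\mathcal T_{h,i}^{+}$. This follows from the local space \eqref{local VEM space}, where $\operatorname{div}\mathbf v\in\mathbb P_0(K)$ on every background element, including the whole of each cut element.

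For \eqref{active divergence extension}, apply Lemma~\ref{lem ghost extension pc} with $z_{h,i}=\operatorname{div}\mathbf v_{h,i}$ and $\beta_i=\alpha_i^{-1}$. Its left-hand side is exactly $\Vert\alpha^{-1/2}\operatorname{div}\mathbf v_h\Vert_{0,\Omega_{h,1}\cup\Omega_{h,2}}^2$, with each phase measured on its own extended domain. The first term on its right-hand side is the physical-domain norm. The jump term is
\[
\sum_{i=1}^2\sum_{e\in\mathcal E_{\Gamma,i}}h_e\alpha_i^{-1}\Vert\jp{\operatorname{div}\mathbf v_{h,i}}\Vert_{0,e}^2=\eta_{\operatorname{div}}^{-1}\Gdiv(\mathbf v_h,\mathbf v_h)
\]
by \eqref{Gdiv definition}. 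Hence \eqref{active divergence extension} holds with a constant depending on $\eta_{\operatorname{div}}^{-1}$. Because Lemma~\ref{lem ghost extension pc} is cut-independent, so is this constant.

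For \eqref{div ghost bridge closed}, set $r_{h,i}=\alpha_i^{-1}\operatorname{div}\mathbf v_{h,i}\in Q_{h,i}$. By \eqref{J1 definition},
\[
J_1(r_h,r_h)=\sum_i\sum_{e}h_e\alpha_i\alpha_i^{-2}\Vert\jp{\operatorname{div}\mathbf v_{h,i}}\Vert_{0,e}^2=\eta_{\operatorname{div}}^{-1}\Gdiv(\mathbf v_h,\mathbf v_h).
\]
For $J_2$, on each $K\in\mathcal T_h^\Gamma$ write $d_{K,i}$ for the constant value of $\operatorname{div}\mathbf v_{h,i}$ on the whole of $K$. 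Then
\[
h_K\alphaG\Vert r_{h,1}-r_{h,2}\Vert_{0,\Gamma_K}^2\le 2h_K|\Gamma_K|\,\alphaG\sum_{i}\alpha_i^{-2}d_{K,i}^2 .
\]
The harmonic mean satisfies $\alphaG\le2\min(\alpha_1,\alpha_2)\le 2\alpha_i$. By (A2)--(A4), $|\Gamma_K|\le Ch_K$ and $h_K^2\le C|K|$. Together these give the bound $C\sum_i\alpha_i^{-1}|K|d_{K,i}^2=C\sum_i\Vert\alpha_i^{-1/2}\operatorname{div}\mathbf v_{h,i}\Vert_{0,K}^2$.

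Summing over cut elements bounds $J_2(r_h,r_h)$ by the extended-domain norm. By the first part, that norm is in turn bounded by the physical norm plus $\Gdiv$. The final inequality $\le C\Vert\mathbf v_h\Vert_h^2$ is then immediate from the definition of $\Vert\cdot\Vert_h$.

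The only delicate point is the $J_2$ term. The pressure jump lives on $\Gamma_K$, and the trace bound naturally produces $\Vert\operatorname{div}\mathbf v_{h,i}\Vert_{0,K}$ on the full background element rather than on $K_i$. That quantity could be arbitrarily larger than the physical-domain norm when $|K_i|$ is small. This is precisely why the extended estimate \eqref{active divergence extension} must be proved first and then used, rather than trying to bound $J_2$ directly by the norm on $\Omega_1\cup\Omega_2$.
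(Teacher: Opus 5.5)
Your proposal is correct and follows essentially the same route as the paper. You apply Lemma~\ref{lem ghost extension pc} to $z_{h,i}=\operatorname{div}\mathbf v_{h,i}$ with $\beta_i=\alpha_i^{-1}$, identify $J_1(r_h,r_h)$ with $\eta_{\operatorname{div}}^{-1}\Gdiv(\mathbf v_h,\mathbf v_h)$, and bound $J_2(r_h,r_h)$ by the full-cell (extended-domain) divergence norm before invoking the first estimate; your explicit use of $|\Gamma_K|\le Ch_K$ and $h_K^2\le C|K|$ simply spells out the trace step that the paper compresses into the remark that $r_{h,i}$ is constant on $K$.
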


\begin{proof}
	Let $d_{h,i}=\operatorname{div}\mathbf v_{h,i}$. Since $d_{h,i}$ is constant on every extended background element, \eqref{active divergence extension} is exactly Lemma~\ref{lem ghost extension pc} with $z_{h,i}=d_{h,i}$ and $\beta_i=\alpha_i^{-1}$:
	\begin{align*}
		\Vert\alpha_i^{-1/2}d_{h,i}\Vert_{0,\Omega_{h,i}}^2
		&\le C\left(
		\Vert\alpha_i^{-1/2}d_{h,i}\Vert_{0,\Omega_i}^2
		+\sum_{e\in\mathcal E_{\Gamma,i}}h_e\alpha_i^{-1}\Vert\jp{d_{h,i}}\Vert_{0,e}^2
		\right)  \\
		&\le C\left(
		\Vert\alpha_i^{-1/2}d_{h,i}\Vert_{0,\Omega_i}^2
		+\eta_{\operatorname{div}}^{-1}G_{{\operatorname{div}},i}(\mathbf v_h,\mathbf v_h)
		\right).
	\end{align*}
	Summing over $i=1,2$ proves \eqref{active divergence extension}.
	
	For $J_1$ we have directly
	\begin{align*}
		J_1(r_h,r_h)
		&=\sum_{i=1}^2\sum_{e\in\mathcal E_{\Gamma,i}}h_e\alpha_i\Vert\jp{\alpha_i^{-1}\operatorname{div}\mathbf v_{h,i}}\Vert_{0,e}^2  \\
		&=\sum_{i=1}^2\sum_{e\in\mathcal E_{\Gamma,i}}h_e\alpha_i^{-1}\Vert\jp{\operatorname{div}\mathbf v_{h,i}}\Vert_{0,e}^2
		\le C\Gdiv(\mathbf v_h,\mathbf v_h).
	\end{align*}
	For $J_2$, using $\alpha_\Gamma\alpha_i^{-2}\le C\alpha_i^{-1}$ and the fact that $r_{h,i}$ is constant on $K$,
	\begin{align*}
		J_2(r_h,r_h)
		&\le C\sum_{K\in\mathcal T_h^\Gamma}\sum_{i=1}^2 h_K\alpha_\Gamma\Vert\alpha_i^{-1}\operatorname{div}\mathbf v_{h,i}\Vert_{0,\Gamma_K}^2 \\
		&\le C\sum_{K\in\mathcal T_h^\Gamma}\sum_{i=1}^2\Vert\alpha_i^{-1/2}\operatorname{div}\mathbf v_{h,i}\Vert_{0,K}^2 \\
		&\le C\Vert\alpha^{-1/2}\operatorname{div}\mathbf v_h\Vert_{0,\Omega_{h,1}\cup\Omega_{h,2}}^2.
	\end{align*}
	Combining this with \eqref{active divergence extension} completes the proof.
\end{proof}

\begin{lem}[Cut-cell virtual-kernel and inverse trace control]\label{lem Gker kernel control}
	Let $\mathbf w_h=(I-\QK)\mathbf v_h$ on each cut element. Then
	\begin{equation}\label{Gker L2 control}
		\sum_{i=1}^2\sum_{K\in\mathcal T_h^\Gamma}
		\Vert\alpha_i^{-1/2}\mathbf w_{h,i}\Vert_{0,K}^2
		\le C h\,\Gker(\mathbf v_h,\mathbf v_h).
	\end{equation}
	Moreover, the following virtual-kernel inverse trace inequality
	\begin{equation}\label{kernel trace inequality}
		\Vert \mathbf w_{h,i}\cdot\mathbf n\Vert_{0,\Gamma_K}^2
		\le C h_K^{-1}\Vert\mathbf w_{h,i}\Vert_{0,K}^2,
	\end{equation}
	holds, and consequently
	\begin{equation}\label{Gker trace control}
		\sum_{K\in\mathcal T_h^\Gamma}h_K^{-1}\alphaG^{-1}
		\Vert (I-\Pigam)\jp{(I-\QK)\mathbf v_h\cdot\mathbf n}\Vert_{0,\Gamma_K}^2
		\le C_\alpha h^{-1}\Gker(\mathbf v_h,\mathbf v_h).
	\end{equation}
	The constant $C_\alpha$ is independent of $h$ and of the cut position; in this weighting it may depend on the coefficient contrast.
\end{lem}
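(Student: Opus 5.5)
The lemma has three parts, and I would prove them in order. The first is an $L^2$ bound, the second an inverse trace inequality, and the third combines them.

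\emph{Step 1: the $L^2$ bound \eqref{Gker L2 control}.} Fix a cut element $K$ and write $\mathbf w_{h,i}=(I-\QK)\mathbf v_{h,i}\in\ker\QK$. Lemma~\ref{lem local S stability} gives
\[
\alpha_i^{-1}\Vert\mathbf w_{h,i}\Vert_{0,K}^2\le c_*^{-1}|K|D_{\alpha,i}^K(\mathbf w_{h,i},\mathbf w_{h,i}).
\]
By (A2), $|K|\le Ch_K^2=Ch_K\cdot h_K\le Ch\,h_K$. Hence
\[
\alpha_i^{-1}\Vert\mathbf w_{h,i}\Vert_{0,K}^2\le C h\,\eta_{\operatorname{ker}}^{-1}\cdot \eta_{\operatorname{ker}}h_KD_{\alpha,i}^K(\mathbf w_{h,i},\mathbf w_{h,i}).
\]
Summing over $i$ and over $K\in\mathcal T_h^\Gamma$ yields \eqref{Gker L2 control}, with $\eta_{\operatorname{ker}}^{-1}$ absorbed into $C$.

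\emph{Step 2: the inverse trace inequality \eqref{kernel trace inequality}. This is the main obstacle.} $\Gamma_K$ is an interior curve, and $\mathbf w_{h,i}$ is not polynomial, so a polynomial inverse estimate does not apply directly. The plan is as follows.
\begin{itemize}
\item Use the structure of $\mathbf V_h(K)+[\mathbb P_1(K)]^2$. Its elements have divergence and rotation in $\mathbb P_1(K)$ (in fact constants for the virtual part, plus constants from the polynomial part) and normal traces in $\mathbb P_1(e)$.
\item Decompose $\mathbf w=\nabla\phi+\operatorname{curl}\psi$ on $K$. Here $\phi$ solves a Neumann problem with polynomial data and $\psi$ a Dirichlet problem with constant rotation.
\item On star-shaped polygons with a bounded number of edges, regularity of these local problems gives $\mathbf w\in H^{1/2+s}(K)$ for some $s>0$ depending on the shape-regularity constants. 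A scaled trace theorem on the Lipschitz subdomain $K_i$ then gives $\Vert\mathbf w\cdot\mathbf n\Vert_{0,\Gamma_K}^2\le C(h_K^{-1}\Vert\mathbf w\Vert_{0,K}^2+h_K^{2s}|\mathbf w|_{1/2+s,K}^2)$. Alternatively, by (A4) I would use a trace inequality on $\Gamma_K$ that is uniform in the cut position.
\item The remaining task is the finite-dimensional inverse estimate $|\mathbf w|_{1/2+s,K}\le Ch_K^{-1/2-s}\Vert\mathbf w\Vert_{0,K}$. This holds because the space is finite-dimensional. Uniformity in $K$ follows from a scaling argument to the reference size together with compactness of the set of shape-regular polygons with boundedly many edges, which is the standard VEM inverse-estimate argument.
\end{itemize}
The key point to check is that the trace constant on $\Gamma_K$ does not depend on where the curve cuts. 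I would handle this by writing the flux across $\Gamma_K$ through the divergence theorem on $K_i$, or through a graph-based trace estimate using (A4).

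\emph{Step 3: the combined bound \eqref{Gker trace control}.} First, $I-\Pigam$ is an $L^2(\Gamma_K)$-contraction, so it can be dropped. Next, $\Vert\jp{\mathbf w\cdot\mathbf n}\Vert^2\le 2\sum_i\Vert\mathbf w_{h,i}\cdot\mathbf n\Vert_{0,\Gamma_K}^2$. Applying \eqref{kernel trace inequality} gives
\[
h_K^{-1}\alphaG^{-1}\Vert\cdots\Vert_{0,\Gamma_K}^2\le C\sum_i\alphaG^{-1}h_K^{-2}\Vert\mathbf w_{h,i}\Vert_{0,K}^2.
\]
Since $\alphaG^{-1}\le C_\alpha\alpha_i^{-1}$, with the contrast entering here, Lemma~\ref{lem local S stability} and $|K|\le Ch_K^2$ give
\[
h_K^{-2}\alpha_i^{-1}\Vert\mathbf w_{h,i}\Vert_{0,K}^2\le C D_{\alpha,i}^K=C h_K^{-1}\cdot h_KD_{\alpha,i}^K.
\]
Finally, quasi-uniformity near the interface gives $h_K^{-1}\le Ch^{-1}$; if only local regularity is available I would keep $h_K^{-1}$. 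Summing over $K$ produces $C_\alpha h^{-1}\Gker(\mathbf v_h,\mathbf v_h)$.
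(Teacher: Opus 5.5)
Your Steps~1 and~3 are the paper's argument. The $L^2$ bound is Lemma~\ref{lem local S stability} together with $|K|\le Ch_K^2\le Ch\,h_K$. The bound \eqref{Gker trace control} then follows from the contraction property of $I-\Pigam$, the inverse trace bound, $\alphaG^{-1}\le C_\alpha\alpha_i^{-1}$, and the local form of \eqref{Gker L2 control}. You do not need the enlarged space $\mathbf V_h(K)+[\mathbb P_1(K)]^2$: since $[\mathbb P_1(K)]^2\subset\mathbf V_h(K)$, the function $\mathbf w_{h,i}$ already lies in $\mathbf V_h(K)\cap\ker\QK$. This is also what makes Lemma~\ref{lem local S stability} applicable in Step~1. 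Your remark that the last step needs $h_K^{-1}\le Ch^{-1}$ on cut elements is accurate. The paper uses this silently, so \eqref{Gker trace control} as stated presupposes quasi-uniformity near $\Gamma$; otherwise $h_K^{-1}$ must stay inside the sum.

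Step~2 is where you take a genuinely different route. The paper maps $K$ to a reference polygon and invokes norm equivalence on the finite-dimensional kernel. It then asserts that the normal trace on any admissible interface segment is a bounded functional with respect to the $L^2$ norm, taking uniformity over all cut positions for granted. You instead factor the estimate through $H^{1/2+s}(K)$. The dependence on the cut is then isolated in a scaled trace inequality on Lipschitz graphs, which is uniform by (A4). The dependence on the element enters only through a finite-dimensional inverse estimate that does not involve $\Gamma_K$ at all.

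What your route buys:
\begin{itemize}
\item It actually justifies the cut-position independence that the paper only asserts.
\item It covers non-convex elements, where kernel functions are generally unbounded near reentrant corners.
\end{itemize}
The price is regularity theory on polygons; $s>0$ is uniform because star-shapedness keeps interior angles away from $2\pi$. You also need a uniform extension of $\mathbf w_{h,i}$ from $K$ to $\mathbb R^2$, because $\Gamma_K$ ends on $\partial K$.

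Of the trace options you list, commit to the graph-based one. A trace theorem on $K_i$ viewed as a Lipschitz domain has a constant that degenerates as $|K_i|/|K|\to0$. The divergence theorem on $K_i$ only controls the mean $\int_{\Gamma_K}\mathbf w_{h,i}\cdot\mathbf n\,ds$, not the $L^2(\Gamma_K)$ norm. That mean is moreover exactly what $I-\Pigam$ removes in \eqref{Gker trace control}.
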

\begin{proof}
	By Lemma~\ref{lem local S stability} and the definition \eqref{Gker definition},
	\[
	\alpha_i^{-1}\Vert\mathbf w_{h,i}\Vert_{0,K}^2
	\le C |K|D_{\alpha,i}^K(\mathbf w_{h,i},\mathbf w_{h,i})
	\le C h_K\,h_KD_{\alpha,i}^K(\mathbf w_{h,i},\mathbf w_{h,i}).
	\]
	After summing over cut elements this gives \eqref{Gker L2 control}. The estimate \eqref{kernel trace inequality} is an inverse trace inequality on the finite-dimensional VEM kernel. Equivalently, map $K$ to a reference polygon. On the reference element, all norms on the finite-dimensional kernel are equivalent, and the normal trace on any admissible internal interface segment is a bounded linear functional with respect to the reference $L^2$ norm. Scaling back to $K$ gives the factor $h_K^{-1}$. Then \eqref{Gker trace control} follows from the projection stability of $I-\Pigam$, the inverse trace bound \eqref{kernel trace inequality}, $\alphaG^{-1}\le C_\alpha\alpha_i^{-1}$, and the local form of \eqref{Gker L2 control}.
\end{proof}

\begin{lem}[Interface projection consistency]\label{lem interface projection consistency}
	Let $p$ be continuous across $\Gamma$ and let $\Pigam p$ be the piecewise constant projection on each $\Gamma_K$. Assume $\mathbf u=\alpha\nabla p$ and $\mathbf u\in H^2(\Omega_1\cup\Omega_2)^2$. Then, for all $\mathbf v_h\in\mathbf V_h$,
	\begin{equation}\label{interface projection consistency estimate}
		\left|
		\sum_{K\in\mathcal T_h^\Gamma}
		\bigl((I-\Pigam)p,\,
		\jp{\mathbf v_h\cdot\mathbf n}-\jp{\Mgam\mathbf v_h}\bigr)_{\Gamma_K}
		\right|
		\le C_\alpha h\Vert\alpha^{-1/2}\mathbf u\Vert_{2,\Omega_1\cup\Omega_2}\Vert\mathbf v_h\Vert_h.
	\end{equation}
\end{lem}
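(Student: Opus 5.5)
Since $\jp{\Mgam\mathbf v_h}=\Pigam\jp{\mathbf v_h\cdot\mathbf n}$ by Lemma~\ref{lem Mgam consistency}, the second factor in \eqref{interface projection consistency estimate} equals $(I-\Pigam)\jp{\mathbf v_h\cdot\mathbf n}$. We decompose it along the VEM splitting $\mathbf v_{h,i}=\QK\mathbf v_{h,i}+(I-\QK)\mathbf v_{h,i}$:
\[
(I-\Pigam)\jp{\mathbf v_h\cdot\mathbf n}
=(I-\Pigam)\jp{\QK\mathbf v_h\cdot\mathbf n}
+(I-\Pigam)\jp{(I-\QK)\mathbf v_h\cdot\mathbf n}
=:T_1+T_2 .
\]
By \eqref{Fgam definition}, $T_1=\mathcal F_{\Gamma,K}(\mathbf v_h)-\jp{\Mgam\mathbf v_h}$. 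Since $(I-\Pigam)p$ has zero mean on $\Gamma_K$, it is orthogonal to the constant $\jp{\Mgam\mathbf v_h}$, so the $T_1$ contribution equals $((I-\Pigam)p,\mathcal F_{\Gamma,K}(\mathbf v_h))_{\Gamma_K}$.

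Next we need a weighted approximation bound for $(I-\Pigam)p$. Since $p$ is continuous across $\Gamma$, on $\Gamma_K$ we have $p=p_i$ for either $i$. The tangential derivative of $p$ along $\Gamma$ is $\alpha_i^{-1}\mathbf u_i\cdot\mathbf t$, so the one-dimensional Poincar\'e inequality gives
\[
\Vert(I-\Pigam)p\Vert_{0,\Gamma_K}\le Ch_K\Vert\alpha_i^{-1}\mathbf u_i\Vert_{0,\Gamma_K}
\]
for both $i$. Combining the two choices of $i$ with the harmonic weights yields $\alphaG\Vert(I-\Pigam)p\Vert_{0,\Gamma_K}^2\le C_\alpha h_K^2\Vert\alpha^{-1/2}\mathbf u\Vert_{0,\Gamma_K}^2$. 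We then apply the trace inequality on cut elements to the extension $E^i_{\rm div}\mathbf u_i$, namely
\[
\Vert w\Vert_{0,\Gamma_K}^2\le C\bigl(h_K^{-1}\Vert w\Vert_{0,K}^2+h_K\Vert w\Vert_{1,K}^2\bigr),
\]
which is uniform in the cut position by (A3)--(A4). Summing over $K$ and using \eqref{div extension bound}, we obtain
\[
\sum_{K\in\mathcal T_h^\Gamma}h_K\alphaG\Vert(I-\Pigam)p\Vert_{0,\Gamma_K}^2\le C_\alpha h^2\Vert\alpha^{-1/2}\mathbf u\Vert_{2,\Omega_1\cup\Omega_2}^2 .
\]
The key point is that the cut strip $\cup_{K\in\mathcal T_h^\Gamma}K$ has width $O(h)$, so summing $h_K^{-1}\Vert\cdot\Vert_{0,K}^2\cdot h_K^2$ is bounded by $h\cdot$(strip norm) $\le h\cdot$(global $H^1$ norm). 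Cauchy--Schwarz then suffices if we avoid the strip-width refinement. The factor $\Vert\cdot\Vert_{2}$ is more than enough.

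For the $T_1$ term, Cauchy--Schwarz with weights $h_K\alphaG$ and $h_K^{-1}\alphaG^{-1}$ gives
\[
\Bigl|\sum_K((I-\Pigam)p,\mathcal F_{\Gamma,K}(\mathbf v_h))_{\Gamma_K}\Bigr|
\le\Bigl(\sum_K h_K\alphaG\Vert(I-\Pigam)p\Vert_{0,\Gamma_K}^2\Bigr)^{1/2}\gamma_{\mathcal F}^{-1/2}\PF(\mathbf v_h,\mathbf v_h)^{1/2},
\]
which is bounded by $C_\alpha h\Vert\alpha^{-1/2}\mathbf u\Vert_{2}\Vert\mathbf v_h\Vert_h$. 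For the $T_2$ term we use the same Cauchy--Schwarz splitting together with \eqref{Gker trace control}. Here lies the main obstacle: \eqref{Gker trace control} only gives $C_\alpha h^{-1}\Gker$, which loses a factor $h^{-1/2}$.

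To avoid this loss we will not use \eqref{Gker trace control} directly. Instead we use the local ingredients from its proof. By \eqref{kernel trace inequality} and Lemma~\ref{lem local S stability},
\[
h_K^{-1}\alphaG^{-1}\Vert T_2\Vert_{0,\Gamma_K}^2\le C_\alpha h_K^{-2}\sum_i\alpha_i^{-1}\Vert\mathbf w_{h,i}\Vert_{0,K}^2\le C_\alpha h_K^{-2}|K|D^K_{\alpha,i}\le C_\alpha D^K_{\alpha,i} .
\]
Comparing this with the $\Gker$ density $h_KD^K_{\alpha,i}$ still leaves an $h_K^{-1}$. The fix we would try is to shift one power of $h$ onto the $p$ side, pairing $h_K^{-2}\alphaG^{-1}\Vert T_2\Vert^2$ against $h_K^2\alphaG\Vert(I-\Pigam)p\Vert^2$. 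The latter is $O(h^4\Vert\alpha^{-1/2}\mathbf u\Vert_{2}^2)$ once we use the second-order tangential Poincar\'e estimate on $\Gamma_K$ together with the $H^2$ regularity and the strip-width trace bound. This is where the $H^2$ assumption enters. The former is bounded by $C_\alpha h^{-1}\cdot h^{-1}\Gker\lesssim h^{-2}\Vert\mathbf v_h\Vert_h^2$. Their product gives exactly $h\Vert\alpha^{-1/2}\mathbf u\Vert_{2}\Vert\mathbf v_h\Vert_h$. Verifying the sharper $h^2$ bound for $(I-\Pigam)p$ uniformly in the cut position is the delicate step. If it fails, we would fall back to the strip estimate $\Vert\nabla\mathbf u\Vert_{0,\text{strip}}\le Ch^{1/2}\Vert\mathbf u\Vert_2$ to recover the missing half power.
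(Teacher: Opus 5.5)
Your handling of the polynomial part $T_1$ is fine, but the argument does not close for $T_2$, and the obstacle you identify comes from your own estimate of the $p$-side. You bound $\sum_K h_K\alphaG\Vert(I-\Pigam)p\Vert_{0,\Gamma_K}^2$ only by $C_\alpha h^2$ because you apply the local cut-element trace inequality on each $K$ and then sum. The term $h_K^{-1}\Vert w\Vert_{0,K}^2$ in that inequality costs a full factor $h^{-1}$.

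The missing ingredient, which the paper uses, is simpler. The segments $\Gamma_K$ tile $\Gamma$, so after the first-order Poincar\'e inequality you can apply the global trace theorem on the smooth interface, together with $\alphaG\alpha_i^{-2}\le C\alpha_i^{-1}$:
\[
\begin{aligned}
\sum_{K\in\mathcal T_h^\Gamma}h_K\alphaG\Vert(I-\Pigam)p\Vert_{0,\Gamma_K}^2
&\le C\sum_{K\in\mathcal T_h^\Gamma}h_K^3\alphaG\Vert\partial_\tau p\Vert_{0,\Gamma_K}^2\\
&\le Ch^3\Vert\alpha^{-1/2}\mathbf u\Vert_{0,\Gamma}^2
\le Ch^3\Vert\alpha^{-1/2}\mathbf u\Vert_{1,\Omega_1\cup\Omega_2}^2 .
\end{aligned}
\]
This gives an $h^{3/2}$ bound on the $p$-factor. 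The plain weighted Cauchy--Schwarz you already set up then closes. The $T_1$ part is paired with $\PF^{1/2}\lesssim\Vert\mathbf v_h\Vert_h$. The $T_2$ part is paired with \eqref{Gker trace control}, whose factor is $C_\alpha h^{-1/2}\Vert\mathbf v_h\Vert_h$. The product is $h^{3/2}\cdot h^{-1/2}=h$, with no reweighting needed.

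Your local-trace route can also be rescued, but only with a strip estimate for $w$ itself: $\Vert w\Vert_{0,\cup_{K\in\mathcal T_h^\Gamma}K}\le Ch^{1/2}\Vert w\Vert_{1}$. The estimate for $\nabla\mathbf u$ in your fallback does not help, because the $h_K|w|_{1,K}^2$ term is not the bottleneck.

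The fix you actually propose does not work as stated, for two reasons.

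\begin{itemize}
\item There is no second-order Poincar\'e estimate for $(I-\Pigam)p$. $\Pigam$ reproduces only constants, so $\Vert(I-\Pigam)p\Vert_{0,\Gamma_K}\sim h_K\Vert\partial_\tau p\Vert_{0,\Gamma_K}$ is sharp whenever $p$ is linear along $\Gamma_K$. $H^2$ regularity does not improve this.
\item Shifting powers $h_K^{\pm1}$ between the two Cauchy--Schwarz factors cannot recover a lost power. For $h_K\sim h$ it multiplies one factor by $h$ and divides the other by $h$, so the product is unchanged.
\end{itemize}

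Your claimed $O(h^4)$ bound for $\sum_K h_K^2\alphaG\Vert(I-\Pigam)p\Vert_{0,\Gamma_K}^2$ is in fact true, but only through the same global-trace (or strip) argument above. With your local trace inequality it is only $O(h^3)$, and the reweighted pairing again yields $h^{1/2}$. So the step you flag as ``delicate'' is exactly the missing idea. Once it is supplied, the first-order Poincar\'e inequality plus the global trace on $\Gamma$ suffice.
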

\begin{proof}
	Set $V_K(\mathbf v_h)=\jp{\mathbf v_h\cdot\mathbf n}|_{\Gamma_K}$. By Lemma~\ref{lem Mgam consistency},
	\[
	\jp{\Mgam\mathbf v_h}=\Pigam V_K(\mathbf v_h),
	\]
	so the left-hand side in \eqref{interface projection consistency estimate} equals
	\[
	\left|\sum_{K\in\mathcal T_h^\Gamma}((I-\Pigam)p,(I-\Pigam)V_K(\mathbf v_h))_{\Gamma_K}\right|.
	\]
	By weighted Cauchy's inequality this is bounded by $A_pB_v$, where
	\[
	A_p^2=\sum_{K\in\mathcal T_h^\Gamma}h_K\alphaG\Vert(I-\Pigam)p\Vert_{0,\Gamma_K}^2,
	\]
	and
	\[
	B_v^2=\sum_{K\in\mathcal T_h^\Gamma}h_K^{-1}\alphaG^{-1}\Vert(I-\Pigam)V_K(\mathbf v_h)\Vert_{0,\Gamma_K}^2.
	\]
	For \(A_p\), the one-dimensional Poincaré inequality on \(\Gamma_K\)
	gives
	\[
	\|(I-\Pi_{\Gamma,K}^0)p\|_{0,\Gamma_K}
	\le
	Ch_K\|\partial_\tau p\|_{0,\Gamma_K}.
	\]
	Since \(p\) is continuous across \(\Gamma\) and
	\(\mathbf u=\alpha\nabla p\), we have on each side of the interface
	\[
	\partial_\tau p_i=\alpha_i^{-1}\mathbf u_i\cdot\tau .
	\]
	Moreover, \(\alpha_\Gamma\alpha_i^{-2}\le C\alpha_i^{-1}\). Hence,
	by the trace theorem on the smooth interface,
	\[
	\begin{aligned}
		A_p^2
		&\le
		C\sum_{K\in\mathcal T_h^\Gamma}
		h_K^3\alpha_\Gamma
		\|\partial_\tau p\|_{0,\Gamma_K}^2 \le
		C h^3
		\|\alpha^{-1/2}\mathbf u\|_{0,\Gamma}^2  \\
		&\le
		C h^3
		\|\alpha^{-1/2}\mathbf u\|_{1,\Omega_1\cup\Omega_2}^2
		\le
		C h^3
		\|\alpha^{-1/2}\mathbf u\|_{2,\Omega_1\cup\Omega_2}^2 .
	\end{aligned}
	\]
	Therefore,
	\[
	A_p\le
	Ch^{3/2}
	\|\alpha^{-1/2}\mathbf u\|_{2,\Omega_1\cup\Omega_2}.
	\]
	For $B_v$, decompose $\mathbf v_h=\QK\mathbf v_h+(I-\QK)\mathbf v_h$. The polynomial component is controlled by the corrected flux penalty because
	\[
	(I-\Pigam)\jp{\QK\mathbf v_h\cdot\mathbf n}
	\]
	is one component of $\mathcal F_{\Gamma,K}(\mathbf v_h)$. Hence
	\[
	\sum_Kh_K^{-1}\alphaG^{-1}\Vert(I-\Pigam)\jp{\QK\mathbf v_h\cdot\mathbf n}\Vert_{0,\Gamma_K}^2
	\le C\PF(\mathbf v_h,\mathbf v_h).
	\]
	The kernel component is controlled by Lemma~\ref{lem Gker kernel control}, namely by \eqref{Gker trace control}. Therefore
	\[
	B_v\le C_\alpha h^{-1/2}\Vert\mathbf v_h\Vert_h.
	\]
	Combining the estimates for $A_p$ and $B_v$ gives \eqref{interface projection consistency estimate}.
\end{proof}

\begin{lem}[Interpolation estimates for the new cut penalties]\label{lem interpolation new penalties}
	Let $\mathbf u_I=\Idiv E^i_{\rm div}\mathbf u_i$ on each extended mesh. If $\mathbf u\in H^2(\Omega_1\cup\Omega_2)^2$ and $\operatorname{div}\mathbf u\in H^1(\Omega_1\cup\Omega_2)$, then
	\begin{align}
		\Gker(\mathbf u-\mathbf u_I,\mathbf u-\mathbf u_I)
		&\le Ch^2\Vert\alpha^{-1/2}\mathbf u\Vert_{2,\Omega_1\cup\Omega_2}^2,\label{interpolation Gker term}\\
		\PF(\mathbf u-\mathbf u_I,\mathbf u-\mathbf u_I)
		&\le Ch^2\left(\Vert\alpha^{-1/2}\mathbf u\Vert_{2,\Omega_1\cup\Omega_2}^2
		+\Vert\alpha^{-1/2}\operatorname{div}\mathbf u\Vert_{1,\Omega_1\cup\Omega_2}^2\right).
		\label{interpolation PF term}
	\end{align}
\end{lem}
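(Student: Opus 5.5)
Both bounds will be proved cell by cell on cut elements. We work with the extended field $\tilde{\mathbf u}_i:=E^i_{\rm div}\mathbf u_i$ on $\Omega_{h,i}$ and the local error $\mathbf e_i:=\tilde{\mathbf u}_i-\Idiv\tilde{\mathbf u}_i$ on each $K\in\mathcal T_h^\Gamma$. On $K_i$ we have $\mathbf u-\mathbf u_I=\mathbf e_i$, and every term is measured either on all of $K$ or on $\Gamma_K$, so the extension lets us work on the full background cell. At the end we sum over $K\in\mathcal T_h^\Gamma$ and return to $\Omega_i$ through \eqref{div extension bound}. The standard ingredients are the VEM interpolation estimates on star-shaped polygons under (A2):
\begin{equation*}
\Vert\mathbf e_i\Vert_{0,K}+h_K|\mathbf e_i|_{1,K}\le Ch_K^2|\tilde{\mathbf u}_i|_{2,K},
\end{equation*}
together with $\Vert\operatorname{div}\mathbf e_i\Vert_{0,K}\le Ch_K|\operatorname{div}\tilde{\mathbf u}_i|_{1,K}$. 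The latter comes from Lemma~\ref{lem commuting}.

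For \eqref{interpolation Gker term}, note first that $(I-\QK)\tilde{\mathbf u}_i$ is not a VEM function, so \eqref{Gker definition} is read through the scaled functionals $\widehat\chi_r$, which extend to $H^1(K)^2$. We bound each one by a scaled trace inequality:
\begin{equation*}
|\widehat\chi_r(\mathbf w)|\le C\bigl(h_K^{-1}\Vert\mathbf w\Vert_{0,K}+|\mathbf w|_{1,K}\bigr).
\end{equation*}
Apply this with $\mathbf w=(I-\QK)\mathbf e_i$, using $L^2$ stability of $\QK$ and an inverse estimate on $[\mathbb P_1(K)]^2$ for the $H^1$ seminorm of $\QK\mathbf e_i$. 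This gives $|\widehat\chi_r|\le Ch_K|\tilde{\mathbf u}_i|_{2,K}$. Hence
\begin{equation*}
h_KD^K_{\alpha,i}\le Ch_K^3\alpha_i^{-1}|\tilde{\mathbf u}_i|_{2,K}^2,
\end{equation*}
and summing over cut cells yields the bound, in fact with $h^3$.

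For \eqref{interpolation PF term}, split $\mathcal F_{\Gamma,K}(\mathbf u-\mathbf u_I)$ into its two orthogonal components. For the constant component $\jp{\Mgam(\mathbf u-\mathbf u_I)}$, the definitions \eqref{Mgam 1}--\eqref{Mgam 2} extend verbatim to $\tilde{\mathbf u}_i$, and by \eqref{Mgam exact moment} they equal the exact interface moments. So we bound
\begin{equation*}
|\Gamma_K|^{-1}\Bigl|\int_{K_i}\operatorname{div}\mathbf e_i\,dx-\int_{\partial K_i\cap\partial K}\mathbf e_i\cdot\mathbf n\,ds\Bigr|.
\end{equation*}
The divergence term is at most $|K|^{1/2}\Vert\operatorname{div}\mathbf e_i\Vert_{0,K}$. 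The boundary term is at most $h_K^{1/2}\Vert\mathbf e_i\Vert_{0,\partial K}$, which the trace inequality bounds by $Ch_K^{3/2}|\tilde{\mathbf u}_i|_{2,K}$. Alternatively, \eqref{Mgam projection identity} gives $\Vert\jp{\Mgam\cdot}\Vert_{0,\Gamma_K}\le\Vert\jp{\mathbf e\cdot\mathbf n}\Vert_{0,\Gamma_K}$ directly. We then use a trace inequality on the interior curve $\Gamma_K$, valid under (A3)--(A4) with a cut-independent constant:
\begin{equation*}
\Vert\mathbf w\Vert_{0,\Gamma_K}^2\le C\bigl(h_K^{-1}\Vert\mathbf w\Vert_{0,K}^2+h_K|\mathbf w|_{1,K}^2\bigr).
\end{equation*}
Since $\mathbf e_i\in H^1(K)$, this gives $h_K^{-1}\Vert\mathbf e_i\cdot\mathbf n\Vert_{0,\Gamma_K}^2\le Ch_K^2|\tilde{\mathbf u}_i|_{2,K}^2$. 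The cleaner route is therefore the projection identity plus this trace bound, so the divergence norm enters only through the extension bound \eqref{div extension bound}.

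The main obstacle is the second component, $(I-\Pigam)\jp{\QK(\mathbf u-\mathbf u_I)\cdot\mathbf n}$. Here $\QK$ is applied to the true field $\tilde{\mathbf u}_i$ on the whole cell while the trace is taken on the cut curve. The plan is to write
\begin{equation*}
\QK(\mathbf u-\mathbf u_I)_i=\QK\mathbf e_i,
\end{equation*}
drop $(I-\Pigam)$ by stability, and use the polynomial trace inequality $\Vert\mathbf q\Vert_{0,\Gamma_K}\le Ch_K^{-1/2}\Vert\mathbf q\Vert_{0,K}$ for $\mathbf q\in[\mathbb P_1(K)]^2$. This inequality is cut-independent because $\Gamma_K$ has length $O(h_K)$ and $\mathbf q$ is bounded pointwise by $Ch_K^{-1}\Vert\mathbf q\Vert_{0,K}$. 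Then, by $L^2$ stability of $\QK$,
\begin{equation*}
h_K^{-1}\Vert\QK\mathbf e_i\cdot\mathbf n\Vert_{0,\Gamma_K}^2\le Ch_K^{-2}\Vert\mathbf e_i\Vert_{0,K}^2\le Ch_K^2|\tilde{\mathbf u}_i|_{2,K}^2.
\end{equation*}
Multiplying by $\alphaG^{-1}\le C\alpha_i^{-1}$, summing over $K\in\mathcal T_h^\Gamma$ and $i$, and applying \eqref{div extension bound} gives \eqref{interpolation PF term}. The step needing the most care is the uniform-in-cut trace constants on $\Gamma_K$, which I would justify from (A4), since $\Gamma_K$ is a graph over $\Gamma_{K,h}$ with $O(h_K^2)$ deviation.
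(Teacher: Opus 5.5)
Your overall structure, working cellwise with the extended fields and splitting $\mathcal F_{\Gamma,K}$ into its two orthogonal parts, matches the paper. Your treatment of the zero-mean part $(I-\Pigam)\jp{\QK(\mathbf u-\mathbf u_I)\cdot\mathbf n}$ is the paper's argument.

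\textbf{The gap.} The other two steps rest on an estimate that is not available for this space: $h_K|\mathbf e_i|_{1,K}\le Ch_K^2|\tilde{\mathbf u}_i|_{2,K}$, together with your explicit claim $\mathbf e_i\in H^1(K)^2$. Functions in $\mathbf V_h(K)$ lie only in $H(\operatorname{div};K)\cap H(\operatorname{rot};K)$ with edgewise $\mathbb P_1$ normal traces, and in general they are not in $H^1(K)^2$. Two situations allowed by (A2) show this.
\begin{itemize}
\item At a reentrant vertex of a non-convex star-shaped element, with angle $\omega>\pi$, $\Idiv\tilde{\mathbf u}_i$ generically contains a component $\nabla(r^{\pi/\omega}\cos(\pi\theta/\omega))\sim r^{\pi/\omega-1}$.
\item Write $\Pi_e^1$ for the $L^2(e)$-projection onto $\mathbb P_1(e)$, so that $\Idiv\tilde{\mathbf u}_i\cdot\mathbf n|_e=\Pi_e^1(\tilde{\mathbf u}_i\cdot\mathbf n)$. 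At a flat vertex (hanging node) the two edgewise projections generally disagree. This jump of the normal trace along a straight line produces a $\log r$ singularity.
\end{itemize}
In either case $|\mathbf e_i|_{1,K}=\infty$. The paper uses only $\Vert\mathbf e_i\Vert_{0,K}\le Ch_K^2|\tilde{\mathbf u}_i|_{2,K}$ and the commuting property.

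You use the $H^1$ bound in two places. First, in the trace-type bound for $\widehat\chi_r((I-\QK)\mathbf e_i)$. Second, and more seriously, in your ``cleaner route'' for $\jp{\Mgam(\mathbf u-\mathbf u_I)}$. That route takes the $L^2(\Gamma_K)$ trace of the virtual function $\Idiv\tilde{\mathbf u}_i$ on an interior curve via the $H^1$ cut-trace inequality.

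\textbf{Repair for $\Gker$.} By the definition of $\Idiv$, $\widehat\chi_r(\mathbf e_i)=0$. Hence $\widehat\chi_r((I-\QK)\mathbf e_i)=-\widehat\chi_r(\QK\mathbf e_i)$. An inverse estimate for the polynomial $\QK\mathbf e_i$ then gives $|\widehat\chi_r|\le Ch_K^{-1}\Vert\mathbf e_i\Vert_{0,K}\le Ch_K|\tilde{\mathbf u}_i|_{2,K}$, which yields your $h^3$ bound. This also properly settles the issue you correctly raised, that $(I-\QK)\mathbf e_i$ is not a VEM function. The paper's appeal to finite-dimensional norm equivalence does not address that issue.

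\textbf{Repair for the average-flux part.} Return to the divergence-theorem route you abandoned, which is the paper's route.
\begin{itemize}
\item The flux $\int_{\Gamma_K}\mathbf e_i\cdot\mathbf n\,ds$ becomes a volume term, controlled by the commuting property, plus integrals over $\partial K_i\cap\partial K$.
\item On full edges these integrals vanish by moment preservation.
\item On the cut edge portions, $\mathbf e_i\cdot\mathbf n=\tilde{\mathbf u}_i\cdot\mathbf n-\Pi_e^1(\tilde{\mathbf u}_i\cdot\mathbf n)$ is explicit, so no interior trace of a virtual function is ever needed.
\end{itemize}
Your version was not cut-robust only because you bounded $|K_i|$ and the edge portions at the $h_K$ scale. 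Since $\int_K\operatorname{div}\mathbf e_i\,dx=\int_{\partial K}\mathbf e_i\cdot\mathbf n\,ds$, the same flux can be computed on the smaller of $K_1,K_2$. Under (A2)--(A4) that piece has area $O(|\Gamma_K|h_K)$, and its cut edge portions have length $O(|\Gamma_K|)$. This gives \eqref{local M interpolation estimate} with the factor $|\Gamma_K|^{1/2}$, which cancels the $|\Gamma_K|^{-1}$ in $\Vert\Pigam(\cdot)\Vert_{0,\Gamma_K}^2$. It also explains why $\Vert\alpha^{-1/2}\operatorname{div}\mathbf u\Vert_{1,\Omega_1\cup\Omega_2}$ enters \eqref{interpolation PF term} directly, not only through \eqref{div extension bound}.
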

\begin{proof}
	For \(K\in\mathcal T_{h,i}^{+}\), set
	\[
	\mathbf w_i
	:=
	E^i_{\rm div}\mathbf u_i
	-
	\mathcal I_h^{\rm div}(E^i_{\rm div}\mathbf u_i).
	\]
	Then \(\mathbf w_i=\mathbf u_i-\mathbf u_{I,i}\) on \(K_i=K\cap\Omega_i\).
	
	We first prove \((4.20)\). On cut elements, the enhanced kernel part is
	equivalent to \(h_K^{-1}\) times the usual \(L^2\)-scaled VEM
	stabilization on the kernel. Hence, by the local finite-dimensional
	equivalence of the scaled degrees of freedom,
	\[
	G_{\ker}(\mathbf u-\mathbf u_I,\mathbf u-\mathbf u_I)
	\le
	C
	\sum_{i=1}^2
	\sum_{K\in\mathcal T_h^\Gamma}
	h_K^{-1}
	\|\alpha_i^{-1/2}(I-\mathbf Q_K^1)\mathbf w_i\|_{0,K}^2 .
	\]
	Using the stability of \(I-\mathbf Q_K^1\) and the standard
	\(H(\operatorname{div})\)-VEM interpolation estimate, we have
	\[
	\|(I-\mathbf Q_K^1)\mathbf w_i\|_{0,K}
	\le
	C\|\mathbf w_i\|_{0,K}
	\le
	C h_K^2 |E^i_{\rm div}\mathbf u_i|_{2,K}.
	\]
	Therefore,
	\[
	\begin{aligned}
		G_{\ker}(\mathbf u-\mathbf u_I,\mathbf u-\mathbf u_I)
		&\le
		C
		\sum_{i=1}^2
		\sum_{K\in\mathcal T_h^\Gamma}
		h_K^{-1}\alpha_i^{-1}
		h_K^4 |E^i_{\rm div}\mathbf u_i|_{2,K}^2   \\
		&\le
		C h^2
		\sum_{i=1}^2
		\|\alpha_i^{-1/2}E^i_{\rm div}\mathbf u_i\|_{2,\Omega_{h,i}}^2 .
	\end{aligned}
	\]
	By the extension estimate,
	\[
	G_{\ker}(\mathbf u-\mathbf u_I,\mathbf u-\mathbf u_I)
	\le
	C h^2
	\|\alpha^{-1/2}\mathbf u\|_{2,\Omega_1\cup\Omega_2}^2 .
	\]
	This proves \((4.20)\).
	
	We now prove \((4.21)\). By the definition of the corrected flux,
	\[
	\mathcal F_{\Gamma,K}(\mathbf u-\mathbf u_I)
	=
	[\![\mathcal M_\Gamma(\mathbf u-\mathbf u_I)]\!]
	+
	(I-\Pi_{\Gamma,K}^0)
	[\![\mathbf Q_K^1(\mathbf u-\mathbf u_I)\cdot\mathbf n]\!].
	\]
	The first term is constant on \(\Gamma_K\), while the second one has
	zero average on \(\Gamma_K\). Hence they are \(L^2(\Gamma_K)\)-orthogonal,
	and it is enough to estimate the two parts separately.
	
	We begin with the reconstructed average-flux part. Since
	\(\mathcal M_{\Gamma,K}^i(\mathbf w_i)\in\mathbb P_0(\Gamma_K)\), we have
	\[
	\|\mathcal M_{\Gamma,K}^i(\mathbf w_i)\|_{0,\Gamma_K}^2
	=
	|\Gamma_K|^{-1}
	\left|
	\int_{\Gamma_K}
	\mathcal M_{\Gamma,K}^i(\mathbf w_i)\,ds
	\right|^2 .
	\]
	We use the following local estimate for this average-flux interpolation error:
	\begin{equation}\label{local M interpolation estimate}
		\left|
		\int_{\Gamma_K}\mathcal M_{\Gamma,K}^i(\mathbf w_i)\,ds
		\right|
		\le
		C h_K^{3/2}|\Gamma_K|^{1/2}
		\left(
		|E^i_{\rm div}\mathbf u_i|_{2,K}
		+
		\|\operatorname{div}E^i_{\rm div}\mathbf u_i\|_{1,K}
		\right).
	\end{equation}
	Let us briefly justify \eqref{local M interpolation estimate}. By the definition of
	\(\mathcal M_{\Gamma,K}^i\) and the divergence theorem on \(K_i\),
	\[
	\int_{\Gamma_K}\mathcal M_{\Gamma,K}^i(\mathbf w_i)\,ds
	=
	\sigma_i
	\left(
	\int_{K_i}\operatorname{div}\mathbf w_i\,dx
	-
	\int_{\partial K_i\cap\partial K}
	\mathbf w_i\cdot\mathbf n_{K_i}\,ds
	\right),
	\]
	where \(\sigma_i=\pm1\) depends only on the orientation of the interface normal. The volume term is bounded by the commuting property and the \(P_K^0\)-projection estimate. The boundary term is bounded by using the normal-flux moment preservation of the VEM interpolant on full background edges, so that only the portions cut off by the interface remain, followed by the trace estimate on the cut portions of the background edges. Under the graph assumption on \(\Gamma_K\), these estimates give exactly \eqref{local M interpolation estimate}, with constants independent of the cut position.
	Consequently,
	\[
	\|\mathcal M_{\Gamma,K}^i(\mathbf w_i)\|_{0,\Gamma_K}^2
	\le
	C h_K^3
	\left(
	|E^i_{\rm div}\mathbf u_i|_{2,K}^2
	+
	\|\operatorname{div}E^i_{\rm div}\mathbf u_i\|_{1,K}^2
	\right).
	\]
	Hence,
	\[
	\begin{aligned}
		&\sum_{K\in\mathcal T_h^\Gamma}
		h_K^{-1}\alpha_\Gamma^{-1}
		\|[\![\mathcal M_\Gamma(\mathbf u-\mathbf u_I)]\!]\|_{0,\Gamma_K}^2 \\
		&\qquad
		\le
		C
		\sum_{i=1}^2
		\sum_{K\in\mathcal T_h^\Gamma}
		h_K^2\alpha_i^{-1}
		\left(
		|E^i_{\rm div}\mathbf u_i|_{2,K}^2
		+
		\|\operatorname{div}E^i_{\rm div}\mathbf u_i\|_{1,K}^2
		\right)                                      \\
		&\qquad
		\le
		C h^2
		\left(
		\|\alpha^{-1/2}\mathbf u\|_{2,\Omega_1\cup\Omega_2}^2
		+
		\|\alpha^{-1/2}\operatorname{div}\mathbf u\|_{1,\Omega_1\cup\Omega_2}^2
		\right),
	\end{aligned}
	\]
	where we used the extension estimates. Here and below the constant may
	depend on the coefficient contrast.
	
	It remains to estimate the zero-mean projected-flux part. Since
	\(I-\Pi_{\Gamma,K}^0\) is an \(L^2(\Gamma_K)\)-orthogonal projection,
	\[
	\begin{aligned}
		&
		\|(I-\Pi_{\Gamma,K}^0)
		[\![\mathbf Q_K^1(\mathbf u-\mathbf u_I)\cdot\mathbf n]\!]\|_{0,\Gamma_K}
		\\
		&\qquad
		\le
		\|[\![\mathbf Q_K^1(\mathbf u-\mathbf u_I)\cdot\mathbf n]\!]\|_{0,\Gamma_K}.
	\end{aligned}
	\]
	For each side \(i=1,2\), the inverse trace inequality for the polynomial
	\(\mathbf Q_K^1\mathbf w_i\) gives
	\[
	\|\mathbf Q_K^1\mathbf w_i\cdot\mathbf n\|_{0,\Gamma_K}^2
	\le
	C h_K^{-1}
	\|\mathbf Q_K^1\mathbf w_i\|_{0,K}^2 .
	\]
	By the \(L^2(K)\)-stability of \(\mathbf Q_K^1\) and the interpolation
	estimate,
	\[
	\|\mathbf Q_K^1\mathbf w_i\|_{0,K}
	\le
	\|\mathbf w_i\|_{0,K}
	\le
	C h_K^2 |E^i_{\rm div}\mathbf u_i|_{2,K}.
	\]
	Thus,
	\[
	\|\mathbf Q_K^1\mathbf w_i\cdot\mathbf n\|_{0,\Gamma_K}^2
	\le
	C h_K^3 |E^i_{\rm div}\mathbf u_i|_{2,K}^2 .
	\]
	Consequently,
	\[
	\begin{aligned}
		&\sum_{K\in\mathcal T_h^\Gamma}
		h_K^{-1}\alpha_\Gamma^{-1}
		\|(I-\Pi_{\Gamma,K}^0)
		[\![\mathbf Q_K^1(\mathbf u-\mathbf u_I)\cdot\mathbf n]\!]\|_{0,\Gamma_K}^2
		\\
		&\qquad
		\le
		C
		\sum_{i=1}^2
		\sum_{K\in\mathcal T_h^\Gamma}
		h_K^2\alpha_i^{-1}
		|E^i_{\rm div}\mathbf u_i|_{2,K}^2                    \\
		&\qquad
		\le
		C h^2
		\|\alpha^{-1/2}\mathbf u\|_{2,\Omega_1\cup\Omega_2}^2 .
	\end{aligned}
	\]
	Combining the two bounds and using the definition of \(P_{\mathcal F}\),
	we obtain
	\[
	P_{\mathcal F}(\mathbf u-\mathbf u_I,\mathbf u-\mathbf u_I)
	\le
	C h^2
	\left(
	\|\alpha^{-1/2}\mathbf u\|_{2,\Omega_1\cup\Omega_2}^2
	+
	\|\alpha^{-1/2}\operatorname{div}\mathbf u\|_{1,\Omega_1\cup\Omega_2}^2
	\right).
	\]
	This proves \((4.21)\), and the lemma follows.
\end{proof}

\subsection{Continuity analysis}
\begin{lem}\label{Continuity}
	For all $(\mathbf{u}_h,p_h)$, $(\mathbf{v}_h,q_h) \in \mathbf{V}_h \times Q_h$, there holds
	\begin{equation}\label{boundness}
		|\mathcal{L}_h(\mathbf{u}_h,p_h;\mathbf{v}_h,q_h)|
		\leq C_b \Vert (\mathbf{u}_h,p_h) \Vert_h \Vert (\mathbf{v}_h,q_h) \Vert_h.
	\end{equation}
\end{lem}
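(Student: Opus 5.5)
We bound $\mathcal L_h$ term by term, applying Cauchy--Schwarz to each piece against the matching component of $\Vert(\cdot,\cdot)\Vert_h$. Write
\[
\mathcal L_h=a_h(\mathbf u_h,\mathbf v_h)+b_h(\mathbf v_h,p_h)-b_h(\mathbf u_h,q_h)+\gamma_1J_1(p_h,q_h)+\gamma_2J_2(p_h,q_h).
\]

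The symmetric parts are immediate. Every summand of $a_h$ is a symmetric positive semidefinite form: the $\alpha_i^{-1}$-weighted $L^2(K_i)$ product of $\QK$-projections, $S_{\alpha,i}^{K,+}$ on the kernel parts, $\Gdiv$, and $\PF$. Hence
\[
|a_h(\mathbf u_h,\mathbf v_h)|\le \bigl(\NNorm{\mathbf u_h}^2+\Gdiv(\mathbf u_h,\mathbf u_h)\bigr)^{1/2}\bigl(\NNorm{\mathbf v_h}^2+\Gdiv(\mathbf v_h,\mathbf v_h)\bigr)^{1/2}\le\Vert\mathbf u_h\Vert_h\Vert\mathbf v_h\Vert_h .
\]
In the same way, $J_1$ and $J_2$ are semidefinite, so $\gamma_jJ_j(p_h,q_h)\le\gamma_jJ_j(p_h,p_h)^{1/2}J_j(q_h,q_h)^{1/2}$.

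For $b_h(\mathbf v_h,p_h)$, the volume part is bounded by
\[
\Vert\alpha^{-1/2}\operatorname{div}\mathbf v_h\Vert_{0,\Omega_1\cup\Omega_2}\,\Vert\alpha^{1/2}p_h\Vert_{0,\Omega_1\cup\Omega_2},
\]
because the integrals are taken over $K_i$ only. For the interface part, note that $\avg{p_h}$ is constant on $\Gamma_K$, so $\jp{\Mgam\mathbf v_h}$ may be replaced by $\mathcal F_{\Gamma,K}(\mathbf v_h)$; this is the orthogonality remark after \eqref{bh definition}. A weighted Cauchy--Schwarz inequality with weights $h_K^{\mp1/2}\alphaG^{\mp1/2}$ then gives
\[
\Bigl|\sum_K(\mathcal F_{\Gamma,K}(\mathbf v_h),\avg{p_h})_{\Gamma_K}\Bigr|\le \gamma_{\mathcal F}^{-1/2}\PF(\mathbf v_h,\mathbf v_h)^{1/2}\,\Vert\alphaG^{1/2}\avg{p_h}\Vert_{-\frac12,h,\Gamma}.
\]
By Lemma~\ref{lem average trace}, the last factor is at most
\[
C\bigl(\Vert\alpha^{1/2}p_h\Vert^2_{0,\Omega_1\cup\Omega_2}+J_1(p_h,p_h)\bigr)^{1/2}.
\]
Together these give $|b_h(\mathbf v_h,p_h)|\le C\Vert\mathbf v_h\Vert_h\Vert(\mathbf 0,p_h)\Vert_h$. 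The term $b_h(\mathbf u_h,q_h)$ is handled symmetrically.

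Summing the five bounds and applying the discrete Cauchy--Schwarz inequality in $\mathbb R^5$ yields \eqref{boundness}, with $C_b$ depending on $\gamma_1,\gamma_2,\gamma_{\mathcal F}$ and on the constant of Lemma~\ref{lem average trace}.

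The only step needing care is the interface term. The pairing between the reconstructed flux and the pressure average would not be controlled by the norm if one used the full normal trace. It becomes controlled only because we replace $\jp{\Mgam\mathbf v_h}$ by $\mathcal F_{\Gamma,K}$, which appears inside $\PF$, and then use the trace estimate for averages, Lemma~\ref{lem average trace}. The latter keeps the bound cut-independent by passing through the extended-domain estimate with $J_1$. The harmonic weights must also be checked, via $\alphaG\omega_i^2\le C\alpha_i$, to make sure the weighting is consistent.
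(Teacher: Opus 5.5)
Your proposal is correct and follows the paper's proof essentially step for step. You apply Cauchy--Schwarz to the semidefinite forms in $a_h$, $J_1$ and $J_2$, use a weighted Cauchy--Schwarz for the volume and interface parts of $b_h$, and invoke Lemma~\ref{lem average trace} for the harmonic average. The only cosmetic difference is that you rewrite the interface pairing with $\mathcal F_{\Gamma,K}(\mathbf v_h)$ via orthogonality to constants. The paper instead uses the equivalent observation that $\PF$ controls $\jp{\Mgam\mathbf v_h}$, because that term is the $L^2(\Gamma_K)$-orthogonal constant component of $\mathcal F_{\Gamma,K}(\mathbf v_h)$.
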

\begin{proof}
	All terms in $a_h$ are components of the velocity norm, and hence
	\[
	|a_h(\mathbf u_h,\mathbf v_h)|
	\le C\Vert\mathbf u_h\Vert_h\Vert\mathbf v_h\Vert_h.
	\]
	For the coupling term, Cauchy's inequality, the pressure-average trace estimate \eqref{average trace estimate}, and the fact that $\PF$ controls $\jp{\Mgam\mathbf u_h}$ give
	\[
	\begin{aligned}
		|b_h(\mathbf u_h,q_h)|
		&\le
		\Vert\alpha^{-1/2}\operatorname{div}\mathbf u_h\Vert_{0,\Omega_1\cup\Omega_2}
		\Vert\alpha^{1/2}q_h\Vert_{0,\Omega_1\cup\Omega_2} \\
		&\quad+
		\Vert\alphaG^{-1/2}\jp{\Mgam\mathbf u_h}\Vert_{1/2,h,\Gamma}
		\Vert\alphaG^{1/2}\avg{q_h}\Vert_{-1/2,h,\Gamma} \\
		&\le C\Vert\mathbf u_h\Vert_h\Vert(0,q_h)\Vert_h.
	\end{aligned}
	\]
	The same argument gives
	\[
	|b_h(\mathbf v_h,p_h)|
	\le C\Vert\mathbf v_h\Vert_h\Vert(0,p_h)\Vert_h.
	\]
	The pressure stabilization terms are bounded by Cauchy's inequality. Combining these bounds proves \eqref{boundness}.
\end{proof}

\subsection{Discrete inf-sup stability}
\begin{thm}[Discrete inf-sup stability]\label{thm inf sup}
	For sufficiently small $h$, there exists a positive constant $C_s$ independent of $h$ and of the cut position such that, for all $(\mathbf u_h,p_h)\in\mathbf V_h\times Q_h$,
	\begin{equation}\label{Coercivity}
		\sup_{(\mathbf v_h,q_h)\in\mathbf V_h\times Q_h}
		\frac{\mathcal L_h(\mathbf u_h,p_h;\mathbf v_h,q_h)}{\Vert(\mathbf v_h,q_h)\Vert_h}
		\ge C_s\Vert(\mathbf u_h,p_h)\Vert_h.
	\end{equation}
\end{thm}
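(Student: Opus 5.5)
The plan is the standard three-test-function argument for stabilized mixed methods: build one test pair $(\mathbf v_h,q_h)$ as a linear combination of three choices, each controlling part of $\Vert(\mathbf u_h,p_h)\Vert_h$, and bound its norm by $C\Vert(\mathbf u_h,p_h)\Vert_h$.

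\emph{Step 1: diagonal test.} Take $(\mathbf v_h,q_h)=(\mathbf u_h,p_h)$. The $b_h$ terms cancel by skew-symmetry, so
\[
\mathcal L_h(\mathbf u_h,p_h;\mathbf u_h,p_h)=a_h(\mathbf u_h,\mathbf u_h)+\gamma_1J_1(p_h,p_h)+\gamma_2J_2(p_h,p_h).
\]
By \eqref{ah definition} this equals $\NNorm{\mathbf u_h}^2+\Gdiv(\mathbf u_h,\mathbf u_h)$ plus the two pressure penalties. This step involves no estimates.

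\emph{Step 2: divergence control.} Take $(\mathbf v_h,q_h)=(0,-r_h)$ with $r_h=\alpha^{-1}\operatorname{div}\mathbf u_h\in Q_h$, where the divergence is constant on each extended element. Then $-b_h(\mathbf u_h,-r_h)$ contains $\Vert\alpha^{-1/2}\operatorname{div}\mathbf u_h\Vert_{0,\Omega_1\cup\Omega_2}^2$. It also contains the interface term $(\jp{\Mgam\mathbf u_h},\avg{r_h})_{\Gamma_K}$ and the penalties $\gamma_jJ_j(p_h,-r_h)$. I bound these as follows.
\begin{itemize}
\item The interface term is handled by Cauchy--Schwarz, using $\PF$-control of $\jp{\Mgam\mathbf u_h}$ and Lemma~\ref{lem average trace} for $\avg{r_h}$.
\item The $J_1,J_2$ cross terms are handled by Cauchy--Schwarz and \eqref{div ghost bridge closed}.
\item Lemma~\ref{lem div ghost extension} bounds everything in $r_h$ by $\Vert\alpha^{-1/2}\operatorname{div}\mathbf u_h\Vert^2+\Gdiv$.
\end{itemize}
Young's inequality then gives
\[
\mathcal L_h(\mathbf u_h,p_h;0,-r_h)\ge \tfrac12\Vert\alpha^{-1/2}\operatorname{div}\mathbf u_h\Vert^2-C\bigl(\PF+\Gdiv+J_1+J_2\bigr).
\]
The same lemmas give $\Vert(0,r_h)\Vert_h\le C\Vert\mathbf u_h\Vert_h$.

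\emph{Step 3: pressure control.} Take $(\mathbf v_h,q_h)=(\mathbf v_{p,h},0)$ from Lemma~\ref{lem pressure lifting}. Then
\[
\mathcal L_h=a_h(\mathbf u_h,\mathbf v_{p,h})+b_h(\mathbf v_{p,h},p_h).
\]
\begin{itemize}
\item The $b_h$ term is bounded below by \eqref{lifting lower bound}.
\item The $a_h$ term is bounded by continuity and \eqref{lifting norm bound} as $C\Vert\mathbf u_h\Vert_h(\Vert\alpha^{1/2}p_h\Vert^2+J_1+J_2)^{1/2}$, then split by Young's inequality with a small parameter.
\end{itemize}
The result is $\tfrac{c_p}{2}\Vert\alpha^{1/2}p_h\Vert^2-C(\Vert\mathbf u_h\Vert_h^2+J_1+J_2)$.

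\emph{Combination.} Set $(\mathbf v_h,q_h)=(\mathbf u_h,p_h)+\delta_1(0,-r_h)+\delta_2(\mathbf v_{p,h},0)$.
\begin{itemize}
\item First choose $\delta_1$ small enough that the negative terms $\delta_1C(\PF+\Gdiv+J_1+J_2)$ from Step 2 are absorbed by Step 1.
\item Then choose $\delta_2$ small enough that $\delta_2C(\Vert\mathbf u_h\Vert_h^2+J_1+J_2)$ is absorbed by Steps 1 and 2.
\end{itemize}
This gives $\mathcal L_h\ge c\Vert(\mathbf u_h,p_h)\Vert_h^2$. The norm bounds from Steps 2 and 3 give $\Vert(\mathbf v_h,q_h)\Vert_h\le C\Vert(\mathbf u_h,p_h)\Vert_h$, and dividing yields \eqref{Coercivity}. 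All constants are inherited from cut-independent lemmas, so $C_s$ is independent of $h$ and of the cut position, though it may depend on the coefficient contrast.

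\emph{Main obstacle.} The delicate point is the ordering of absorption in Step 2. There, $\Vert\mathbf u_h\Vert_h$ itself contains $\Vert\alpha^{-1/2}\operatorname{div}\mathbf u_h\Vert^2$, which Step 1 does not control. The Step 2 error terms must therefore involve only $\PF$, $\Gdiv$, $J_1$, $J_2$ and a small multiple of the divergence norm, never the full $\Vert\mathbf u_h\Vert_h$. This requires using the extended-domain bound \eqref{active divergence extension} carefully rather than crude continuity. I would also check that the hypothesis ``sufficiently small $h$'' enters only through the $h$-factors in Lemma~\ref{lem Gker kernel control}, and that the combination does not need those factors.
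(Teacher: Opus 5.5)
Your proposal is correct and follows essentially the same route as the paper. Both use the test pair $(\mathbf u_h+\delta\,\mathbf v_{p,h},\;p_h-\delta'\alpha^{-1}\operatorname{div}\mathbf u_h)$ together with Lemmas~\ref{lem pressure lifting}, \ref{lem div ghost extension} and \ref{lem average trace}, and the same absorption argument. The one cosmetic difference is that the paper bounds $a_h(\mathbf u_h,\mathbf v_{p,h})$ by $(\NNorm{\mathbf u_h}^2+\Gdiv(\mathbf u_h,\mathbf u_h))^{1/2}$ rather than by the full $\Vert\mathbf u_h\Vert_h$, so no divergence term appears in the lifting step and its two weights can be chosen independently; your coarser bound instead forces the nested choice $\delta_2\ll\delta_1$, which you handle correctly.
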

\begin{proof}
	Let \(\mathbf v_{p,h}\) be the pressure-lifting function given by
	Lemma~4.8. We use the test pair
	\[
	(\mathbf v_h,q_h)
	=
	\left(
	\mathbf u_h+\delta_1\mathbf v_{p,h},
	\;
	p_h-\delta_2\alpha^{-1}\operatorname{div}\mathbf u_h
	\right),
	\qquad
	\delta_1,\delta_2>0,
	\]
	where \(\delta_1,\delta_2\) will be chosen later. Set
	\[
	r_h:=\alpha^{-1}\operatorname{div}\mathbf u_h.
	\]
	For brevity, denote
	\[
	\mathcal U_h:=\vert\!\vert\!\vert\mathbf u_h\vert\!\vert\!\vert^2,
	\qquad
	\mathcal G_h:=G_{\rm div}(\mathbf u_h,\mathbf u_h),
	\]
	\[
	\mathcal D_h:=
	\|\alpha^{-1/2}\operatorname{div}\mathbf u_h\|_{0,\Omega_1\cup\Omega_2}^2,
	\qquad
	\mathcal P_h:=
	\|\alpha^{1/2}p_h\|_{0,\Omega_1\cup\Omega_2}^2,
	\]
	and
	\[
	\mathcal J_{1,h}:=J_1(p_h,p_h),
	\qquad
	\mathcal J_{2,h}:=J_2(p_h,p_h).
	\]
	
	By bilinearity,
	\begin{equation}\label{eq:infsup-splitting}
		\begin{aligned}
			\mathcal L_h(\mathbf u_h,p_h;\mathbf v_h,q_h)
			&=
			\mathcal L_h(\mathbf u_h,p_h;\mathbf u_h,p_h)
			+\delta_1\mathcal L_h(\mathbf u_h,p_h;\mathbf v_{p,h},0)  \\
			&\quad
			-\delta_2\mathcal L_h(\mathbf u_h,p_h;0,r_h).
		\end{aligned}
	\end{equation}
	
	We first estimate the three terms on the right-hand side. Since the two
	coupling terms cancel when \((\mathbf v_h,q_h)=(\mathbf u_h,p_h)\), we have
	\begin{equation}\label{eq:infsup-basic-term}
		\begin{aligned}
			\mathcal L_h(\mathbf u_h,p_h;\mathbf u_h,p_h)
			&=
			a_h(\mathbf u_h,\mathbf u_h)
			+\gamma_1J_1(p_h,p_h)+\gamma_2J_2(p_h,p_h)      \\
			&\ge
			\mathcal U_h+\mathcal G_h
			+\gamma_1\mathcal J_{1,h}
			+\gamma_2\mathcal J_{2,h}.
		\end{aligned}
	\end{equation}
	
	Next we estimate the pressure-lifting term. By the definition of
	\(\mathcal L_h\),
	\[
	\mathcal L_h(\mathbf u_h,p_h;\mathbf v_{p,h},0)
	=
	a_h(\mathbf u_h,\mathbf v_{p,h})
	+
	b_h(\mathbf v_{p,h},p_h).
	\]
	Lemma~4.8 gives
	\[
	b_h(\mathbf v_{p,h},p_h)
	\ge
	c_p\mathcal P_h
	-
	C_p(\mathcal J_{1,h}+\mathcal J_{2,h}),
	\]
	and the continuity of \(a_h\), together with the stability estimate for
	\(\mathbf v_{p,h}\) in Lemma~4.8, gives
	\[
	|a_h(\mathbf u_h,\mathbf v_{p,h})|
	\le
	C(\mathcal U_h+\mathcal G_h)^{1/2}
	(\mathcal P_h+\mathcal J_{1,h}+\mathcal J_{2,h})^{1/2}.
	\]
	Hence, by Young's inequality, for any \(\varepsilon_1>0\),
	\begin{equation}\label{eq:infsup-pressure-lifting}
		\begin{aligned}
			\mathcal L_h(\mathbf u_h,p_h;\mathbf v_{p,h},0)
			&\ge
			(c_p-C\varepsilon_1)\mathcal P_h
			-(C_p+C\varepsilon_1)(\mathcal J_{1,h}+\mathcal J_{2,h})  \\
			&\quad
			-\frac{C}{\varepsilon_1}(\mathcal U_h+\mathcal G_h).
		\end{aligned}
	\end{equation}
	
	It remains to estimate the divergence test. Since
	\[
	\mathcal L_h(\mathbf u_h,p_h;0,r_h)
	=
	-b_h(\mathbf u_h,r_h)
	+\gamma_1J_1(p_h,r_h)
	+\gamma_2J_2(p_h,r_h),
	\]
	we have
	\[
	-\mathcal L_h(\mathbf u_h,p_h;0,r_h)
	=
	b_h(\mathbf u_h,r_h)
	-\gamma_1J_1(p_h,r_h)
	-\gamma_2J_2(p_h,r_h).
	\]
	The volume part of \(b_h(\mathbf u_h,r_h)\) gives exactly
	\[
	\sum_{i=1}^2\sum_{K}
	(\operatorname{div}\mathbf u_{h,i},r_{h,i})_{K_i}
	=
	\mathcal D_h.
	\]
	The interface part is bounded by the trace estimate for the harmonic
	average and Lemma~4.9:
	\[
	\left|
	\sum_{K\in\mathcal T_h^\Gamma}
	([\![\mathcal M_\Gamma\mathbf u_h]\!],\{r_h\})_{\Gamma_K}
	\right|
	\le
	C\mathcal U_h^{1/2}
	\left(\mathcal D_h+\mathcal G_h\right)^{1/2}.
	\]
	Therefore, for any \(\varepsilon_2>0\),
	\[
	b_h(\mathbf u_h,r_h)
	\ge
	(1-C\varepsilon_2)\mathcal D_h
	-\frac{C}{\varepsilon_2}\mathcal U_h
	-C\varepsilon_2\mathcal G_h.
	\]
	
	For the stabilization cross terms, Cauchy's inequality and Young's
	inequality give, for any \(\varepsilon_3,\varepsilon_4>0\),
	\[
	\gamma_1|J_1(p_h,r_h)|
	\le
	\gamma_1\frac{C}{\varepsilon_3}\mathcal J_{1,h}
	+
	C\gamma_1\varepsilon_3 J_1(r_h,r_h),
	\]
	and
	\[
	\gamma_2|J_2(p_h,r_h)|
	\le
	\gamma_2\frac{C}{\varepsilon_4}\mathcal J_{2,h}
	+
	C\gamma_2\varepsilon_4 J_2(r_h,r_h).
	\]
	By Lemma~4.9,
	\[
	J_1(r_h,r_h)+J_2(r_h,r_h)
	\le
	C(\mathcal D_h+\mathcal G_h).
	\]
	Combining the preceding estimates yields
	\begin{equation}\label{eq:infsup-divergence-test}
		\begin{aligned}
			-\mathcal L_h(\mathbf u_h,p_h;0,r_h)
			&\ge
			(1-C\varepsilon_2-C\gamma_1\varepsilon_3-C\gamma_2\varepsilon_4)
			\mathcal D_h                                      \\
			&\quad
			-\frac{C}{\varepsilon_2}\mathcal U_h
			-C(\varepsilon_2+\gamma_1\varepsilon_3+\gamma_2\varepsilon_4)
			\mathcal G_h                                      \\
			&\quad
			-\gamma_1\frac{C}{\varepsilon_3}\mathcal J_{1,h}
			-\gamma_2\frac{C}{\varepsilon_4}\mathcal J_{2,h}.
		\end{aligned}
	\end{equation}
	This is the point where the divergence ghost penalty is used: Lemma~4.9
	does not bound \(J_1(r_h,r_h)+J_2(r_h,r_h)\) by the physical divergence
	norm alone, but by the physical divergence norm together with
	\(G_{\rm div}(\mathbf u_h,\mathbf u_h)\).
	
	Now we insert
	\eqref{eq:infsup-basic-term}, \eqref{eq:infsup-pressure-lifting}, and
	\eqref{eq:infsup-divergence-test} into \eqref{eq:infsup-splitting}. We obtain
	\begin{equation}\label{eq:infsup-combined-lower}
		\begin{aligned}
			\mathcal L_h(\mathbf u_h,p_h;\mathbf v_h,q_h)
			&\ge
			\left(1-\delta_1\frac{C}{\varepsilon_1}
			-\delta_2\frac{C}{\varepsilon_2}\right)\mathcal U_h       \\
			&\quad
			+\left(1-\delta_1\frac{C}{\varepsilon_1}
			-\delta_2C(\varepsilon_2+\gamma_1\varepsilon_3+\gamma_2\varepsilon_4)
			\right)\mathcal G_h                                             \\
			&\quad
			+\delta_1(c_p-C\varepsilon_1)\mathcal P_h                         \\
			&\quad
			+\delta_2(1-C\varepsilon_2-C\gamma_1\varepsilon_3-C\gamma_2\varepsilon_4)
			\mathcal D_h                                                      \\
			&\quad
			+\left(\gamma_1-\delta_1(C_p+C\varepsilon_1)
			-\delta_2\gamma_1\frac{C}{\varepsilon_3}\right)
			\mathcal J_{1,h}                                                  \\
			&\quad
			+\left(\gamma_2-\delta_1(C_p+C\varepsilon_1)
			-\delta_2\gamma_2\frac{C}{\varepsilon_4}\right)
			\mathcal J_{2,h}.
		\end{aligned}
	\end{equation}
	
	We now choose the parameters. First choose
	\(\varepsilon_1>0\) sufficiently small so that
	\(c_p-C\varepsilon_1>0\). Then choose
	\(\varepsilon_2,\varepsilon_3,\varepsilon_4>0\) sufficiently small so that
	\[
	1-C\varepsilon_2-C\gamma_1\varepsilon_3-C\gamma_2\varepsilon_4>0.
	\]
	After these Young parameters are fixed, choose
	\(\delta_1,\delta_2>0\) sufficiently small so that all coefficients in
	\eqref{eq:infsup-combined-lower} are positive. Hence there exists a
	constant \(C_I>0\), independent of \(h\) and of the cut position, such that
	\begin{equation}\label{eq:infsup-positive-test}
		\mathcal L_h(\mathbf u_h,p_h;\mathbf v_h,q_h)
		\ge
		C_I\|(\mathbf u_h,p_h)\|_h^2 .
	\end{equation}
	
	It remains to bound the norm of the chosen test pair. By the triangle
	inequality,
	\[
	\|(\mathbf v_h,q_h)\|_h
	\le
	\|(\mathbf u_h,p_h)\|_h
	+
	\delta_1\|(\mathbf v_{p,h},0)\|_h
	+
	\delta_2\|(0,r_h)\|_h.
	\]
	Lemma~4.8 gives
	\[
	\|(\mathbf v_{p,h},0)\|_h
	\le
	C
	\left(
	\mathcal P_h+\mathcal J_{1,h}+\mathcal J_{2,h}
	\right)^{1/2}
	\le
	C\|(\mathbf u_h,p_h)\|_h.
	\]
	Moreover, using Lemma~4.9,
	\[
	\|(0,r_h)\|_h^2
	=
	\|\alpha^{1/2}r_h\|_{0,\Omega_1\cup\Omega_2}^2
	+
	J_1(r_h,r_h)+J_2(r_h,r_h)
	\le
	C(\mathcal D_h+\mathcal G_h)
	\le
	C\|(\mathbf u_h,p_h)\|_h^2.
	\]
	Thus there exists \(C_T>0\), independent of \(h\) and of the cut position,
	such that
	\begin{equation}\label{eq:infsup-test-bound}
		\|(\mathbf v_h,q_h)\|_h
		\le
		C_T\|(\mathbf u_h,p_h)\|_h.
	\end{equation}
	
	Finally, by \eqref{eq:infsup-positive-test} and
	\eqref{eq:infsup-test-bound},
	\[
	\sup_{(\mathbf w_h,s_h)\in\mathbf V_h\times Q_h}
	\frac{
		\mathcal L_h(\mathbf u_h,p_h;\mathbf w_h,s_h)
	}{
		\|(\mathbf w_h,s_h)\|_h
	}
	\ge
	\frac{
		\mathcal L_h(\mathbf u_h,p_h;\mathbf v_h,q_h)
	}{
		\|(\mathbf v_h,q_h)\|_h
	}
	\ge
	\frac{C_I}{C_T}\|(\mathbf u_h,p_h)\|_h.
	\]
	Taking \(C_s=C_I/C_T\) proves the discrete inf-sup condition.
\end{proof}

\section{Error estimates}

\subsection{Interpolation estimate}
We use the extension operators and interpolants introduced in \eqref{early interpolation notation}. In particular, $\Idiv$ is the canonical $H(\operatorname{div})$ VEM interpolant and $\Pzero$ is the elementwise $L^2$ projection onto $\mathbb P_0$.

\begin{lem}[Pressure interpolation estimate]\label{lem:scalar-projection-J}
	Let \(p_I\) be defined by
	\[
	p_I|_K = P_K^0(E^i p_i)|_K,
	\qquad K\in\mathcal T_{h,i}^{+},\quad i=1,2,
	\]
	where \(P_K^0\) is the \(L^2(K)\)-projection onto \(\mathbb P_0(K)\).
	Assume that \(p_i\in H^1(\Omega_i)\), \(i=1,2\), and that the exact pressure
	satisfies
	\[
	[\![p]\!]=0 \qquad \text{on }\Gamma .
	\]
	Then
	\[
	J_1(p-p_I,p-p_I)+J_2(p-p_I,p-p_I)
	\le
	C h^2
	\|\alpha^{1/2}p\|_{1,\Omega_1\cup\Omega_2}^2 .
	\]
\end{lem}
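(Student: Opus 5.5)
We estimate $J_1$ and $J_2$ separately, working with the extended functions $\tilde p_i:=E^ip_i\in H^1(\Omega_{h,i})$. On every $K\in\mathcal T_{h,i}^+$ we have $p_i=\tilde p_i$ on $K_i$. We therefore interpret $p-p_I$ on the extended meshes as $\tilde p_i-P_K^0\tilde p_i$, which is the natural reading of $J_1(p-p_I,\cdot)$ on transmission edges lying partly outside $\Omega_i$. The constant-in-$\alpha_i$ weights simply factor through. At the end we use the extension bound \eqref{scalar extension bound} with $m=1$, together with bounded overlap of the element patches, to return to $\|\alpha^{1/2}p\|_{1,\Omega_1\cup\Omega_2}$.

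\emph{The $J_1$ term.} Let $e\in\mathcal E_{\Gamma,i}$ be shared by $K_l,K_r$. Since $\tilde p_i\in H^1(\Omega_{h,i})$ has no jump across $e$,
\[
\jp{\tilde p_i-p_{I,i}}|_e=(\tilde p_i-P_{K_l}^0\tilde p_i)|_e-(\tilde p_i-P_{K_r}^0\tilde p_i)|_e .
\]
We will use the scaled trace inequality on the whole background element (valid under (A2), since star-shapedness gives a uniform trace constant):
\[
\|w\|_{0,e}^2\le C\bigl(h_K^{-1}\|w\|_{0,K}^2+h_K|w|_{1,K}^2\bigr).
\]
Combined with the Poincaré estimate $\|\tilde p_i-P_K^0\tilde p_i\|_{0,K}\le Ch_K|\tilde p_i|_{1,K}$, this yields $\|\tilde p_i-P_K^0\tilde p_i\|_{0,e}^2\le Ch_K|\tilde p_i|_{1,K}^2$. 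Hence
\[
h_e\alpha_i\|\jp{p-p_I}\|_{0,e}^2\le Ch^2\alpha_i\bigl(|\tilde p_i|_{1,K_l}^2+|\tilde p_i|_{1,K_r}^2\bigr).
\]
Summing over the edges, each element is counted a bounded number of times, so $J_1(p-p_I,p-p_I)\le Ch^2\sum_i\alpha_i\|\tilde p_i\|_{1,\Omega_{h,i}}^2$.

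\emph{The $J_2$ term.} This is the step requiring care, because the trace is taken on the curved cut segment $\Gamma_K$, whose position relative to $K$ is arbitrary. Since $\jp{p}=0$ on $\Gamma$ and $\tilde p_i=p_i$ there,
\[
\jp{p-p_I}=-(p_{1}-P_K^0\tilde p_1)+(p_2-P_K^0\tilde p_2)\quad\hbox{on }\Gamma_K .
\]
Each piece has the form $(\tilde p_i-P_K^0\tilde p_i)|_{\Gamma_K}$. We need a trace inequality on $\Gamma_K$ with constants independent of the cut:
\[
\|w\|_{0,\Gamma_K}^2\le C\bigl(h_K^{-1}\|w\|_{0,K}^2+h_K|w|_{1,K}^2\bigr),\qquad w\in H^1(K).
\]
This holds under (A3)--(A4), since $\Gamma_K$ is a smooth graph over $\Gamma_{K,h}$ with $|\delta_K'|$ small for $h$ small. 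One integrates along lines transversal to the graph, using the full element $K$ rather than $K_i$, so small cut fractions do not matter. With this inequality and Poincaré on $K$ we get $\|\tilde p_i-P_K^0\tilde p_i\|_{0,\Gamma_K}^2\le Ch_K|\tilde p_i|_{1,K}^2$. Using $\alphaG\le 2\min(\alpha_1,\alpha_2)\le 2\alpha_i$ gives
\[
h_K\alphaG\|\jp{p-p_I}\|_{0,\Gamma_K}^2\le Ch^2\sum_i\alpha_i|\tilde p_i|_{1,K}^2 .
\]
We then sum over $K\in\mathcal T_h^\Gamma$. The main obstacle is justifying this cut-independent trace inequality on $\Gamma_K$; we would cite or prove it via the standard argument from the unfitted Nitsche literature (e.g.\ Hansbo--Hansbo).

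\emph{Conclusion.} Applying \eqref{scalar extension bound} gives $\alpha_i\|\tilde p_i\|_{1,\Omega_{h,i}}^2\le C\alpha_i\|p_i\|_{1,\Omega_i}^2$. Adding the $J_1$ and $J_2$ bounds proves the claim with $C$ independent of $h$ and of the cut position.
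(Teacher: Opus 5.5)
Your proposal is correct and follows essentially the same route as the paper. For $J_1$ you both use single-valuedness of $E^ip_i$ across transmission edges, the elementwise trace inequality and the $P_K^0$ approximation estimate; for $J_2$ you both use a cut-independent trace inequality on $\Gamma_K$, $\alphaG\le 2\alpha_i$, and the extension bound \eqref{scalar extension bound}. One harmless slip: your formula for $\jp{p-p_I}$ on $\Gamma_K$ has the overall sign reversed (what you wrote is $\jp{p_I-p}$), and that identity holds by definition without invoking $\jp{p}=0$; neither point affects the estimate once norms are taken.
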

\begin{proof}
	We first estimate the \(J_1\)-term. Let
	\(e\in\mathcal E_{\Gamma,i}\) be a transmission edge associated with the
	subdomain \(\Omega_i\), and let \(K^+\) and \(K^-\) be the two background
	elements sharing \(e\). Since the extended function \(E^ip_i\) is single-valued
	across \(e\), we have
	\[
	[\![E^ip_i]\!]_e=0 .
	\]
	Therefore,
	\[
	[\![p-p_I]\!]_e
	=
	[\![E^ip_i-p_I]\!]_e,
	\]
	and hence
	\[
	\|[\![p-p_I]\!]_e\|_{0,e}
	=
	\|[\![E^ip_i-p_I]\!]_e\|_{0,e}.
	\]
	By the triangle inequality,
	\[
	\|[\![E^ip_i-p_I]\!]_e\|_{0,e}^2
	\le
	C
	\sum_{K\in\omega_e}
	\|E^ip_i-P_K^0(E^ip_i)\|_{0,e}^2,
	\]
	where \(\omega_e=\{K^+,K^-\}\). Using the trace inequality on each
	background element \(K\in\omega_e\), we obtain
	\[
	\begin{aligned}
		h_e
		\|E^ip_i-P_K^0(E^ip_i)\|_{0,e}^2
		&\le
		C\left(
		\|E^ip_i-P_K^0(E^ip_i)\|_{0,K}^2
		+
		h_K^2 |E^ip_i|_{1,K}^2
		\right).
	\end{aligned}
	\]
	The approximation property of the \(L^2(K)\)-projection gives
	\[
	\|E^ip_i-P_K^0(E^ip_i)\|_{0,K}
	\le
	C h_K |E^ip_i|_{1,K}.
	\]
	Consequently,
	\[
	h_e
	\|[\![p-p_I]\!]_e\|_{0,e}^2
	\le
	C h^2
	\sum_{K\in\omega_e}
	|E^ip_i|_{1,K}^2 .
	\]
	Multiplying by \(\alpha_i\), summing over all
	\(e\in\mathcal E_{\Gamma,i}\), and using the finite overlap of the edge
	patches, we get
	\[
	\begin{aligned}
		J_1(p-p_I,p-p_I)
		&=
		\sum_{i=1}^2
		\sum_{e\in\mathcal E_{\Gamma,i}}
		h_e\alpha_i
		\|[\![p-p_I]\!]_e\|_{0,e}^2      \\
		&\le
		C h^2
		\sum_{i=1}^2
		\alpha_i
		\|E^ip_i\|_{1,\Omega_{h,i}}^2 .
	\end{aligned}
	\]
	By the extension estimate,
	\[
	\|E^ip_i\|_{1,\Omega_{h,i}}
	\le
	C\|p_i\|_{1,\Omega_i},
	\]
	and therefore
	\[
	J_1(p-p_I,p-p_I)
	\le
	C h^2
	\|\alpha^{1/2}p\|_{1,\Omega_1\cup\Omega_2}^2 .
	\]
	
	We next estimate the \(J_2\)-term. Since the exact pressure satisfies
	\[
	[\![p]\!]=0
	\qquad \text{on }\Gamma,
	\]
	we have, on each interface segment \(\Gamma_K\),
	\[
	[\![p-p_I]\!]
	=
	-[\![p_I-p]\!].
	\]
	Hence
	\[
	\|[\![p-p_I]\!]\|_{0,\Gamma_K}
	=
	\|[\![p_I-p]\!]\|_{0,\Gamma_K}.
	\]
	By the triangle inequality,
	\[
	\|[\![p_I-p]\!]\|_{0,\Gamma_K}^2
	\le
	C
	\sum_{i=1}^2
	\|p_{I,i}-p_i\|_{0,\Gamma_K}^2 .
	\]
	On \(K\in\mathcal T_h^\Gamma\), we have
	\[
	p_{I,i}=P_K^0(E^ip_i),
	\]
	and \(p_i\) on \(\Gamma_K\) is the trace of \(E^ip_i\). Thus the cut trace
	inequality gives
	\[
	\begin{aligned}
		h_K
		\|p_i-p_{I,i}\|_{0,\Gamma_K}^2
		&\le
		C\left(
		\|E^ip_i-P_K^0(E^ip_i)\|_{0,K}^2
		+
		h_K^2 |E^ip_i|_{1,K}^2
		\right)        \\
		&\le
		C h_K^2 |E^ip_i|_{1,K}^2 .
	\end{aligned}
	\]
	Therefore,
	\[
	\begin{aligned}
		J_2(p-p_I,p-p_I)
		&=
		\sum_{K\in\mathcal T_h^\Gamma}
		h_K\alpha_\Gamma
		\|[\![p-p_I]\!]\|_{0,\Gamma_K}^2          \\
		&\le
		C
		\sum_{K\in\mathcal T_h^\Gamma}
		\sum_{i=1}^2
		h_K\alpha_\Gamma
		\|p_i-p_{I,i}\|_{0,\Gamma_K}^2             \\
		&\le
		C h^2
		\sum_{K\in\mathcal T_h^\Gamma}
		\sum_{i=1}^2
		\alpha_\Gamma
		|E^ip_i|_{1,K}^2 .
	\end{aligned}
	\]
	Since
	\[
	\alpha_\Gamma
	=
	\frac{2\alpha_1\alpha_2}{\alpha_1+\alpha_2}
	\le
	2\alpha_i,
	\qquad i=1,2,
	\]
	we obtain
	\[
	J_2(p-p_I,p-p_I)
	\le
	C h^2
	\sum_{i=1}^2
	\alpha_i
	\|E^ip_i\|_{1,\Omega_{h,i}}^2 .
	\]
	Using again the extension estimate,
	\[
	J_2(p-p_I,p-p_I)
	\le
	C h^2
	\|\alpha^{1/2}p\|_{1,\Omega_1\cup\Omega_2}^2 .
	\]
	
	Combining the estimates for \(J_1\) and \(J_2\) proves the result.
\end{proof}

\begin{thm}\label{interpolation error estimation}
	Assume the standard local approximation estimates of the $H(\operatorname{div})$ VEM interpolant. For $\mathbf u\in H^2(\Omega_1\cup\Omega_2)^2$ with $\operatorname{div}\mathbf u\in H^1(\Omega_1\cup\Omega_2)$ and $p\in H^1(\Omega_1\cup\Omega_2)$ satisfying $\jp{p}=0$ on $\Gamma$, the interpolants defined in \eqref{early interpolation notation} satisfy
	\begin{equation}\label{interpolation estimate final}
		\Vert(\mathbf u-\mathbf u_I,p-p_I)\Vert_h
		\le Ch\left(
		\Vert\alpha^{-\frac12}\mathbf u\Vert_{2,\Omega_1\cup\Omega_2}
		+\Vert\alpha^{-\frac12}\operatorname{div}\mathbf u\Vert_{1,\Omega_1\cup\Omega_2}
		+\Vert\alpha^{\frac12}p\Vert_{1,\Omega_1\cup\Omega_2}
		\right).
	\end{equation}
\end{thm}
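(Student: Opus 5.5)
We bound the norm $\Vert(\mathbf u-\mathbf u_I,p-p_I)\Vert_h$ term by term. The plan is to set, on each extended mesh, $\mathbf w_i=E^i_{\rm div}\mathbf u_i-\Idiv(E^i_{\rm div}\mathbf u_i)$ and $\rho_i=E^ip_i-\Pzero(E^ip_i)$. These coincide with $\mathbf u_i-\mathbf u_{I,i}$ and $p_i-p_{I,i}$ on $K_i$. Every physical-domain integral over $K_i$ can then be enlarged to the whole background element $K$, which removes any dependence on the cut position. At the end we sum over $K\in\mathcal T_{h,i}^+$, use finite overlap, and apply the extension bounds \eqref{scalar extension bound}--\eqref{div extension bound}.

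\textbf{Pressure part.} First, $\Vert\alpha_i^{1/2}\rho_i\Vert_{0,K_i}\le\Vert\alpha_i^{1/2}\rho_i\Vert_{0,K}\le Ch_K\alpha_i^{1/2}|E^ip_i|_{1,K}$. Second, $J_1(p-p_I,p-p_I)+J_2(p-p_I,p-p_I)$ is exactly Lemma~\ref{lem:scalar-projection-J}. Together these give the $\Vert\alpha^{1/2}p\Vert_1$ contribution.

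\textbf{Velocity $L^2$ and stabilization part.} For the projected term, use $\Vert\QK\mathbf w_i\Vert_{0,K_i}\le\Vert\QK\mathbf w_i\Vert_{0,K}\le\Vert\mathbf w_i\Vert_{0,K}$. The assumed standard VEM interpolation estimate then gives $\Vert\mathbf w_i\Vert_{0,K}\le Ch_K|E^i_{\rm div}\mathbf u_i|_{1,K}$; the $h_K^2|\cdot|_{2,K}$ version used in Lemma~\ref{lem interpolation new penalties} also suffices. For $S_{\alpha,i}^K((I-\QK)\mathbf w_i,(I-\QK)\mathbf w_i)$, apply the upper bound in Lemma~\ref{lem local S stability}, since $(I-\QK)\mathbf w_i\in\ker\QK$ when $\mathbf w_i$ is viewed in the discrete-plus-smooth setting. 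One caveat: the DOF-based form must be bounded for a non-discrete argument. I would handle this by the usual DOF estimate $|K|\sum_r\widehat\chi_r(\mathbf z)^2\le C(\Vert\mathbf z\Vert_{0,K}^2+h_K^2|\mathbf z|_{1,K}^2)$ for $\mathbf z\in H^1(K)^2$, which follows from trace inequalities. The extra $\Gker$ piece and $\PF$ are exactly \eqref{interpolation Gker term} and \eqref{interpolation PF term} of Lemma~\ref{lem interpolation new penalties}. Together these bound $\NNorm{\mathbf u-\mathbf u_I}^2$ by $Ch^2(\Vert\alpha^{-1/2}\mathbf u\Vert_2^2+\Vert\alpha^{-1/2}\operatorname{div}\mathbf u\Vert_1^2)$.

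\textbf{Divergence part.} By Lemma~\ref{lem commuting}, $\operatorname{div}\mathbf w_i=(I-\Pzero)\operatorname{div}E^i_{\rm div}\mathbf u_i$ on each $K$. Hence $\Vert\alpha_i^{-1/2}\operatorname{div}\mathbf w_i\Vert_{0,K_i}\le Ch_K\alpha_i^{-1/2}|\operatorname{div}E^i_{\rm div}\mathbf u_i|_{1,K}$. For $\Gdiv$, the function $\operatorname{div}E^i_{\rm div}\mathbf u_i\in H^1(\Omega_{h,i})$ is single-valued across every $e\in\mathcal E_{\Gamma,i}$. Therefore the jump of $\operatorname{div}\mathbf w_i$ equals the jump of $-\Pzero\operatorname{div}E^i_{\rm div}\mathbf u_i$, that is, of $(I-\Pzero)\operatorname{div}E^i_{\rm div}\mathbf u_i$ from both sides. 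Repeating verbatim the trace-plus-Poincaré argument in the $J_1$ part of Lemma~\ref{lem:scalar-projection-J}, with weight $\alpha_i^{-1}$ in place of $\alpha_i$, gives $\Gdiv(\mathbf u-\mathbf u_I,\mathbf u-\mathbf u_I)\le Ch^2\Vert\alpha^{-1/2}\operatorname{div}\mathbf u\Vert_{1}^2$ after extension.

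Summing all contributions and taking square roots yields \eqref{interpolation estimate final}. I expect the main obstacle to be the standard stabilization term $S_{\alpha,i}^K$ applied to the non-discrete error $(I-\QK)\mathbf w_i$. Lemma~\ref{lem local S stability} is stated only on the discrete kernel, so the continuous DOF bound above must be invoked, or else one must split $\mathbf w_i=(E^i_{\rm div}\mathbf u_i-\boldsymbol\pi)-(\Idiv E^i_{\rm div}\mathbf u_i-\boldsymbol\pi)$ with $\boldsymbol\pi$ a linear Taylor polynomial, which is exactly reproduced by $\Idiv$ and annihilated by $I-\QK$. The remaining steps are routine given the earlier lemmas.
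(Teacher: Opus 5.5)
Your proposal is correct and takes the same route as the paper: term-by-term bounds using the VEM interpolation estimate and the commuting property, Lemma~\ref{lem interpolation new penalties} for $\Gker$ and $\PF$, single-valuedness of $\operatorname{div}E^i_{\rm div}\mathbf u_i$ on transmission edges for $\Gdiv$, and Lemma~\ref{lem:scalar-projection-J} for $J_1+J_2$, while supplying details the paper leaves implicit. On the stabilization caveat you raise, your Taylor-polynomial splitting works, whereas your first route is shakier because $\Idiv E^i_{\rm div}\mathbf u_i$ need not lie in $H^1(K)$ on general polygons; the quickest fix is to note that $\widehat\chi_r(\mathbf w_i)=0$ for all $r$ by the definition of $\Idiv$, so $S_{\alpha,i}^K((I-\QK)\mathbf w_i,(I-\QK)\mathbf w_i)=\alpha_i^{-1}|K|\sum_r\widehat\chi_r(\QK\mathbf w_i)^2\le C\alpha_i^{-1}\Vert\mathbf w_i\Vert_{0,K}^2$.
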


\begin{proof}
	Set $\bm\zeta=\mathbf u-\mathbf u_I$ and $\eta=p-p_I$. The projected $L^2$ term, the standard part of the VEM stabilization, and the physical divergence term are estimated by the standard VEM interpolation estimates and the commuting property:
	\begin{align*}
		&\sum_{i,K}\Vert\alpha_i^{-1/2}\QK\bm\zeta_i\Vert_{0,K_i}^2
		+\sum_{i,K}S_{\alpha,i}^K((I-\QK)\bm\zeta_i,(I-\QK)\bm\zeta_i)
		+\Vert\alpha^{-1/2}\operatorname{div}\bm\zeta\Vert_{0,\Omega_1\cup\Omega_2}^2 \\
		&\qquad\le Ch^2\left(
		\Vert\alpha^{-1/2}\mathbf u\Vert_{2,\Omega_1\cup\Omega_2}^2
		+\Vert\alpha^{-1/2}\operatorname{div}\mathbf u\Vert_{1,\Omega_1\cup\Omega_2}^2
		\right).
	\end{align*}
	On cut elements $S_{\alpha,i}^{K,+}=S_{\alpha,i}^{K}+G_{\operatorname{ker}}^K$ on the kernel part, and the additional contribution is exactly the first estimate in Lemma~\ref{lem interpolation new penalties}. The corrected flux penalty is controlled by the second estimate in the same lemma. The divergence ghost term satisfies
	\begin{equation*}
		\Gdiv(\bm\zeta,\bm\zeta)
		\le Ch^2\Vert\alpha^{-1/2}\operatorname{div}\mathbf u\Vert_{1,\Omega_1\cup\Omega_2}^2,
	\end{equation*}
	because the subdomain-wise extension is single-valued on transmission edges and the jump is the jump of the projection error of $\operatorname{div}\mathbf u$. Finally,
	\begin{equation*}
		\Vert\alpha^{1/2}\eta\Vert_{0,\Omega_1\cup\Omega_2}^2
		+J_1(\eta,\eta)+J_2(\eta,\eta)
		\le Ch^2\Vert\alpha^{1/2}p\Vert_{1,\Omega_1\cup\Omega_2}^2
	\end{equation*}
	by Lemma~\ref{lem:scalar-projection-J}. Combining these bounds gives \eqref{interpolation estimate final}.
\end{proof}

\subsection{Consistency and the a priori error estimate}

For later use define the regularity quantity
\begin{equation}\label{regularity N}
	\mathcal N(\mathbf u,p):=
	\Vert\alpha^{-1/2}\mathbf u\Vert_{2,\Omega_1\cup\Omega_2}
	+\Vert\alpha^{-1/2}\operatorname{div}\mathbf u\Vert_{1,\Omega_1\cup\Omega_2}
	+\Vert\alpha^{1/2}p\Vert_{1,\Omega_1\cup\Omega_2}.
\end{equation}

\begin{lem}[Consistency residual]\label{lem consistency residual}
	Let $(\mathbf u,p)$ be the exact solution and let $(\mathbf u_I,p_I)$ be defined by \eqref{early interpolation notation}. Then, for all $(\mathbf v_h,q_h)\in\mathbf V_h\times Q_h$,
	\begin{equation}\label{consistency residual estimate}
		\left|\mathcal L_h(\mathbf u_I,p_I;\mathbf v_h,q_h)
		-\sum_{i=1}^2\sum_{K\in\mathcal T_{h,i}^{+}}(f,q_{h,i})_{K_i}\right|
		\le Ch\,\mathcal N(\mathbf u,p)\Vert(\mathbf v_h,q_h)\Vert_h.
	\end{equation}
\end{lem}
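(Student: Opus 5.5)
The plan is to write the residual as the sum of an interpolation-error part and a consistency part. Because $\mathcal L_h$ is bilinear, and because the exact solution satisfies $-\operatorname{div}\mathbf u=f$ in each $\Omega_i$, we can compare $\mathcal L_h(\mathbf u_I,p_I;\cdot)$ with the continuous equations tested against the discrete functions. Concretely, the residual splits into
\[
R=\bigl[a_h(\mathbf u_I,\mathbf v_h)+b_h(\mathbf v_h,p_I)\bigr]
+\bigl[-b_h(\mathbf u_I,q_h)-\textstyle\sum_{i,K}(f,q_{h,i})_{K_i}\bigr]
+\gamma_1J_1(p_I,q_h)+\gamma_2J_2(p_I,q_h),
\]
and each bracket is treated separately.

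\textbf{Stabilization terms.} Since $\jp{p}=0$ on $\Gamma$ and the extension $E^ip_i$ is single-valued across transmission edges, we have $J_1(p,q_h)=J_2(p,q_h)=0$. Hence $J_j(p_I,q_h)=-J_j(p-p_I,q_h)$, and Cauchy--Schwarz together with Lemma~\ref{lem:scalar-projection-J} gives the bound $Ch\Vert\alpha^{1/2}p\Vert_1\,\Vert(\mathbf v_h,q_h)\Vert_h$.

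\textbf{Divergence equation.} By the commuting property (Lemma~\ref{lem commuting}), $\operatorname{div}\mathbf u_{I,i}=P_K^0\operatorname{div}E^i_{\rm div}\mathbf u_i$ on each background element. Then
$(\operatorname{div}\mathbf u_{I,i}+f,q_{h,i})_{K_i}=(P_K^0 g-g,q_{h,i})_{K_i}$ with $g=\operatorname{div}E^i_{\rm div}\mathbf u_i$. This is $O(h_K|g|_{1,K})\Vert q_{h,i}\Vert_{0,K_i}$, and the extension bound \eqref{div extension bound} gives the required order.

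The interface part $(\jp{\Mgam\mathbf u_I},\avg{q_h})_{\Gamma_K}$ is handled as follows. By Lemma~\ref{lem Mgam consistency} and $\jp{\mathbf u\cdot\mathbf n}=0$, we have $\jp{\Mgam\mathbf u}=0$, so this term equals $-(\jp{\Mgam(\mathbf u-\mathbf u_I)},\avg{q_h})$. Lemma~\ref{lem interpolation new penalties} (the $\PF$ estimate controls $\jp{\Mgam(\mathbf u-\mathbf u_I)}$ in the $\frac12,h,\Gamma$ norm) and Lemma~\ref{lem average trace} then bound it by $Ch\mathcal N\Vert(0,q_h)\Vert_h$.

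\textbf{Velocity equation.} This is the main obstacle. First, $\PF(\mathbf u_I,\mathbf v_h)=-\PF(\mathbf u-\mathbf u_I,\mathbf v_h)$ because $\mathcal F_{\Gamma,K}(\mathbf u)$ should vanish. Its constant part vanishes as shown above. For the zero-mean part I would add and subtract $\jp{\mathbf u\cdot\mathbf n}=0$ and use the $\QK$ approximation of $E^i_{\rm div}\mathbf u_i$ together with an inverse trace estimate, treating $\mathcal F_{\Gamma,K}(\mathbf u)$ as defined by the formula applied to the extensions. The resulting bound is $Ch\mathcal N\NNorm{\mathbf v_h}$.

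$\Gdiv(\mathbf u_I,\mathbf v_h)$ equals $-\Gdiv(\mathbf u-\mathbf u_I,\mathbf v_h)$, since $\operatorname{div}E^i_{\rm div}\mathbf u_i\in H^1$ has no jumps on edges. It is controlled as in Theorem~\ref{interpolation error estimation}.

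The stabilization term $S^{K,+}((I-\QK)\mathbf u_I,(I-\QK)\mathbf v_h)$ is bounded by Cauchy--Schwarz and Lemma~\ref{lem interpolation new penalties}, using $(I-\QK)\mathbf u_I=(I-\QK)(\mathbf u_I-E\mathbf u)$ up to $O(h)$ terms.

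For the remaining core, $\sum(\alpha_i^{-1}\QK\mathbf u_I,\QK\mathbf v_h)_{K_i}+b_h(\mathbf v_h,p_I)$, the steps are:
\begin{enumerate}
\item Replace $\QK\mathbf u_I$ by $\mathbf u$, at cost $\Vert\QK\mathbf u_I-E\mathbf u\Vert_{0,K}\le Ch|\mathbf u|_1$.
\item Replace $p_I$ by $p$ in the volume and interface terms, using Lemma~\ref{lem:scalar-projection-J} and the divergence estimate in $\Vert\mathbf v_h\Vert_h$.
\item Integrate by parts on $K_i$. Writing $\mathbf u=\alpha\nabla p$, one gets $(\alpha_i^{-1}\mathbf u,\QK\mathbf v_h)_{K_i}=-(p,\operatorname{div}\mathbf v_h)_{K_i}+(\nabla p,\QK\mathbf v_h-\mathbf v_h)_{K_i}+\text{boundary terms}$.
\end{enumerate}
The boundary terms on background edges cancel because $\mathbf v_{h,i}$ is $H(\operatorname{div})$-conforming on $\Omega_{h,i}$, and the outer boundary term vanishes since $p=0$. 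What remains on $\Gamma_K$ is $(p,\jp{\mathbf v_h\cdot\mathbf n})$, which matches $(\avg{p},\jp{\Mgam\mathbf v_h})$ up to exactly the quantity estimated in Lemma~\ref{lem interface projection consistency}.

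The delicate term is $(\nabla p,(\QK-I)\mathbf v_h)_{K_i}$ on cut cells. On uncut cells it vanishes after subtracting $\Pi^0\nabla p$, since $(I-\QK)\mathbf v_h$ is orthogonal to constants on the whole $K$. On cut cells I would bound it by $\Vert\alpha^{-1/2}\mathbf u\Vert_{0,K}\Vert\alpha^{-1/2}(I-\QK)\mathbf v_h\Vert_{0,K}$ and apply \eqref{Gker L2 control}. This yields an $h^{1/2}$ factor, plus the thin-strip estimate $\Vert\mathbf u\Vert_{0,U_h(\Gamma)}\le Ch^{1/2}\Vert\mathbf u\Vert_1$ over the cut layer, giving $O(h)$ overall.
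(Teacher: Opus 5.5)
Your proposal is correct and follows essentially the same route as the paper. It uses the same residual splitting into the $J_1/J_2$, divergence, interface-average, $\Gdiv$, $\PF$, stabilization and mass terms, treats $\mathcal C_\Gamma$ by Lemma~\ref{lem interface projection consistency}, and bounds the only nonstandard cut-cell kernel term by the strip estimate times \eqref{Gker L2 control} (the paper inserts a local $\mathbb P_1$ approximation $\mathbf u_{\pi,i}$ there, which is immaterial). Two wording slips do not affect the argument: $\PF(\mathbf u_I,\mathbf v_h)$ is not exactly $-\PF(\mathbf u-\mathbf u_I,\mathbf v_h)$, since $(I-\Pigam)\jp{\QK\mathbf u\cdot\mathbf n}$ is in general nonzero (you do estimate it, but this needs a cut trace inequality, not an inverse one); and on uncut cells $(\nabla p,(\QK-I)\mathbf v_h)_K$ does not vanish but reduces to an $O(h)$ term controlled by the standard stabilization.
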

\begin{proof}
	Set
	\[
	\mathcal N(\mathbf u,p)
	:=
	\|\alpha^{-1/2}\mathbf u\|_{2,\Omega_1\cup\Omega_2}
	+
	\|\alpha^{-1/2}\operatorname{div}\mathbf u\|_{1,\Omega_1\cup\Omega_2}
	+
	\|\alpha^{1/2}p\|_{1,\Omega_1\cup\Omega_2}.
	\]
	For \((\mathbf v_h,q_h)\in\mathbf V_h\times Q_h\), define
	\[
	\mathcal R_h(\mathbf v_h,q_h)
	:=
	\mathcal L_h(\mathbf u_I,p_I;\mathbf v_h,q_h)
	-
	\sum_{i=1}^2\sum_{K\in\mathcal T_{h,i}^{+}}(f,q_{h,i})_{K_i}.
	\]
	All sums over \(i,K\) below are understood as
	\[
	\sum_{i,K}
	:=
	\sum_{i=1}^2
	\sum_{K\in\mathcal T_{h,i}^{+}} .
	\]
	
	Using the continuous weak formulation, the definition of \(b_h\), and the
	interface conditions
	\[
	[\![p]\!]=0,\qquad [\![\mathbf u\cdot\mathbf n]\!]=0
	\quad\text{on }\Gamma,
	\]
	we obtain the following residual decomposition:
	\[
	\begin{aligned}
		\mathcal R_h(\mathbf v_h,q_h)
		&=
		\left[
		a_h(\mathbf u_I,\mathbf v_h)
		-
		\sum_{i,K}
		(\alpha_i^{-1}\mathbf u_i,\mathbf v_{h,i})_{K_i}
		\right]                                                \\
		&\quad
		+
		\sum_{i,K}
		(\operatorname{div}\mathbf v_{h,i},p_{I,i}-p_i)_{K_i}  \\
		&\quad
		-
		\sum_{K\in\mathcal T_h^\Gamma}
		([\![\mathcal M_\Gamma\mathbf v_h]\!],\{p_I-p\})_{\Gamma_K} \\
		&\quad
		+
		\mathcal C_\Gamma(p;\mathbf v_h)                       \\
		&\quad
		-
		\sum_{i,K}
		(\operatorname{div}(\mathbf u_{I,i}-\mathbf u_i),q_{h,i})_{K_i} \\
		&\quad
		+
		\sum_{K\in\mathcal T_h^\Gamma}
		([\![\mathcal M_\Gamma(\mathbf u_I-\mathbf u)]\!],\{q_h\})_{\Gamma_K} \\
		&\quad
		+
		\gamma_1J_1(p_I,q_h)
		+
		\gamma_2J_2(p_I,q_h),
	\end{aligned}
	\]
	where
	\[
	\mathcal C_\Gamma(p;\mathbf v_h)
	:=
	\sum_{K\in\mathcal T_h^\Gamma}
	\bigl(
	(I-\Pi_{\Gamma,K}^0)p,\,
	[\![\mathbf v_h\cdot\mathbf n]\!]
	-
	[\![\mathcal M_\Gamma\mathbf v_h]\!]
	\bigr)_{\Gamma_K}.
	\]
	Indeed, since
	\[
	[\![\mathcal M_\Gamma\mathbf v_h]\!]
	=
	\Pi_{\Gamma,K}^0[\![\mathbf v_h\cdot\mathbf n]\!],
	\]
	the difference
	\[
	[\![\mathbf v_h\cdot\mathbf n]\!]
	-
	[\![\mathcal M_\Gamma\mathbf v_h]\!]
	\]
	has zero average on \(\Gamma_K\). Hence the constant part
	\(\Pi_{\Gamma,K}^0p\) gives no contribution, and only
	\((I-\Pi_{\Gamma,K}^0)p\) remains in \(\mathcal C_\Gamma\).
	
	We now estimate the terms in this decomposition.
	
	First, by the commuting property of the \(H(\operatorname{div})\)-VEM
	interpolant,
	\[
	\operatorname{div}(\mathbf u_{I,i}-\mathbf u_i)
	=
	P_K^0(\operatorname{div}E^i_{\rm div}\mathbf u_i)
	-
	\operatorname{div}E^i_{\rm div}\mathbf u_i
	\qquad\text{on }K\in\mathcal T_{h,i}^{+}.
	\]
	Therefore, by the \(L^2\)-projection estimate,
	\[
	\left|
	\sum_{i,K}
	(\operatorname{div}(\mathbf u_{I,i}-\mathbf u_i),q_{h,i})_{K_i}
	\right|
	\le
	Ch
	\|\alpha^{-1/2}\operatorname{div}\mathbf u\|_{1,\Omega_1\cup\Omega_2}
	\|\alpha^{1/2}q_h\|_{0,\Omega_1\cup\Omega_2}.
	\]
	Moreover, using the consistency of \(\mathcal M_\Gamma\), Cauchy's
	inequality, the trace estimate for the pressure average, and Lemma~4.12,
	\[
	\left|
	\sum_{K\in\mathcal T_h^\Gamma}
	([\![\mathcal M_\Gamma(\mathbf u_I-\mathbf u)]\!],\{q_h\})_{\Gamma_K}
	\right|
	\le
	Ch
	\left(
	\|\alpha^{-1/2}\mathbf u\|_{2}
	+
	\|\alpha^{-1/2}\operatorname{div}\mathbf u\|_{1}
	\right)
	\|q_h\|_{Q,h}.
	\]
	Hence the two terms involving \(q_h\) are bounded by
	\[
	Ch\mathcal N(\mathbf u,p)\|(\mathbf v_h,q_h)\|_h.
	\]
	
	Next we estimate the pressure stabilization terms. Since \(E^ip_i\) is
	single-valued across the transmission edges associated with \(\Omega_i\),
	and since the exact pressure satisfies \([\![p]\!]=0\) on \(\Gamma\), we have
	\[
	J_1(p_I,q_h)=J_1(p_I-p,q_h),
	\qquad
	J_2(p_I,q_h)=J_2(p_I-p,q_h).
	\]
	Thus, by Cauchy's inequality and Lemma~5.1,
	\[
	|\gamma_1J_1(p_I,q_h)+\gamma_2J_2(p_I,q_h)|
	\le
	Ch\|\alpha^{1/2}p\|_{1,\Omega_1\cup\Omega_2}
	\|q_h\|_{Q,h}.
	\]
	
	We now estimate the two pressure projection terms in the first equation:
	\[
	\sum_{i,K}
	(\operatorname{div}\mathbf v_{h,i},p_{I,i}-p_i)_{K_i}
	-
	\sum_{K\in\mathcal T_h^\Gamma}
	([\![\mathcal M_\Gamma\mathbf v_h]\!],\{p_I-p\})_{\Gamma_K}.
	\]
	Since
	\[
	p_I|_K=P_K^0(E^ip_i)|_K
	\qquad\text{on }K\in\mathcal T_{h,i}^{+},
	\]
	the \(L^2\)-projection estimate gives
	\[
	\|p_I-p_i\|_{0,K_i}
	\le
	Ch_K|E^ip_i|_{1,K}.
	\]
	Together with the cut trace estimate for the interface term and the
	control of \([\![\mathcal M_\Gamma\mathbf v_h]\!]\) by the velocity norm,
	this yields
	\[
	\begin{aligned}
		&\left|
		\sum_{i,K}
		(\operatorname{div}\mathbf v_{h,i},p_{I,i}-p_i)_{K_i}
		\right|
		+
		\left|
		\sum_{K\in\mathcal T_h^\Gamma}
		([\![\mathcal M_\Gamma\mathbf v_h]\!],\{p_I-p\})_{\Gamma_K}
		\right|                                      \\
		&\qquad
		\le
		Ch\|\alpha^{1/2}p\|_{1,\Omega_1\cup\Omega_2}\|\mathbf v_h\|_h .
	\end{aligned}
	\]
	The remaining interface consistency term is exactly the quantity estimated
	in Lemma~4.11:
	\[
	|\mathcal C_\Gamma(p;\mathbf v_h)|
	\le
	Ch
	\|\alpha^{-1/2}\mathbf u\|_{2,\Omega_1\cup\Omega_2}
	\|\mathbf v_h\|_h .
	\]
	
	It remains to estimate the velocity consistency term
	\[
	a_h(\mathbf u_I,\mathbf v_h)
	-
	\sum_{i,K}
	(\alpha_i^{-1}\mathbf u_i,\mathbf v_{h,i})_{K_i}.
	\]
	We treat its components according to the definition of \(a_h\). The
	divergence ghost penalty satisfies
	\[
	|G_{\rm div}(\mathbf u_I,\mathbf v_h)|
	=
	|G_{\rm div}(\mathbf u_I-\mathbf u,\mathbf v_h)|
	\le
	Ch
	\|\alpha^{-1/2}\operatorname{div}\mathbf u\|_{1,\Omega_1\cup\Omega_2}
	\|\mathbf v_h\|_h,
	\]
	where we used the fact that \(\operatorname{div}\mathbf u\) is single-valued
	on the extended transmission edges after extension, together with the
	commuting property.
	
	For the corrected interface-flux penalty, we write
	\[
	P_{\mathcal F}(\mathbf u_I,\mathbf v_h)
	=
	P_{\mathcal F}(\mathbf u_I-\mathbf u,\mathbf v_h)
	+
	P_{\mathcal F}(\mathbf u,\mathbf v_h).
	\]
	The first term is bounded by Lemma~4.12. For the second one, the exact
	flux continuity gives
	\[
	[\![\mathcal M_\Gamma\mathbf u]\!]=0,
	\]
	and
	\[
	(I-\Pi_{\Gamma,K}^0)
	[\![\mathbf Q_K^1\mathbf u\cdot\mathbf n]\!]
	=
	(I-\Pi_{\Gamma,K}^0)
	[\![(\mathbf Q_K^1\mathbf u-\mathbf u)\cdot\mathbf n]\!].
	\]
	Thus the projection estimate for \(\mathbf Q_K^1\) and the trace inequality
	give
	\[
	|P_{\mathcal F}(\mathbf u_I,\mathbf v_h)|
	\le
	Ch
	\left(
	\|\alpha^{-1/2}\mathbf u\|_{2}
	+
	\|\alpha^{-1/2}\operatorname{div}\mathbf u\|_{1}
	\right)
	\|\mathbf v_h\|_h .
	\]
	
	It remains to consider the projected mass and the VEM stabilization parts.
	On each element \(K\), choose a polynomial
	\(\mathbf u_{\pi,i}\in[\mathbb P_1(K)]^2\) satisfying the usual
	approximation estimate. On non-cut elements, the standard VEM polynomial
	consistency and stability yield
	\[
	Ch\|\alpha^{-1/2}\mathbf u\|_{2,\Omega_1\cup\Omega_2}\|\mathbf v_h\|_h .
	\]
	On cut elements, the whole-cell orthogonality of \(\mathbf Q_K^1\) does not
	eliminate the virtual-kernel part on \(K_i\). The only nonstandard term is
	\[
	\sum_{K\in\mathcal T_h^\Gamma}\sum_{i=1}^2
	(\alpha_i^{-1}\mathbf u_{\pi,i},
	(I-\mathbf Q_K^1)\mathbf v_{h,i})_{K_i}.
	\]
	It is controlled by the cut-layer estimate and the enhanced kernel
	stabilization:
	\[
	\begin{aligned}
		&\left|
		\sum_{K\in\mathcal T_h^\Gamma}\sum_{i=1}^2
		(\alpha_i^{-1}\mathbf u_{\pi,i},
		(I-\mathbf Q_K^1)\mathbf v_{h,i})_{K_i}
		\right|                                      \\
		&\qquad
		\le
		\|\alpha^{-1/2}\mathbf u_{\pi}\|_{0,\Omega_h^\Gamma}
		\,
		\|\alpha^{-1/2}(I-\mathbf Q_K^1)\mathbf v_h\|_{0,\Omega_h^\Gamma}.
	\end{aligned}
	\]
	The strip estimate for the cut layer gives
	\[
	\|\alpha^{-1/2}\mathbf u_{\pi}\|_{0,\Omega_h^\Gamma}
	\le
	Ch^{1/2}
	\|\alpha^{-1/2}\mathbf u\|_{2,\Omega_1\cup\Omega_2},
	\]
	and the enhanced cut-element stabilization gives
	\[
	\|\alpha^{-1/2}(I-\mathbf Q_K^1)\mathbf v_h\|_{0,\Omega_h^\Gamma}
	\le
	Ch^{1/2}\|\mathbf v_h\|_h .
	\]
	Therefore the cut-cell residual is bounded by
	\[
	Ch\|\alpha^{-1/2}\mathbf u\|_{2,\Omega_1\cup\Omega_2}\|\mathbf v_h\|_h .
	\]
	Combining the non-cut and cut-element estimates gives
	\[
	\left|
	a_h(\mathbf u_I,\mathbf v_h)
	-
	\sum_{i,K}
	(\alpha_i^{-1}\mathbf u_i,\mathbf v_{h,i})_{K_i}
	\right|
	\le
	Ch
	\left(
	\|\alpha^{-1/2}\mathbf u\|_{2}
	+
	\|\alpha^{-1/2}\operatorname{div}\mathbf u\|_{1}
	\right)
	\|\mathbf v_h\|_h .
	\]
	
	Collecting all the estimates above, we obtain
	\[
	|\mathcal R_h(\mathbf v_h,q_h)|
	\le
	Ch\mathcal N(\mathbf u,p)
	\|(\mathbf v_h,q_h)\|_h .
	\]
	This proves the lemma.
\end{proof}

\begin{thm}[A priori error estimate]\label{thm final error}
	Assume the interface geometry in Assumption~\ref{ass unfitted geometry}. Let $(\mathbf u,p)$ be the exact solution of \eqref{weak formulation of mixed problem} with
	\[
	\mathbf u\in H^2(\Omega_1\cup\Omega_2)^2,
	\qquad
	\operatorname{div}\mathbf u\in H^1(\Omega_1\cup\Omega_2),
	\qquad
	p\in H^1(\Omega_1\cup\Omega_2),
	\]
	and let $(\mathbf u_h,p_h)$ be the solution of \eqref{discrete problem}. Then
	\begin{equation}\label{final error estimate}
		\Vert(\mathbf u-\mathbf u_h,p-p_h)\Vert_h
		\le Ch\left(
		\Vert\alpha^{-1/2}\mathbf u\Vert_{2,\Omega_1\cup\Omega_2}
		+\Vert\alpha^{-1/2}\operatorname{div}\mathbf u\Vert_{1,\Omega_1\cup\Omega_2}
		+\Vert\alpha^{1/2}p\Vert_{1,\Omega_1\cup\Omega_2}
		\right).
	\end{equation}
	The constant $C$ is independent of $h$ and of the position of the interface. In the present weighting it may depend on the contrast $\alpha_{\max}/\alpha_{\min}$.
\end{thm}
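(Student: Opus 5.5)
The proof follows the standard Strang-type argument: split the error through the interpolants $(\mathbf u_I,p_I)$ of \eqref{early interpolation notation}, bound the discrete part by the inf-sup condition and the consistency residual, and bound the interpolation part directly. Write
\[
\mathbf u-\mathbf u_h=(\mathbf u-\mathbf u_I)+(\mathbf u_I-\mathbf u_h),\qquad
p-p_h=(p-p_I)+(p_I-p_h),
\]
and set $\mathbf e_h=\mathbf u_I-\mathbf u_h\in\mathbf V_h$, $\epsilon_h=p_I-p_h\in Q_h$. By the triangle inequality for $\Vert(\cdot,\cdot)\Vert_h$,
\[
\Vert(\mathbf u-\mathbf u_h,p-p_h)\Vert_h\le \Vert(\mathbf u-\mathbf u_I,p-p_I)\Vert_h+\Vert(\mathbf e_h,\epsilon_h)\Vert_h .
\]
The first term is already bounded by $Ch\,\mathcal N(\mathbf u,p)$ by Theorem~\ref{interpolation error estimation}. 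Its hypotheses hold because $\jp{p}=0$ on $\Gamma$ for the exact solution.

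For the discrete error we apply Theorem~\ref{thm inf sup} to $(\mathbf e_h,\epsilon_h)$. This gives some $(\mathbf v_h,q_h)$ with
\[
C_s\Vert(\mathbf e_h,\epsilon_h)\Vert_h\,\Vert(\mathbf v_h,q_h)\Vert_h\le \mathcal L_h(\mathbf e_h,\epsilon_h;\mathbf v_h,q_h).
\]
Next we use bilinearity of $\mathcal L_h$ and the discrete problem \eqref{discrete problem}, which gives $\mathcal L_h(\mathbf u_h,p_h;\mathbf v_h,q_h)=\sum_{i,K}(f,q_{h,i})_{K_i}$. Hence
\[
\mathcal L_h(\mathbf e_h,\epsilon_h;\mathbf v_h,q_h)=\mathcal L_h(\mathbf u_I,p_I;\mathbf v_h,q_h)-\sum_{i=1}^2\sum_{K\in\mathcal T_{h,i}^{+}}(f,q_{h,i})_{K_i},
\]
which is exactly the residual estimated in Lemma~\ref{lem consistency residual}. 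It is bounded by $Ch\,\mathcal N(\mathbf u,p)\Vert(\mathbf v_h,q_h)\Vert_h$. Dividing by $\Vert(\mathbf v_h,q_h)\Vert_h$ gives $\Vert(\mathbf e_h,\epsilon_h)\Vert_h\le C_s^{-1}Ch\,\mathcal N(\mathbf u,p)$. Adding the interpolation bound yields \eqref{final error estimate}, with a constant built from $C_s$, the residual constant and the interpolation constant.

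The argument uses only results already established, so the main point to check is that every ingredient has the same uniformity. All constants must be independent of $h$ and of the cut position, and may depend only on $\alpha_{\max}/\alpha_{\min}$. We verify this by tracing the constants in Lemma~\ref{lem pressure lifting}, Lemma~\ref{lem Gker kernel control}, Lemma~\ref{lem interface projection consistency} and Lemma~\ref{lem interpolation new penalties}.

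One subtlety is the regularity assumption. Lemma~\ref{lem interface projection consistency} needs $\mathbf u=\alpha\nabla p$ with $p$ continuous across $\Gamma$, and the residual lemma needs the jump conditions $\jp{p}=0$ and $\jp{\mathbf u\cdot\mathbf n}=0$. Both hold for the exact solution of \eqref{weak formulation of mixed problem}. The assumption that $h$ is sufficiently small is inherited from Theorem~\ref{thm inf sup}.
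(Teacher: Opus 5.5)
Your proposal is correct and matches the paper's own proof. You split the error through $(\mathbf u_I,p_I)$, bound $\Vert(\mathbf u-\mathbf u_I,p-p_I)\Vert_h$ by Theorem~\ref{interpolation error estimation}, and bound $\Vert(\mathbf u_I-\mathbf u_h,p_I-p_h)\Vert_h$ via Theorem~\ref{thm inf sup}, using \eqref{discrete problem} to identify the numerator with the residual of Lemma~\ref{lem consistency residual}.
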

\begin{proof}
	By the triangle inequality,
	\[
	\Vert(\mathbf u-\mathbf u_h,p-p_h)\Vert_h
	\le \Vert(\mathbf u-\mathbf u_I,p-p_I)\Vert_h
	+\Vert(\mathbf u_I-\mathbf u_h,p_I-p_h)\Vert_h.
	\]
	The first term is bounded by Theorem~\ref{interpolation error estimation}. For the second term, the discrete inf-sup condition of Theorem~\ref{thm inf sup} gives
	\[
	\Vert(\mathbf u_I-\mathbf u_h,p_I-p_h)\Vert_h
	\le C\sup_{(\mathbf v_h,q_h)\ne0}
	\frac{\mathcal L_h(\mathbf u_I-\mathbf u_h,p_I-p_h;\mathbf v_h,q_h)}{\Vert(\mathbf v_h,q_h)\Vert_h}.
	\]
	Using the discrete problem \eqref{discrete problem}, the numerator is exactly the consistency residual in Lemma~\ref{lem consistency residual}. Hence it is bounded by $Ch\mathcal N(\mathbf u,p)$. This proves \eqref{final error estimate}.
\end{proof}

\section{Numerical experiments}\label{sec numerical}

In this section we report numerical experiments for the proposed method. The computations are performed on unfitted polygonal meshes. Unless otherwise stated, we use
\[
\gamma_1=1,\qquad \gamma_2=1,\qquad \gamma_{\mathcal F}=10,
\qquad \eta_{\rm ker}=1,
\qquad \eta_{\rm div}=0.1 .
\]
For circular interfaces the intersection points and the interface quadrature are computed on the circular arc. The reported rates are computed from two consecutive meshes.

For nonhomogeneous boundary data $p=g_D$ on $\partial\Omega$, the right-hand side of the first mixed equation is modified by adding the boundary contribution $\langle g_D,\mathbf v_h\cdot\mathbf n\rangle_{\partial\Omega}$. This is the only modification required for the examples with nonzero boundary data.

We use the following relative errors. The relative mesh-dependent error is evaluated with the true exact solution,
\begin{equation}\label{numerical mesh error definition}
	E_h^{\rm rel}
	=
	\frac{\| (\mathbf u-\mathbf u_h,p-p_h)\|_{h,{\rm ex}}}
	{\| (\mathbf u,p)\|_{h,{\rm ex}}}.
\end{equation}
Here the projected, divergence, pressure, interface-flux, and ghost-penalty components are evaluated with the same weights as in the discrete norm. The VEM stabilization part is evaluated from the scaled degrees of freedom of the exact velocity, equivalently from the canonical VEM interpolant of the exact velocity. We also report the relative projected velocity error
\begin{equation}\label{numerical projected velocity error}
	E_{u,Q}^{\rm rel}
	=
	\frac{
		\left(\sum_{i=1}^2\sum_{K\in\mathcal T_{h,i}^+}
		\|\mathbf u_i-\QK\mathbf u_{h,i}\|_{0,K_i}^2\right)^{1/2}}
	{\|\mathbf u\|_{0,\Omega_1\cup\Omega_2}},
\end{equation}
The pressure contribution is already included in $E_h^{\rm rel}$ through the pressure part of the mesh-dependent norm and the pressure stabilization terms. Therefore, in the tables below we report $E_h^{\rm rel}$ as the main mixed error and $E_{u,Q}^{\rm rel}$ as an additional measure of the accuracy of the computed flux.

\subsection{Example 1: straight interface}

Let
\[
\phi(x,y)=x-0.3,
\qquad
\Omega_1=\{x>0.3\},\qquad \Omega_2=\{x\le 0.3\}.
\]
Set
\[
a=x-0.3,
\qquad
w=a^2(1-y^2).
\]
The exact pressure and velocity are
\[
p(x,y)=
\begin{cases}
	w/\alpha_1, & \phi(x,y)>0,\\[0.3em]
	w/\alpha_2, & \phi(x,y)\le 0,
\end{cases}
\qquad
\mathbf u=\nabla w
=
\begin{pmatrix}
	2(x-0.3)(1-y^2)\\
	-2y(x-0.3)^2
\end{pmatrix}.
\]
Thus
\[
f=-\operatorname{div}\mathbf u
=2\bigl((x-0.3)^2+y^2-1\bigr),
\qquad
g_D=p|_{\partial\Omega}.
\]
The convergence results are shown in Table~\ref{tab example1}.

\begin{table}[H]
	\centering
	\caption{Example 1: straight interface with nonhomogeneous boundary data.}
	\label{tab example1}
	\small
	\setlength{\tabcolsep}{6pt}
	\renewcommand{\arraystretch}{1.18}
	\begin{tabular}{c c c c c c c}
		\hline
		$(\alpha_1,\alpha_2)$ & $N_{\rm el}$ & $h$ & $E_h^{\rm rel}$ & rate & $E_{u,Q}^{\rm rel}$ & rate \\
		\hline
		\multirow{4}{*}{$(1,10^5)$}
		& 256   & 0.182 & $8.064\mathrm e{-2}$ & --    & $6.055\mathrm e{-3}$ & -- \\
		& 1024  & 0.091 & $3.902\mathrm e{-2}$ & 1.046 & $1.471\mathrm e{-3}$ & 2.038 \\
		& 4096  & 0.045 & $1.913\mathrm e{-2}$ & 1.022 & $3.606\mathrm e{-4}$ & 2.017 \\
		& 16384 & 0.023 & $9.509\mathrm e{-3}$ & 1.019 & $8.994\mathrm e{-5}$ & 2.025 \\
		\hline
		\multirow{4}{*}{$(10^5,1)$}
		& 256   & 0.182 & $7.721\mathrm e{-2}$ & --    & $5.876\mathrm e{-3}$ & -- \\
		& 1024  & 0.091 & $3.811\mathrm e{-2}$ & 1.017 & $1.423\mathrm e{-3}$ & 2.043 \\
		& 4096  & 0.045 & $1.896\mathrm e{-2}$ & 1.001 & $3.486\mathrm e{-4}$ & 2.018 \\
		& 16384 & 0.023 & $9.466\mathrm e{-3}$ & 1.013 & $8.707\mathrm e{-5}$ & 2.022 \\
		\hline
	\end{tabular}
\end{table}

\subsection{Example 2: circular interface}

Let
\[
\phi(x,y)=x^2+y^2-r_0^2,
\qquad r_0=0.3,
\qquad r=(x^2+y^2)^{1/2}.
\]
We take
\[
\mathbf u=\nabla(r^5)=
\begin{pmatrix}5xr^3\\ 5yr^3\end{pmatrix},
\]
and
\[
p(x,y)=
\begin{cases}
	r^5/\alpha_2, & \phi(x,y)\le 0,\\[0.4em]
	r^5/\alpha_1+(\alpha_2^{-1}-\alpha_1^{-1})r_0^5, & \phi(x,y)>0.
\end{cases}
\]
Then $\jp{p}=0$ and $\jp{\mathbf u\cdot\mathbf n}=0$ on the circle $r=r_0$. Moreover
\[
f=-\operatorname{div}\mathbf u=-25r^3,
\qquad
g_D=p|_{\partial\Omega}.
\]
Table~\ref{tab example2} gives the convergence results.

\begin{table}[H]
	\centering
	\caption{Example 2: circular interface with nonhomogeneous boundary data.}
	\label{tab example2}
	\small
	\setlength{\tabcolsep}{6pt}
	\renewcommand{\arraystretch}{1.18}
	\begin{tabular}{c c c c c c c}
		\hline
		$(\alpha_1,\alpha_2)$ & $N_{\rm el}$ & $h$ & $E_h^{\rm rel}$ & rate & $E_{u,Q}^{\rm rel}$ & rate \\
		\hline
		\multirow{4}{*}{$(1,10^5)$}
		& 256   & 0.182 & $1.011\mathrm e{-1}$ & --    & $1.421\mathrm e{-2}$ & -- \\
		& 1024  & 0.091 & $5.021\mathrm e{-2}$ & 1.008 & $3.415\mathrm e{-3}$ & 2.054 \\
		& 4096  & 0.045 & $2.508\mathrm e{-2}$ & 0.996 & $8.463\mathrm e{-4}$ & 2.001 \\
		& 16384 & 0.023 & $1.255\mathrm e{-2}$ & 1.010 & $2.114\mathrm e{-4}$ & 2.022 \\
		\hline
		\multirow{4}{*}{$(10^5,1)$}
		& 256   & 0.182 & $5.347\mathrm e{-2}$ & --    & $1.660\mathrm e{-2}$ & -- \\
		& 1024  & 0.091 & $2.966\mathrm e{-2}$ & 0.849 & $4.085\mathrm e{-3}$ & 2.020 \\
		& 4096  & 0.045 & $1.435\mathrm e{-2}$ & 1.041 & $1.018\mathrm e{-3}$ & 1.993 \\
		& 16384 & 0.023 & $6.864\mathrm e{-3}$ & 1.076 & $2.543\mathrm e{-4}$ & 2.023 \\
		\hline
	\end{tabular}
\end{table}

For the pair $(\alpha_1,\alpha_2)=(10^5,1)$, the mesh-dependent error shows a short pre-asymptotic behavior on the coarsest refinement, while the last two rates are close to first order. The projected velocity error is close to second order throughout the refinements.

\subsection{Example 3: position of the interface}

This example tests the sensitivity with respect to the position of the interface relative to the background mesh. We consider a family of straight interfaces
\[
\Gamma_\xi:\quad x+\frac{\pi}{6}+\xi=0,
\qquad
\xi=10^{-1},10^{-2},\ldots,10^{-7}.
\]
Let
\[
\phi_\xi(x,y)=x+\frac{\pi}{6}+\xi,
\qquad
w_\xi=\phi_\xi(x,y)(x^2-1)(y^2-1).
\]
The exact solution is defined by
\[
p=
\begin{cases}
	w_\xi/\alpha_1, & \phi_\xi>0,\\[0.3em]
	w_\xi/\alpha_2, & \phi_\xi\le0,
\end{cases}
\qquad
\mathbf u=\nabla w_\xi,
\qquad
f=-\Delta w_\xi.
\]
Since $w_\xi=0$ on $\Gamma_\xi$ and on $\partial\Omega$, this example has $\jp{p}=0$, $\jp{\mathbf u\cdot\mathbf n}=0$, and homogeneous boundary data. The explicit velocity is
\[
\mathbf u=
\begin{pmatrix}
	(y^2-1)\left[(x^2-1)+2x(x+\frac{\pi}{6}+\xi)\right]\\[0.2em]
	2y(x+\frac{\pi}{6}+\xi)(x^2-1)
\end{pmatrix}.
\]
Let $N_{\rm cut}=\#\mathcal T_h^\Gamma$ and $\rho_{\rm min}=\min_{K\in\mathcal T_h^\Gamma,\ i=1,2}|K_i|/|K|$. We use $(\alpha_1,\alpha_2)=(10^5,1)$ on a fixed mesh with $N_{\rm el}=4096$ and $h=0.045$. Table~\ref{tab example3} shows that the errors remain essentially unchanged even when the smallest cut ratio becomes very small. This type of moving-interface test is commonly used to examine cut-position robustness of unfitted methods.

\begin{table}[H]
	\centering
	\caption{Example 3: sensitivity with respect to the interface position on a fixed mesh with $N_{\rm el}=4096$ and $h=0.045$.}
	\label{tab example3}
	\small
	\setlength{\tabcolsep}{6pt}
	\renewcommand{\arraystretch}{1.18}
	\begin{tabular}{c c c c c c}
		\hline
		$(\alpha_1,\alpha_2)$ & $\xi$ & $N_{\rm cut}$ & $\rho_{\rm min}$ & $E_h^{\rm rel}$ & $E_{u,Q}^{\rm rel}$ \\
		\hline
		\multirow{7}{*}{$(10^5,1)$}
		& $10^{-1}$ & 88 & $2.725\mathrm e{-4}$ & $2.060\mathrm e{-2}$ & $5.502\mathrm e{-4}$ \\
		& $10^{-2}$ & 78 & $6.005\mathrm e{-6}$ & $2.029\mathrm e{-2}$ & $5.901\mathrm e{-4}$ \\
		& $10^{-3}$ & 71 & $3.517\mathrm e{-5}$ & $2.015\mathrm e{-2}$ & $6.260\mathrm e{-4}$ \\
		& $10^{-4}$ & 69 & $7.859\mathrm e{-5}$ & $2.014\mathrm e{-2}$ & $6.402\mathrm e{-4}$ \\
		& $10^{-5}$ & 69 & $1.125\mathrm e{-5}$ & $2.014\mathrm e{-2}$ & $6.412\mathrm e{-4}$ \\
		& $10^{-6}$ & 69 & $7.854\mathrm e{-6}$ & $2.015\mathrm e{-2}$ & $6.413\mathrm e{-4}$ \\
		& $10^{-7}$ & 69 & $7.548\mathrm e{-6}$ & $2.015\mathrm e{-2}$ & $6.413\mathrm e{-4}$ \\
		\hline
	\end{tabular}
\end{table}

\subsection{Example 4: heart-shaped interface}

Finally we test a more complicated interface. Let
\[
\phi(x,y)=(3x^2+3y^2-x)^2-x^2-y^2+0.02.
\]
Set
\[
B(x,y)=(1-x^2)(1-y^2),
\qquad
w(x,y)=\phi(x,y)B(x,y).
\]
We define
\[
p=
\begin{cases}
	w/\alpha_1, & \phi>0,\\[0.3em]
	w/\alpha_2, & \phi\le0,
\end{cases}
\qquad
\mathbf u=\nabla w,
\qquad
f=-\Delta w.
\]
Since $B=0$ on $\partial\Omega$ and $w=0$ on $\Gamma$, the boundary condition is homogeneous and the interface conditions are satisfied. The convergence results are reported in Table~\ref{tab example4}.

\begin{table}[H]
	\centering
	\caption{Example 4: heart-shaped interface.}
	\label{tab example4}
	\small
	\setlength{\tabcolsep}{6pt}
	\renewcommand{\arraystretch}{1.18}
	\begin{tabular}{c c c c c c c}
		\hline
		$(\alpha_1,\alpha_2)$ & $N_{\rm el}$ & $h$ & $E_h^{\rm rel}$ & rate & $E_{u,Q}^{\rm rel}$ & rate \\
		\hline
		\multirow{4}{*}{$(1,10^5)$}
		& 256   & 0.182 & $1.951\mathrm e{-1}$ & --    & $6.719\mathrm e{-2}$ & -- \\
		& 1024  & 0.091 & $9.834\mathrm e{-2}$ & 0.987 & $1.687\mathrm e{-2}$ & 1.991 \\
		& 4096  & 0.045 & $4.899\mathrm e{-2}$ & 0.999 & $4.201\mathrm e{-3}$ & 1.994 \\
		& 16384 & 0.023 & $2.451\mathrm e{-2}$ & 1.010 & $1.052\mathrm e{-3}$ & 2.018 \\
		\hline
		\multirow{4}{*}{$(10^5,1)$}
		& 256   & 0.182 & $2.742\mathrm e{-1}$ & --    & $7.294\mathrm e{-2}$ & -- \\
		& 1024  & 0.091 & $1.247\mathrm e{-1}$ & 1.135 & $1.820\mathrm e{-2}$ & 2.000 \\
		& 4096  & 0.045 & $5.969\mathrm e{-2}$ & 1.063 & $4.541\mathrm e{-3}$ & 2.003 \\
		& 16384 & 0.023 & $2.816\mathrm e{-2}$ & 1.076 & $1.128\mathrm e{-3}$ & 2.011 \\
		\hline
	\end{tabular}
\end{table}

\subsection{Additional test: influence of the coefficient contrast}

Although the constants in the analysis may depend on the coefficient contrast, we also test additional coefficient pairs to examine the practical behavior of the method. The extreme pairs \((1,10^5)\) and \((10^5,1)\) have already been reported in Table~\ref{tab example2}. Table~\ref{tab contrast circle} reports
the results for the additional pairs \((1,10)\), \((10,1)\), \((1,10^3)\), and \((10^3,1)\) for the same circular interface example.

\begin{table}[H]
	\centering
	\caption{Example 2 with additional coefficient contrasts.}
	\label{tab contrast circle}
	\small
	\setlength{\tabcolsep}{5.5pt}
	\renewcommand{\arraystretch}{1.18}
	\begin{tabular}{c c c c c c c}
		\hline
		$(\alpha_1,\alpha_2)$ & $N_{\rm el}$ & $h$ 
		& $E_h^{\rm rel}$ & rate 
		& $E_{u,Q}^{\rm rel}$ & rate \\
		\hline
		\multirow{4}{*}{$(1,10)$}
		& 256   & 0.182 & $1.011\mathrm e{-1}$ & --    & $1.375\mathrm e{-2}$ & -- \\
		& 1024  & 0.091 & $5.022\mathrm e{-2}$ & 1.008 & $3.361\mathrm e{-3}$ & 2.029 \\
		& 4096  & 0.045 & $2.508\mathrm e{-2}$ & 0.996 & $8.346\mathrm e{-4}$ & 1.998 \\
		& 16384 & 0.023 & $1.255\mathrm e{-2}$ & 1.010 & $2.091\mathrm e{-4}$ & 2.018 \\
		\hline
		\multirow{4}{*}{$(10,1)$}
		& 256   & 0.182 & $1.012\mathrm e{-1}$ & --    & $1.649\mathrm e{-2}$ & -- \\
		& 1024  & 0.091 & $5.031\mathrm e{-2}$ & 1.007 & $4.076\mathrm e{-3}$ & 2.014 \\
		& 4096  & 0.045 & $2.513\mathrm e{-2}$ & 0.996 & $1.014\mathrm e{-3}$ & 1.996 \\
		& 16384 & 0.023 & $1.257\mathrm e{-2}$ & 1.010 & $2.536\mathrm e{-4}$ & 2.021 \\
		\hline
		\multirow{4}{*}{$(1,10^3)$}
		& 256   & 0.182 & $1.011\mathrm e{-1}$ & --    & $1.436\mathrm e{-2}$ & -- \\
		& 1024  & 0.091 & $5.021\mathrm e{-2}$ & 1.008 & $3.428\mathrm e{-3}$ & 2.064 \\
		& 4096  & 0.045 & $2.508\mathrm e{-2}$ & 0.996 & $8.453\mathrm e{-4}$ & 2.008 \\
		& 16384 & 0.023 & $1.255\mathrm e{-2}$ & 1.010 & $2.111\mathrm e{-4}$ & 2.023 \\
		\hline
		\multirow{4}{*}{$(10^3,1)$}
		& 256   & 0.182 & $1.149\mathrm e{-1}$ & --    & $1.656\mathrm e{-2}$ & -- \\
		& 1024  & 0.091 & $5.890\mathrm e{-2}$ & 0.963 & $4.089\mathrm e{-3}$ & 2.015 \\
		& 4096  & 0.045 & $2.914\mathrm e{-2}$ & 1.009 & $1.016\mathrm e{-3}$ & 1.997 \\
		& 16384 & 0.023 & $1.439\mathrm e{-2}$ & 1.029 & $2.541\mathrm e{-4}$ & 2.021 \\
		\hline
	\end{tabular}
\end{table}
Together with Table~\ref{tab example2}, these results show the expected convergence behavior for all tested coefficient pairs. This experiment is intended to illustrate the practical behavior of the method for several contrasts; the theoretical constants are not claimed to be independent of the coefficient contrast.

\section{Concluding remarks}

We have proposed a lowest-order \(H(\operatorname{div})\)-conforming mixed extended virtual element method for elliptic interface problems on unfitted
polygonal meshes. The method uses a whole-cell polynomial projection, an interface normal-flux average, a corrected interface-flux penalty, and a local divergence ghost stabilization. These ingredients allow one to keep the method computable on polygonal meshes and to control the virtual-kernel consistency terms on cut elements.

We proved a discrete inf-sup condition and an optimal first-order error estimate in a mesh-dependent norm. The constants are independent of the mesh size and of the position of the interface relative to the background mesh, although they may depend on the coefficient contrast. The numerical results confirm the predicted convergence rates and show stable behavior when the interface cuts the mesh in unfavorable positions.

\end{document}